\newlength{\defbaselineskip} \setlength{\defbaselineskip}{\baselineskip}
\newcounter{wsk}
\newcounter{wskk}
\theoremstyle{plain}
\newtheorem{pr}{Algorithm}
\theoremstyle{definition} 
\theoremstyle{definition}  %
\numberwithin{equation}{section}
\DeclareMathOperator{\ini}{in}
\def\ob{\begin{obs}}
\def\kob{\end{obs}}
\def\dow{\begin{proof}}
\def\kdow{\end{proof}}
\def\tw{\begin{thm}}
\def\ktw{\end{thm}}
\def\hip{\begin{con}}
\def\khip{\end{con}}
\def\lem{\begin{lema}}
\def\klem{\end{lema}}
\def\ex{\begin{exm}}
\def\prog{\begin{pr}}
\def\kprog{\end{pr}}
\def\wn{\begin{cor}}
\def\kwn{\end{cor}}{}
\def\uwa{\begin{rem}}
\def\kuwa{\end{rem}}
\def\kex{\end{exm}}
\def\dfi{\begin{df}}
\def\kdfi{\end{df}}
\def\Tm{{\mathcal T}}
\definecolor{zielony}{rgb}{0.2, 0.5, 0}
\definecolor{czerwony}{rgb}{0.9, 0.2, 0.1}
\definecolor{brazowy}{rgb}{0.5, 0.1, 0.0}
\definecolor{niebieski}{rgb}{0.3, 0.1, 0.9}
\newcommand{\red}[1]{\color{czerwony} #1 \color{black}}
\newenvironment{redd}{\red}{}
\newcommand{\bred}{\begin{redd}}
\newcommand{\ered}{\end{redd}}
\newtheorem{theorem}{Theorem}[section]
\newtheorem*{theorem*}{Theorem}
\newtheorem{lemma}[theorem]{Lemma}
\newtheorem{corollary}[theorem]{Corollary}
\newtheorem{proposition}[theorem]{Proposition}
\newtheorem{con}[theorem]{Conjecture}
\theoremstyle{definition}
\newtheorem*{definition*}{Definition}
\newtheorem{remark}[theorem]{Remark}
\newtheorem{example}[theorem]{Example}
\newtheorem*{notation*}{Notation}
\newcommand{\Zz}{\mathbb{Z}}
\newcommand{\Rr}{\mathbb{R}}
\newcommand{\RR}{\mathbb{R}}
\DeclareMathOperator{\conv}{conv}
\begin{document}
\title[Symmetric edge polytopes]{Arithmetic aspects of symmetric edge polytopes}

\author[A.~Higashitani]{Akihiro Higashitani}
\address{Department of Mathematics\\Graduate School of Science\\Kyoto Sangyo University\\Kamigamo Motoyama\\Kita-ku\\Kyoto\\603-8555\\Japan}
\email{ahigashi@cc.kyoto-su.ac.jp}
\author[K.~Jochemko]{Katharina Jochemko}
\address{Department of Mathematics, Royal Institute of Technology (KTH), SE-100 44 Stockholm, Sweden}
\email{jochemko@kth.se}
\author[M.~Micha{\l}ek]{Mateusz Micha{\l}ek}
 \address{Mateusz Micha{\l}ek\\
 Max Planck Institute for Mathematics in the Sciences, Inselstr. 22\\ 04103 Leipzig, Germany\\
 and
 Institute of Mathematics of the
 Polish Academy of Sciences\\
 ul. \'Sniadeckich 8\\
 00-656 Warszawa, Poland}
 \email{Mateusz.Michalek@mis.mpg.de}

\keywords{Symmetric edge polytope; $h^*$-polynomial; real roots; complete bipartite graph; unimodular triangulation}
\subjclass[2010]{05A15, 52B12 (primary); 13P10, 26C10, 52B15, 52B20 (secondary)}



\selectlanguage{british}
\begin{abstract}
We investigate arithmetic, geometric and combinatorial properties of symmetric edge polytopes. We give a complete combinatorial description of their facets. By combining Gr\"obner basis techniques, half-open decompositions and methods for interlacings polynomials we provide an explicit formula for the $h^\ast$-polynomial in case of complete bipartite graphs. In particular, we show that the $h^\ast$-polynomial is $\gamma$-positive and real-rooted. This proves Gal's conjecture for arbitrary flag unimodular triangulations in this case, and, beyond that, we prove a strengthing due to Nevo and Petersen (2011).
\end{abstract}
\selectlanguage{british}
\maketitle

\section{Introduction}
The investigation of graphs in combination with polytopes has a long tradition. 
Graphs arising from polytopes are a classical, intensively studied topic in discrete geometry and optimization and remain an active area of research (see, e.g.,~\cite{blind1987puzzles,Dantzig,Santos,Steinitz}). On the other hand, a variety of polytope constructions arising from graphs have led to interesting examples and new insights in combinatorics, graph theory, geometry and algebra (see, e.g.,~\cite{deza2009geometry,lovasz1972normal}). 
An important class of examples constitute~\textbf{edge polytopes} which were introduced by Hibi and Ohsugi \cite{ohsugi1998normal}. Of current particular interest due to their intimate relation to matroid polytopes and generalized permutahedra are root polytopes, a reincarnation of edge polytopes for bipartite graphs, introduced by Postnikov~\cite{postnikov2009permutohedra}. For further reading on edge polytopes we refer to~\cite{Dupont,hibi2013separating,Tran}. 

The focus of the present article is \textbf{symmetric edge polytopes}, a symmetrized version of edge polytopes that were introduced in \cite{MHNOH}. We study fundamental geometric and arithmetic properties of symmetric edge polytopes by combining geometric, algebraic, combinatorial and analytic methods of recent special interest: \textbf{half-open decomposition}, \textbf{Gr\"obner bases} and \textbf{interlacing polynomials}. For a simple graph $G$, that is, without loops or multiple edges, with vertex set $V$ and edge set $E$, the symmetric edge polytope is defined as 
\[
P_G:=\mathrm{conv}(e_v-e_w, e_w-e_v : vw \in E) \subset \Rr^V
\]
where $e_v \in \Rr^V$ is the unit vector indexed by the vertex $v$ of $G$. By definition, $P_G$ is a centrally symmetric \textbf{lattice polytope}. Moreover, $P_G$ belongs to the class of \textbf{reflexive} and \textbf{terminal} polytopes~\cite{Higashitani2015} that play a prominent role in algebraic geometry via mirror symmetry~\cite{Batyrev}. A fundamental arithmetic invariant of a lattice polytope is the \textbf{Ehrhart polynomial} which encodes the number of lattice points in its integer dilates~\cite{ehrhartRational}. A fundamental question in Ehrhart theory is to characterize Ehrhart polynomials. Of current particular interest are roots of Ehrhart polynomials and their closely related $h^\ast$-polynomials. Using orthogonal polynomial techniques, it was recently proved that the Ehrhart polynomial of the symmetric edge polytope for complete bipartite graphs $K_{a,b}$ exhibits behavior similar to the Riemann $\zeta$-function whenever $a\leq 3$~\cite{Higashitani2017interlacing} extending investigations initiated by Bump et. al.~\cite{BCKV}. More precisely, all roots lie on the complex line $\{z\in \mathbb{C}\colon \Re (z) =-\frac{1}{2}\}$, where $\Re(z)$ denotes the real part of $z$, and moreover the roots interlace on that line. Interlacing polynomials currently receive considerable attention due to their significance in a recent proof of the Kadison-Singer Problem~\cite{intII}. In the light of Stanley's Unimodality conjecture \cite{stanley1989log} of current great interest in Ehrhart theory are real-rooted $h^\ast$-polynomials and here recent success was made using interlacing polynomials techniques~\cite{beck2016h,jochemko2016real,savage2015,solus2017simplices}. Combining algebraic, geometric and combinatorial counting arguments, we obtain our first main theorem --- the following simple description for $h^\ast$-polynomials of symmetric edge polytopes of complete bipartite graphs.

\begin{theorem*}
For all $a,b\geq 0$ let $h^\ast _{a,b}(t)$ denote the $h^\ast$-polynomial of $P_{K_{a+1,b+1}}$. Then
\begin{equation*}\label{eq:K_ab1}
h^\ast _{a,b}(t) \ = \ \sum_{i=0}^{\min(a,b)}\binom{2i}{i}\binom{a}{i}\binom{b}{i}t^i(1+t)^{a+b+1-2i}\, .
\end{equation*}
\end{theorem*}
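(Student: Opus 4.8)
The plan is to compute the $h^\ast$-polynomial of $P_{K_{a+1,b+1}}$ via a unimodular triangulation obtained from a Gröbner basis of the toric ideal, and to organize the resulting count using a half-open decomposition. The key geometric fact is that, for complete bipartite graphs, the symmetric edge polytope $P_{K_{a+1,b+1}}$ is a root polytope whose vertices are the $\pm(e_u - e_v)$ with $u$ in one part and $v$ in the other; after picking a suitable term order, the initial ideal is squarefree, so the corresponding triangulation $\mathcal{T}$ is unimodular, and hence $h^\ast_{a,b}(t) = h(\mathcal{T},t)$, the $h$-polynomial of the (boundary-free, i.e.\ ``closed'') simplicial complex, computed from the combinatorics of which sets of edges of $K_{a+1,b+1}$ form faces. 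Concretely, I expect the faces of $\mathcal{T}$ to be indexed by the forests in $K_{a+1,b+1}$ together with a compatible orientation/sign pattern, with the non-faces being exactly the circuits (even cycles) carrying the ``wrong'' sign pattern together with a chosen diagonal term.

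**Key steps.** First I would fix the term order and identify the reduced Gröbner basis of the toric ideal $I_{P_{K_{a+1,b+1}}}$ explicitly; the binomials correspond to cycles in $K_{a+1,b+1}$, and the initial terms single out, for each $2k$-cycle, the product of the $k$ alternating edges of one parity. Second, I would read off the Stanley--Reisner complex $\mathcal{T}$: its faces are the sets of (signed) edges containing no marked circuit, which I expect to match ``partial orientations of forests'' in $K_{a+1,b+1}$. Third, I would set up a half-open decomposition of $P_{K_{a+1,b+1}}$ along $\mathcal{T}$, so that $h^\ast_{a,b}(t)$ is a sum over facets of $\mathcal{T}$ of $t^{(\text{number of half-open facets below it})}$; equivalently I would directly compute $\sum_{F \in \mathcal{T}} t^{|F|}(1-t)^{\dim P - |F|}$-type expansion but track it through the shelling/half-open data to get the stated closed form. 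Fourth — and this is the combinatorial heart — I would show the generating function for faces of $\mathcal{T}$ by size, suitably weighted, equals $\sum_{i}\binom{2i}{i}\binom{a}{i}\binom{b}{i}t^i(1+t)^{a+b+1-2i}$: the index $i$ counts a maximal ``matched'' part of size $i$ on each side (hence $\binom{a}{i}\binom{b}{i}$ for the choice of vertices, $\binom{2i}{i}$ for the alternating/sign data on the $i$ doubled edges), and the factor $(1+t)^{a+b+1-2i}$ accounts for the remaining ``free'' tree edges each contributing a $(1+t)$.

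**Interlacing and real-rootedness as a corollary.** Once the formula is established, $\gamma$-positivity is immediate by inspection — it is literally a nonnegative combination of $t^i(1+t)^{a+b+1-2i}$, which is the defining form of $\gamma$-positivity — and real-rootedness I would get by exhibiting the family $\{h^\ast_{a,b}\}$ as an interlacing sequence: set up recursions in $a$ (or $b$) relating $h^\ast_{a,b}$, $h^\ast_{a-1,b}$, $h^\ast_{a-1,b-1}$ coming from deleting a vertex of $K_{a+1,b+1}$ and from the half-open decomposition, then verify the interlacing hypotheses of the standard ``compatible/interlacing polynomials'' lemma so that nonnegative combinations preserve interlacing. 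This simultaneously yields Gal's conjecture for the flag unimodular triangulation $\mathcal{T}$ and the Nevo--Petersen strengthening.

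**Main obstacle.** The hard part will be Step 4: translating the Stanley--Reisner complex of the Gröbner initial ideal into the clean binomial sum. The subtlety is the doubled edges — in the symmetric edge polytope, for a bipartite graph both $e_u-e_v$ and $e_v-e_u$ are vertices, so a face of $\mathcal{T}$ is not just a forest but a forest-with-sign-data, and one must carefully see why a connected component that is ``saturated'' by $2i$ alternating doubled edges contributes the $\binom{2i}{i}$ central binomial coefficient (e.g.\ via a Lindström--Gessel--Viennot or lattice-path interpretation of the allowed sign patterns on an even cycle) while genuine tree edges each contribute a symmetric $(1+t)$. Getting the bookkeeping of which circuits are ``forbidden'' versus which are ``resolved by the triangulation'' exactly right, and matching it to $\binom{2i}{i}\binom{a}{i}\binom{b}{i}$, is where the real work lies; everything after that is formal.
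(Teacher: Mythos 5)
Your skeleton coincides with the paper's: take the known squarefree Gr\"obner basis for the toric ideal of $P_{K_{a+1,b+1}}$ (Theorem~\ref{thm:groebnerKab}), obtain the induced flag unimodular triangulation, and evaluate $h^\ast_{a,b}$ through a half-open decomposition (Proposition~\ref{prop:halfopendecomp}). The genuine gap is your Step~4, which you yourself flag but do not carry out, and the mechanism you guess for it is not the one that works. The half-open statistic is a statistic on \emph{maximal} cells only, not a weighted face-generating function of the complex (your alternative suggestion of a $\sum_F t^{|F|}(1-t)^{\dim P-|F|}$ expansion is a different and much messier computation). In the paper the maximal cells are identified with directed spanning trees $T=(T^\uparrow,T^\downarrow)$ satisfying precise planarity and intersection conditions (Proposition~\ref{prop:treedescription}); for an explicit generic point $q$ with rapidly decreasing coordinates one computes the facet-defining functionals of the codimension-one faces through the origin and shows that the number of visible facets of the simplex of $T$ equals the number of ``ingoing'' edges of $T$ (Proposition~\ref{prop:ingoing}). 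None of this visibility analysis appears in your outline, and without it the half-open decomposition gives no formula at all.

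Moreover, the identification with the stated closed form does not happen the way you predict, i.e.\ with $\binom{2i}{i}$ arising as sign data on a matched set of doubled edges inside a single face. Instead, the ingoing-edge statistic depends only on $|A^\downarrow|$ and $|B^\uparrow|$ (Lemma~\ref{lem:ingoingnumbers}); classifying the trees into three types and using the count $\binom{a+b}{a}$ of planar spanning trees (Corollary~\ref{cor:spanning}) yields the double sum $\sum_{i,j}\binom{a}{i}\binom{b}{j}\binom{a-i+j}{j}\binom{b+i-j}{i}\left(t^{i+j}+t^{i+j+1}\right)$, and a separate double-counting identity via ``good colorings'' (Proposition~\ref{prop:K_ab2}) is then required to convert this into $\sum_i\binom{2i}{i}\binom{a}{i}\binom{b}{i}t^i(1+t)^{a+b+1-2i}$. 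So the combinatorial heart consists of three missing ingredients: the tree characterization, the visibility/ingoing-edge computation, and the colorings identity. (Your closing remarks are fine in outline, but note the paper proves real-rootedness and interlacing via a multiplier-sequence argument applied to the $\gamma$-polynomials, not via a deletion recursion.)
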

Our result generalizes \cite[Proposition 4.4]{Higashitani2017interlacing}, where the case $\min(a,b) \leq 3$ was studied. 
From the formula it is apparent that the $h^\ast$-polynomial is palindromic, that is, $t^{a+b+1}h^\ast _{a,b}(\frac{1}{t})=h^\ast _{a,b}(t)$ which reflects that $P_{K_{a+1,b+1}}$ is reflexive by a famous theorem of Hibi~\cite{Hibidual}. Stronger, our theorem shows that $h^\ast _{a,b}(t)$ is $\gamma$-positive and therefore unimodal. Since every triangulation of the symmetric edge polytope using all lattice points is unimodular~\cite{MHNOH}, the $h^\ast$-polynomial agrees with the $h$-polynomial of any such triangulation. This directly relates to open question in topological combinatorics: since $h^\ast _{a,b}(t)$ is $\gamma$-positive, we answer \textit{Gal's conjecture} \cite[Conjecture 2.1.7]{gal2005real} in the affermative for all flag triangulations of $\partial P_{K_{a+1,b+1}}$, that is, in particular, for the triangulation obtained from the explicit description of the Gr\"obner basis (Theorem~\ref{thm:groebnerKab}). In~\cite{nevo2011gamma} Nevo and Petersen moreover conjecture that the $\gamma$-vector of any flag simplicial sphere is the $f$-vector of a balanced simplicial complex, or equivalently, that its entries satisfy the Frankl--F\"uredi--Kalai inequalities. In Section~\ref{sec:Nevo} we confirm that for all flag unimodular triangulations of $\partial P_{K_{a+1,b+1}}$ by giving an explicit construction of a corresponding simplicial complex. Using $\gamma$-positivity and a classical result of Poly\'a and Schur~\cite{polya1914zwei} we furthermore prove our second main theorem; $h^\ast _{a,b}(t)$ has only real roots and the following interlacing property.
\begin{theorem*}\label{thm:main2}
For all $a,b\geq 0$ the polynomial $h^\ast _{a,b}(t)$ has only real roots and
\[
h^\ast _{a,b-1}(t) \preceq h^\ast_{a,b}(t) \, .
\]
That is, the polynomial $h^\ast _{a,b-1}(t)$ interlaces $h^\ast_{a,b}(t)$.
\end{theorem*}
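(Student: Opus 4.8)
The plan is to pass to the \emph{$\gamma$-polynomial}. Set $n=a+b+1$ and
\[
g_{a,b}(y)\ :=\ \sum_{i=0}^{\min(a,b)}\binom{2i}{i}\binom{a}{i}\binom{b}{i}\,y^i ,
\]
so that the first main theorem says precisely $h^\ast_{a,b}(t)=(1+t)^n\,g_{a,b}\!\bigl(\tfrac{t}{(1+t)^2}\bigr)$. The substitution $t\mapsto\tfrac{t}{(1+t)^2}$ maps $(-1,0)$ increasingly and $(-\infty,-1)$ decreasingly, each bijectively onto $(-\infty,0)$, and a point $y_0<0$ has exactly the two real preimages solving $y_0(1+t)^2=t$ (discriminant $1-4y_0>0$), one in each interval. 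First I would record the resulting dictionary for roots: the multiset of roots of $h^\ast_{a,b}$ consists of $-1$ with multiplicity $n-2\min(a,b)=|a-b|+1$ together with the two preimages of each root of $g_{a,b}$ (with multiplicity). Since $g_{a,b}$ has positive coefficients, this shows that $h^\ast_{a,b}$ is real-rooted iff $g_{a,b}$ is (its roots then being negative), and --- by tracking which branch of the substitution is order-preserving, which is order-reversing, and how the clusters at $-1$ line up --- that $g_{a,b-1}\preceq g_{a,b}$ implies $h^\ast_{a,b-1}\preceq h^\ast_{a,b}$.

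Everything then reduces to two statements about $g_{a,b}$, and both will come from the single observation that $\bigl(\binom{2i}{i}\binom{a}{i}\bigr)_{i\ge 0}$ is a multiplier sequence in the sense of Poly\'a and Schur. To establish that, I would first show that $\Gamma':=\bigl(\binom{2i}{i}/i!\bigr)_{i\ge 0}$ is a multiplier sequence: its Poly\'a--Schur generating function is
\[
\sum_{i\ge 0}\frac{\binom{2i}{i}}{i!}\cdot\frac{x^i}{i!}\ =\ \sum_{i\ge 0}\frac{(2i)!}{(i!)^4}x^i\ =\ \Bigl(\sum_{i\ge 0}\frac{x^i}{(i!)^2}\Bigr)^{\!2}\ =\ I_0(2\sqrt{x})^2
\]
by the Vandermonde identity $\sum_i\binom{n}{i}^2=\binom{2n}{n}$, and this entire function lies in the Laguerre--Poly\'a class because $I_0(2\sqrt{x})=\sum_i x^i/(i!)^2$ has only real negative zeros (the zeros of $J_0$ being real). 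Applying $\Gamma'$ to the real-rooted polynomial $(1+x)^a$ yields $\sum_i\binom{2i}{i}\binom{a}{i}\tfrac{x^i}{i!}$, a polynomial with only real, non-positive zeros, hence again in the Laguerre--Poly\'a class; so by Poly\'a--Schur once more, $\bigl(\binom{2i}{i}\binom{a}{i}\bigr)_{i\ge 0}$ is a multiplier sequence.

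Now the two statements about $g_{a,b}$ drop out. For real-rootedness: $g_{a,b}(y)=\sum_i\binom{2i}{i}\binom{a}{i}\binom{b}{i}y^i$ is the image of $(1+y)^b$ under this multiplier sequence, hence real-rooted, hence (by the first paragraph) so is $h^\ast_{a,b}$. For the interlacing: for all $\mu,\lambda\in\mathbb{R}$ the polynomial
\[
\mu\,g_{a,b-1}(y)+\lambda\,g_{a,b}(y)\ =\ \sum_i\binom{2i}{i}\binom{a}{i}\Bigl(\mu\binom{b-1}{i}+\lambda\binom{b}{i}\Bigr)y^i
\]
is the image under the same multiplier sequence of $\mu(1+y)^{b-1}+\lambda(1+y)^b=(1+y)^{b-1}(\mu+\lambda+\lambda y)$, which is real-rooted; so the whole pencil spanned by $g_{a,b-1}$ and $g_{a,b}$ is hyperbolic, and by Obreschkoff's theorem these two polynomials interlace. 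The direction of the interlacing I would fix by a single Wronskian evaluation at $y=0$: there $g_{a,b}(0)=g_{a,b-1}(0)=1$, $g_{a,b}'(0)=2ab$ and $g_{a,b-1}'(0)=2a(b-1)$, so $g_{a,b}'\,g_{a,b-1}-g_{a,b}\,g_{a,b-1}'$ equals $2a\ge 0$ there, which (together with interlacing) forces $g_{a,b-1}\preceq g_{a,b}$; the case $a=0$ is immediate since then $g_{a,b}\equiv 1$. Feeding this back into the dictionary of the first paragraph completes the proof.

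The crux is the assertion that $\Gamma'$ --- equivalently $\bigl(\binom{2i}{i}\binom{a}{i}\bigr)_i$ --- is a multiplier sequence, i.e.\ the Laguerre--Poly\'a membership of $I_0(2\sqrt{x})^2$; note in particular that $\bigl(\binom{2i}{i}\bigr)_i$ by itself is \emph{not} a multiplier sequence, so the factor $\binom{a}{i}$ (hence the factorial) is genuinely needed. The other point requiring care is the precise passage of roots and multiplicities (including, if present, repeated roots of $g_{a,b}$, and the clusters at $-1$) through $t\mapsto t/(1+t)^2$, together with the small cases $b\le 1$ and $a=0$, which I would check by hand.
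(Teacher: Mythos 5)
Your proposal is correct and follows essentially the same route as the paper: pass to the $\gamma$-polynomial $\sum_i\binom{2i}{i}\binom{a}{i}\binom{b}{i}t^i$, show that $\bigl(\binom{2i}{i}\binom{a}{i}\bigr)_{i\geq 0}$ is a multiplier sequence via P\'olya--Schur, deduce real-rootedness and $\gamma_{a,b-1}\preceq\gamma_{a,b}$, and transfer this to $h^\ast_{a,b-1}\preceq h^\ast_{a,b}$ by tracking roots and multiplicities through $t\mapsto t/(1+t)^2$ --- this last transfer is exactly the content of Lemma~\ref{lem:interlacinggamma}, which carries out the case analysis at $-1$ that you only sketch. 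The local differences are minor: you prove the multiplier-sequence property of $\binom{2i}{i}/i!$ directly via the Laguerre--P\'olya membership of $I_0(2\sqrt{x})^2$ (the paper cites Craven--Csordas), and you fix the direction of interlacing by Obreschkoff's theorem plus a Wronskian sign at $0$, where the paper instead uses that real-rootedness-preserving operators preserve interlacing (Corollary~\ref{cor:preserving interlacing}), which yields the direction at once.
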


The outline of the paper is as follows: In Section~\ref{sec:prelim} we provide preliminaries and notation of our main objects. In Theorem~\ref{thm:facets} we give a combinatorial description 
of all facet defining hyperplanes of $P_G$. From that we derive a simple counting formula for the number of facets in case of bipartite (Proposition~\ref{prop:facetsbipartite}) and, more generally, multipartite graphs (Proposition \ref{prop:facetskpartite}) which is exponential in the number of vertices. In Section~\ref{sec:Groebner} we give a combinatorial description of a Gr\"obner basis of $P_G$. Section~\ref{sec:bipartite} is devoted to complete bipartite graphs collecting all ingredients of the proofs of our two main theorems. In Section~\ref{sec:goodcolorings} we provide an alternative, combinatorial formula of $h^\ast _{a,b}(t)$ by careful double counting. In Section~\ref{sec:half} we construct a half-open triangulation and obtain a graph theoretical description of $h^\ast _{a,b}(t)$. Section~\ref{sec:proof} is devoted to proving the first main theorem (Theorem \ref{thm:main1}). 
In Section~\ref{sec:roots} we study the roots of $h^\ast _{a,b}(t)$ and prove the second main theorem (Theorem \ref{thm:main2}). In Section \ref{sec:Nevo} we prove Conjecture~\ref{conj:nevo} for triangulations of $\partial P_{K_{a,b}}$. We conclude with a general recursive formula for $h^\ast _{a,b}(t)$ in Section~\ref{sec:recursion} .

\section{Preliminaries}\label{sec:prelim}

In the sequel we collect preliminaries necessary for the following sections. We assume basic knowledge of polyhedral geometry and commutative algebra. For further reading we recommend~\cite{BeckRobins,Braenden,MTor,Stks,ziegler}. 

\subsection{Lattice polytopes} 
A \textbf{lattice polytope} is the convex hull of finitely many elements in a lattice contained in $\mathbb{R}^d$, typically $\mathbb{Z}^d$. A lattice polytope $P$ is called \textbf{reflexive} if
$$P^\vee := \{ {\mathbf u} \in \Rr^d : \langle {\mathbf u}, {\mathbf v}\rangle \geq -1 \text{ for any }{\mathbf v} \in P\}$$
is also a lattice polytope, where $\langle \cdot, \cdot \rangle$ denotes the usual inner product of $\Rr^d$. It is called \textbf{terminal} if all lattice points on the boundary of $P$ are vertices. In particular, the only lattice points that are contained in a terminal reflexive lattice polytope are its vertices and the origin. 

By a theorem of Ehrhart~\cite{ehrhartRational}, $|nP\cap \mathbb{Z}^d|$ is given by a polynomial $E_P(n)$ of degree $\dim P$ in $n$ for all integers $n\geq 0$, the \textbf{Ehrhart polynomial}. The \textbf{$h^\ast$-polynomial} $h^\ast _P (t)=h_0^\ast +h_1 ^\ast t+\cdots +h_dt^d$ of a $d$-dimensional lattice polytope $P$ encodes the Ehrhart polynomial in a particular basis consisting of binomial coefficients:
\[
E_P (n)=h_0 ^\ast {n+d\choose d}+h_1 ^\ast {n+d-1\choose d} +\cdots + h_d^\ast {n\choose d} \, .
\]
A fundamental theorem of Stanley \cite{stanleydecomp} states that the coefficients of the $h^\ast$-polynomial are always nonnegative integers. It was proved by Hibi~\cite{Hibidual} that a $d$-dimensional lattice polytope $P$ is reflexive if and only if its $h^\ast$-polynomial is palindromic, that is, $h^\ast _P (t)=t^dh^\ast _P \left(\frac{1}{t}\right)$.

\subsection{Triangulations and Gr\"obner bases}
A \textbf{triangulation} $\mathcal{T}$ of dimension $d$ is a subdivision into simplices of dimension at most $d$. 
The triangulation $\mathcal{T}$ is \textbf{flag} if every minimal non-face of $\mathcal{T}$ is $1$-dimensional. If $\mathcal{T}$ has vertex set $V$, then $\mathcal{T}$ is \textbf{balanced} if there is a proper coloring of its vertices $c:V \rightarrow [d+1]$, i.e., for every face $F \in \mathcal{T}$, the restriction of $c$ into $F$ is injective. 
The \textbf{$f$-polynomial} $f(t)=f_{-1}+f_0t+\cdots +f_dt^{d+1}$ encodes the numbers of faces in all dimensions: $f_i=|\{\Delta \in \mathcal{T}\colon \dim \Delta =i\}|$, where we let $f_{-1}:=1$. The \textbf{$h$-polynomial} $h(t)=\sum_{i=0}^{d+1} h_it^i$ is given via the following relation:
\[
f(t)=\sum _{i=0}^{d+1} h_i t^i(1+t)^{d+1-i}. 
\]

Note that $h_{d+1}=0$ if the geometric realization of $\mathcal{T}$ is homeomorphic to the $d$-dimensional ball, so in our case we always have $h_{d+1}=0$.  
A $d$-dimensional lattice simplex is called \textbf{unimodular} if its vertices affinely span the integer lattice $\mathbb{Z}^d$. A triangulation of a lattice polytope into unimodular simplices is called a \textbf{unimodular triangulation}. If $\mathcal{T}$ is a unimodular triangulation of a lattice polytope $P$ then its $h$-polynomial equals the $h^\ast$-polynomial $h^\ast_P(t)$ of $P$, and moreover, if $P$ is reflexive, then it is equal to the $h$-polynomial of the induced unimodular triangulation of the boundary.

An important tool to calculate triangulations are Gr\"obner bases. Let $K$ be a field and let $K[t_1^\pm, \ldots,t_d^\pm,s]$ denote the ring of Laurent polynomials in $(d+1)$ variables. Let $P$ be a fixed lattice polytope in $\mathbb{R}^d$. For any lattice point $\alpha =( \alpha_1,\ldots,\alpha_d) \in P \cap \Zz^d$, 
let $u_\alpha$ be the Laurent monomial $t_1^{\alpha_1}\cdots t_d^{\alpha_d} \in K[t_1^\pm, \ldots,t_d^\pm,s]$. The \textbf{toric ring} $K[P]$ of $P$ is the subring of $K[t_1^\pm,\ldots,t_d^\pm,s]$ generated by those monomials $u_\alpha s$ with $\alpha \in P \cap \Zz^d$. Let $S=K[x_\alpha : \alpha \in P \cap \Zz^d]$ be the polynomial ring with $|P \cap \Zz^d|$ variables and $\deg(x_\alpha)=1$. 
Then $\pi : S \rightarrow K[P], \pi(x_\alpha)=u_\alpha s$ defines a surjective ring homomorphism. The kernel of $\pi$ is called the \textbf{toric ideal} of $P$ and is denoted by $I_P$. 

A total order $<$ on the monomials of a polynomial ring is called a \textbf{monomial order} if for all monomials $a,b,c$, one has $ac<bc$ whenever $a<b$ and $1<a$ for all non-constant monomials. An important example is the \textbf{degree reverse lexicographic order $<_\text{rev}$ (degrevlex)}. Here, for two monomials $\prod x_i^{a_i}$ and $\prod x_i^{b_i}$, $\prod x_i^{a_i} <_\text{rev} \prod x_i^{b_i}$ holds with respect to the degree reverse lexicographic order $<_\text{rev}$ induced by the ordering $x_1 <_\text{rev} x_2 <_\text{rev} \cdots $ of variables if and only if $\sum a_i < \sum b_i$, or $\sum a_i = \sum b_i$ and $a_j > b_j$ for $j=\min\{i : a_i \neq b_i\}$.
The \textbf{initial term} of a polynomial $f$ is the largest monomial that appears in $f$ and is denoted by $\ini_< (f)$. The ideal generated by all initial terms of an ideal $I$ is called the \textbf{initial ideal} of $I$ 
and denoted by $\ini _< (I)$. A system of generators $g_1,\ldots,g_m$ of an ideal is called a \textbf{Gr\"obner basis} if the initial terms of the generators already generate the initial ideal of $I$, that is, if $\ini_< (I) =\langle \ini _< (g_1),\ldots,\ini_<(g_m) \rangle$. 

Consider any set of polynomials $F=\{f_1,\dots,f_k\}$ with leading monomials $m_1,\dots,m_k$. Suppose the coefficient of $m_i$ in $f_i$ equals one. We say that a polynomial $P$ may be \textbf{reduced} using $F$, if some monomial $m$ in the support of $P$ is divisible by one of the $m_i$'s, say $m=m_i\cdot m'$. A \textbf{reduction} of $P$ is a polynomial $P'$ with $m$ replaced by $m'(m_i-f_i)$. As every monomial order is a well-ordering, every reduction process must terminate, possibly with zero. A set of polynomials of an ideal $I$ is a Gr\"obner basis if and only if any polynomial $P\in I$ may be reduced to $0$, or equivalently if $P\neq 0$ then $P$ may be reduced.

For a toric ideal $I_P$ of a lattice polytope $P$, let $\Delta_P$ be the collection of subsets $S \subset P \cap \Zz^d$ such that $\mathrm{conv}(S)$ is a simplex and $\prod_{\alpha \in S}x_\alpha \not\in \sqrt{\ini_<(I_P)}$.
Then $\Delta_P$ defines a regular triangulation of $P$ with the vertex set $P \cap \Zz^d$. Moreover, the triangulation $\Delta_P$ is unimodular 
if and only if $\ini_<(I)=\sqrt{\ini_<(I)}$ (see, e.g., \cite[Corollary 8.9]{Stks}). 
In other words, $P$ has a regular unimodular triangulation if and only if $I_P$ has a squarefree Gr\"obner basis, 
where a Gr\"obner basis $g_1,\ldots,g_m$ is said to be \textbf{squarefree} if all its initial terms $\ini _< (g_1),\ldots,\ini_<(g_m)$ are squarefree.

\subsection{Half-open decompositions}
Let $P$ be a full-dimensional polytope with facets $F_1,\ldots, F_m$ and let $q$ be in \textbf{general} position, that is, $q$ is not contained in any facet defining hyperplane for all $i$. A facet $F_i$ of $P$ is \textbf{visible} from $q$ if for every $p\in F_i$ we have $(p,q]\cap P = \emptyset$. Let $I_q (P)=\{i\in [m]\colon F_i \text{ visible}\}$ be the index set of visible facets. A \textbf{half-open polytope} is a polytope without its visible facets:
\[
H_q P \ = \ P\setminus \bigcup _{i\in I_q (P)} F_i \, .
\]
If $P=P_1\cup \cdots \cup P_k$ defines a polyhedral subdivision with maximal cells $P_1,\ldots,P_k$ and $q$ is in general position with respect to all $P_i$ then
\[
H_q P \ = \ H_q P_1 \sqcup \cdots \sqcup H_q P_k
\]
defines a partition~\cite{koppeverdoolaege}. If $P_1,\ldots, P_k$ are simplices, that is, they are the maximal cells of a triangulation $\mathcal{T}$ of $P$ then the $h$-polynomial of the triangulation can be read-off from the half-open decomposition.
\begin{proposition}\label{prop:halfopendecomp}
Let $P=P_1\cup \cdots \cup P_k$ be a triangulation $\mathcal{T}$ and $q$ general with respect to $P_i$ for all $i$. Let $h(t)=h_0+h_1t+\cdots +h_d t^d$ be the $h$-polynomial of $\mathcal{T}$. Then
\[
h_i \ = \ |\{j\in [k]\colon |I_q (P_j)|=i\}| \, .
\]
\end{proposition}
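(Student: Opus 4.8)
The plan is to reduce the statement to a face-by-face count built on the point-set partition $P = H_qP_1 \sqcup \cdots \sqcup H_qP_k$, and then to extract the $h$-vector by comparing generating functions in the basis $\bigl\{t^i(1+t)^{d+1-i}\bigr\}_{i=0}^{d+1}$ of the space of polynomials of degree at most $d+1$. Throughout I take $q$ in the interior of $P$ --- the case relevant for our applications --- so that no facet of $P$ is visible from $q$ and the cited decomposition reads $P = \bigsqcup_{j} H_qP_j$.

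First I would record the local combinatorics of a half-open simplex. Fix a maximal cell $P_j$, with vertices $v_0,\dots,v_d$ and facets $F_0,\dots,F_d$ where $F_\ell = \conv(v_i : i \ne \ell)$, and set $L_j = \{\ell : F_\ell \text{ is visible from } q\}$, so that $|L_j| = |I_q(P_j)|$. Since a simplex is the disjoint union of the relative interiors of its faces and $F_\ell = \bigsqcup\{\operatorname{relint}\tau : \tau \le P_j,\ v_\ell \notin V(\tau)\}$, deleting the visible facets leaves exactly
\[
H_qP_j \ = \ \bigsqcup\, \bigl\{\operatorname{relint}\tau : \tau \le P_j,\ \{v_\ell : \ell \in L_j\} \subseteq V(\tau)\bigr\}\, .
\]
Counting by dimension, the number of faces $\tau \le P_j$ with $\dim\tau = m-1$ and $\operatorname{relint}\tau \subseteq H_qP_j$ equals $\binom{d+1-|L_j|}{m-|L_j|}$, and hence
\[
\sum_{\tau \le P_j,\ \operatorname{relint}\tau \subseteq H_qP_j} t^{\dim\tau + 1} \ = \ t^{|I_q(P_j)|}(1+t)^{d+1-|I_q(P_j)|}\, .
\]

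Next I would glue these local pictures. For a face $\tau \in \mathcal{T}$ pick $x \in \operatorname{relint}\tau$; by the partition there is a unique $j$ with $x \in H_qP_j$, and then $\tau \le P_j$ --- because $\tau$ is the unique face of $\mathcal{T}$ whose relative interior contains $x$, which must be a face of any cell of $\mathcal{T}$ containing $x$ --- while $\tau$ lies in no facet of $P_j$ visible from $q$, since otherwise $x$ would lie on that facet. By the displayed description of $H_qP_j$ this forces $\operatorname{relint}\tau \subseteq H_qP_j$, and disjointness of the $H_qP_{j'}$ makes this $j$ unique. Thus the faces of $\mathcal{T}$, including the empty face, are partitioned according to which half-open cell contains their relative interior, and summing the local generating functions yields
\[
f(t) \ = \ \sum_{j=1}^{k} t^{|I_q(P_j)|}(1+t)^{d+1-|I_q(P_j)|} \ = \ \sum_{i=0}^{d+1} \bigl|\{\, j : |I_q(P_j)| = i \,\}\bigr|\; t^i(1+t)^{d+1-i}\, .
\]
Since $t^i(1+t)^{d+1-i} = t^i + (\text{higher-order terms})$ for $0 \le i \le d+1$, these $d+2$ polynomials are linearly independent; comparing the last display with $f(t) = \sum_i h_i t^i(1+t)^{d+1-i}$ gives $h_i = |\{\, j : |I_q(P_j)| = i \,\}|$, as claimed.

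The binomial identity for surviving faces, the generating-function bookkeeping and the final change of basis are routine. The step that needs genuine care --- and the one I would single out as the main obstacle --- is the gluing: checking that the relative interior of each face of $\mathcal{T}$ lies in exactly one half-open cell. This relies on the cited partition property together with the elementary fact that, in a triangulation, the unique face whose relative interior contains a given point is a face of every cell containing that point.
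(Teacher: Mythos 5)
The paper itself gives no proof of this proposition: it is quoted as a known fact about half-open decompositions (only the underlying partition $H_qP=H_qP_1\sqcup\cdots\sqcup H_qP_k$ is cited, to \cite{koppeverdoolaege}), so there is no in-paper argument to compare with. Your proof is the standard one and its core is sound: the description of which relative interiors of faces survive in a half-open simplex, the resulting local generating function $t^{|I_q(P_j)|}(1+t)^{d+1-|I_q(P_j)|}$, the gluing of nonempty faces via the partition, and the final change of basis are all correct.

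Two points deserve attention. First, your added hypothesis $q\in\operatorname{int}P$ is not a mere convenience: the proposition as printed, for an arbitrary generic $q$, is false. For $P=[0,1]$ triangulated by the single cell $[0,1]$ and $q=2$, the cell has exactly one visible facet, so the right-hand side would give $h_1=1$, whereas $h=(1,0,0)$. So some interiority assumption must be added, and your proof covers only that case; do not, however, take for granted that this is automatically ``the case relevant for our applications'': in the paper's later use (Proposition~\ref{prop:ingoing}) the auxiliary point $\tilde q$ has coordinates of order $t^{a+b+2}$ and lies far outside $P_{K_{a+1,b+1}}$, so applying the interior-point version there requires an extra argument (e.g.\ exhibiting an interior point with the same visibility pattern). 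Second, a small bookkeeping gap: the empty face has empty relative interior, so your partition argument does not assign it to a unique cell; the constant term of your displayed identity amounts to the claim that exactly one $j$ has $I_q(P_j)=\emptyset$. This holds because a generic $q\in\operatorname{int}P$ lies in the interior of exactly one maximal cell and $I_q(P_j)=\emptyset$ if and only if $q\in\operatorname{int}P_j$; adding that sentence closes the gap and completes the proof.
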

In particular, if $P$ is a lattice polytope and $P=P_1\cup \cdots \cup P_k$ defines a unimodular triangulation, then also $h_i^\ast (P) = |\{j\in [k]\colon |I_q (P_j)|=i\}|$ for all $i$.

\subsection{Real-rooted polynomials}
A polynomial $f=\sum _{i=0} ^d a_i t^i$ of degree $d$ with real coefficients is said to be \textbf{real-rooted}, if all its roots are real. If all coefficients of a real-rooted polynomial are nonnegative, or equivalently, all roots are nonpositive, then $a_i^2\geq a_{i-1}a_{i+1}$ for all $i$~\cite{stanley1989log}. A sequence $a_i$ of coefficients satisfying this system of inequalities is called \textbf{log-concave}. An immediate consequence is that the nonnegative, log-concave sequence is \textbf{unimodal}, that is, $a_0\leq a_1\leq \cdots \leq a_k \geq \cdots \geq a_d$ for some $k$. 

The polynomial $f$ is said to be \textbf{palindromic} if $f(t)=t^d f\left(\frac{1}{t}\right)$. It is \textbf{$\gamma$-positive} if there are $\gamma _0,\gamma _1,\ldots,\gamma _{\lfloor \frac{d}{2}\rfloor}\geq 0$ such that $f(t)=\sum _{i\geq 0}\gamma _i t^i (1+t)^{d-2i}$. The polynomial $\sum \gamma_it^i$ is called the \textbf{$\gamma$-polynomial} of $f$. It can be seen that a $\gamma$-positive polynomial is real-rooted if and only if its $\gamma$-polynomial has only real roots.

Let $f$ and $g$ be real-rooted polynomials with roots $a_1\geq a_2\geq \cdots$, respectively, $b_1\geq b_2\geq \cdots$. Then $g$ is said to \textbf{interlace} $f$ if
\[
a_1\geq b_1 \geq a_2\geq b_2 \geq \cdots \, .
\]
In this case we write $g\preceq f$. In particular, $\deg g\leq \deg f\leq \deg g +1$. If $f\preceq g$ or $g\preceq f$ we say that $f$ and $g$ interlace. By the intermediate value theorem, it follows that $f'\preceq f$ for every real-rooted polynomial $f$. The following result by Obreschkoff~\cite{obreschkoff1963verteilung} characterizes the interlacing property.
\begin{theorem}[\cite{obreschkoff1963verteilung}]
Let $f,g\in \RR [t]$ be polynomials with $|\deg f - \deg g|\leq 1$. Then $f$ and $g$ interlace if and only if $cf+dg$ has only real-roots for all $c,d\in \RR$.
\end{theorem}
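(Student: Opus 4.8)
The plan is to prove the two implications separately, after the observation that, up to scalar multiples, $\{cf+dg:(c,d)\neq(0,0)\}$ is the pencil $\{f+\lambda g:\lambda\in\RR\}\cup\{g\}$; so the condition ``$cf+dg$ has only real roots for all $c,d$'' is the same as ``$g$ and every $f+\lambda g$ have only real roots''. We may assume $f,g\neq 0$ and, writing $n=\deg f\ge\deg g$, that $\deg g=n-1$: if $\deg f=\deg g$, replace $g$ by $g-(\ell_g/\ell_f)f$, where $\ell_f,\ell_g$ are the leading coefficients; under either hypothesis of the theorem this has degree exactly $n-1$, and the substitution is harmless for the pencil and for the interlacing relation.

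\textbf{Interlacing $\Rightarrow$ all combinations real-rooted.} Say $g\preceq f$ (the case $f\preceq g$ is symmetric). In the \emph{strict} case --- $f$ has distinct roots $a_1>\cdots>a_n$, $g$ has distinct roots, and $g$ has exactly one root in each open interval $(a_{i+1},a_i)$ --- we have $(f+\lambda g)(a_i)=\lambda g(a_i)$ for each $i$, and for $\lambda\neq 0$ these $n$ numbers strictly alternate in sign; by the intermediate value theorem $f+\lambda g$ has at least $n-1$ real roots, and since $\deg(f+\lambda g)=n$ (as $\deg g=n-1$) and non-real roots come in conjugate pairs, all $n$ roots are real. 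For general interlacing pairs, perturb: choose $f_\varepsilon\to f$ and $g_\varepsilon\to g$ coefficientwise of the same degrees whose roots interlace strictly; each $f_\varepsilon+\lambda g_\varepsilon$ is real-rooted by the strict case, and by Hurwitz's theorem (a locally uniform limit of holomorphic functions that are zero-free on an open half-plane is zero-free there or identically zero) so is $f+\lambda g$. I expect this direction to be routine.

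\textbf{All combinations real-rooted $\Rightarrow$ interlacing.} Taking $(c,d)=(1,0)$ and $(0,1)$ shows $f$ and $g$ are real-rooted. Dividing $f$ and $g$ by $\gcd(f,g)$ --- a real divisor of a real-rooted real polynomial, hence itself real-rooted, and harmless for both hypotheses and for the interlacing relation --- I may assume $f$ and $g$ coprime, so $g$ is nonzero at every root of $f$ and $\deg f=\deg g+1$ still holds. The crux is that a pencil of real-rooted polynomials of fixed degree is rigid: I claim the Wronskian $W:=f'g-fg'$ has constant sign on $\RR$ (it is not identically $0$, since $\deg f>\deg g$ and $f,g$ are coprime). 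If not, pick $r_0$ where $W$ changes sign. If $g(r_0)\neq 0$, then $\lambda_0:=-f(r_0)/g(r_0)$ makes $r_0$ a double root of $f+\lambda_0 g$, because $(f+\lambda_0 g)(r_0)=0$ and $(f+\lambda_0 g)'(r_0)=W(r_0)/g(r_0)=0$; expanding $f+\lambda g$ near $(\lambda_0,r_0)$ as a quadratic in $t-r_0$, its discriminant is $\approx-4A\,g(r_0)\,(\lambda-\lambda_0)$ with $A=\tfrac12(f+\lambda_0 g)''(r_0)\neq 0$, so the two roots near $r_0$ form a complex conjugate pair for $\lambda$ on one side of $\lambda_0$ --- contradicting real-rootedness. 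If $g(r_0)=0$, coprimeness forces $r_0$ to be a double root of $g$ with $f(r_0)\neq 0$, and the analogous expansion of $f+\lambda g$ for $|\lambda|$ large gives the same contradiction. With $W$ of constant sign, $(g/f)'=-W/f^2$ has constant sign on each interval between consecutive roots of $f$, so $g/f$ is monotone there; since $g$ is nonzero at the endpoints, $g/f$ runs monotonically from $+\infty$ to $-\infty$ (or the reverse) and vanishes exactly once, i.e.\ $g$ has exactly one root strictly between consecutive roots of $f$. As $g$ is real-rooted of degree $n-1$ and there are $n-1$ such gaps, these account for all of its roots; this also forces $f$ to have simple roots, and the conclusion is precisely $g\preceq f$.

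The main obstacle is the rigidity claim in the converse. Turning a transversal sign change of $W$ into a genuine complex conjugate pair requires a careful local analysis covering the degenerate configurations (a higher-order root of $f+\lambda_0 g$, the subcase $g(r_0)=0$, the trivial possibility that the pencil degenerates), and in particular one must rule out a collision of two real roots being ``absorbed'' without going complex --- which is exactly what the relation $(t-r_0)^2\sim\mathrm{const}\cdot(\lambda-\lambda_0)$ extracted from the discriminant prevents. Assembling these cases, together with the elementary facts that dividing by $\gcd(f,g)$ and subtracting a multiple of $f$ from $g$ both preserve the interlacing relation, completes the argument.
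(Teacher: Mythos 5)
The paper does not actually prove this statement---it is quoted from Obreschkoff's book---so there is no internal argument to compare with; what you propose is the classical route to the Hermite--Kakeya--Obreschkoff theorem. The first half is in good shape: sign alternation of $\lambda g$ at the roots of $f$ plus a parity count in the strict case, then perturbation to strictly interlacing pairs and Hurwitz's theorem, is a correct and essentially complete outline. (Small caveat: your normalization claim that $g-(\ell_g/\ell_f)f$ has ``degree exactly $n-1$'' fails when $g$ is proportional to $f$, a trivial case to treat separately; and in the converse direction this exact degree is not available a priori at all---it is a consequence of the theorem---so the argument should be written so as not to assume it.)

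The converse, as written, is not yet a proof, and the missing pieces are not only the ones you flag. (a) Your discriminant computation treats only the case where $r_0$ is a zero of $f+\lambda_0 g$ of multiplicity exactly two; for multiplicity $m\ge 3$ the mechanism is different (the $m$ roots splitting off $r_0$ behave like $m$-th roots of a nonzero real multiple of $\lambda-\lambda_0$, so at least $m-2$ of them are non-real for \emph{both} signs of $\lambda-\lambda_0$), and both this and the passage from the truncated quadratic to the true polynomial need a Rouch\'e/continuity-of-roots argument that you only gesture at; the same applies to your $g(r_0)=0$ subcase, where the root of $g$ is only ``at least'' double. (b) The final counting step is circular: ``there are $n-1$ such gaps'' presupposes that $f$ has $n$ distinct roots, which is part of what must be proved; moreover, simplicity of $f$'s roots does \emph{not} follow from the no-sign-change claim alone, since at a root of $f$ of odd multiplicity $m\ge3$ the Wronskian vanishes to the even order $m-1$ and does not change sign. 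A repair that stays inside your framework: on the two unbounded intervals, monotonicity of $g/f$ together with $g/f\to0$ at $\pm\infty$ shows $g$ has no zeros there, so all $n-1$ zeros of $g$ must lie in the bounded gaps, one per gap, forcing $f$ to have $n$ distinct roots; alternatively, rule out multiple roots of $f$ directly by the same local perturbation argument applied at $\lambda_0=0$. (c) The reductions you call harmless---replacing $g$ by $g-(\ell_g/\ell_f)f$ and transporting the interlacing relation back and forth, and dividing out $\gcd(f,g)$ and reinserting common roots---are standard but are genuine little lemmas that need proofs of the same sign-alternation/limiting type. So: right strategy and a sound first half, but the rigidity half needs the degenerate local cases actually carried out and the circular simplicity claim repaired before this counts as a proof.
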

A linear operator $T\colon \RR[t] \rightarrow \RR[t]$ preserves real-rootedness if $T(f)$ has only real roots for any real-rooted polynomial $f\in \RR[t]$. An operator is said to preserve the interlacing property if $T(f)$ and $T(g)$ are interlacing whenever $f$ and $g$ are. An immediate consequence of Obreschkoff's theorem is the following.
\begin{corollary}\label{cor:preserving interlacing}
Let $T\colon \RR[t] \rightarrow \RR[t]$ be a linear operator. Then $T$ preserves real-rootedness if and only if it preserves the interlacing property.
\end{corollary}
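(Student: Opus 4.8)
The plan is to deduce the equivalence directly from Obreschkoff's theorem, using linearity of $T$ only in the forward direction. I would split it into the two implications.

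\emph{Preserving interlacing implies preserving real-rootedness.} Let $f\in\RR[t]$ be real-rooted. Then $f$ interlaces itself: $f\preceq f$ holds trivially from the definition (and, if one prefers, $f'\preceq f$ is the instance the text derives from the intermediate value theorem). Applying the hypothesis to the interlacing pair $(f,f)$ shows that $T(f)$ and $T(f)$ interlace; but asserting $T(f)\preceq T(f)$ presupposes that $T(f)$ is real-rooted. Hence $T(f)$ is real-rooted, which is what we want.

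\emph{Preserving real-rootedness implies preserving interlacing.} Let $f,g\in\RR[t]$ interlace. As recorded just after the definition of interlacing, this forces $|\deg f-\deg g|\le 1$, so Obreschkoff's theorem applies and gives that $cf+dg$ has only real roots for all $c,d\in\RR$. By linearity $cT(f)+dT(g)=T(cf+dg)$, and since $T$ preserves real-rootedness, $cT(f)+dT(g)$ has only real roots for all $c,d\in\RR$. I would then invoke the other direction of Obreschkoff's theorem to conclude that $T(f)$ and $T(g)$ interlace --- the only prerequisite being the degree condition $|\deg T(f)-\deg T(g)|\le 1$.

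Verifying that degree condition is the sole point of the argument that is not a one-line translation, and where I expect the (mild) difficulty to lie. If $T(f)$ or $T(g)$ vanishes there is nothing to prove, under the standard convention that the zero polynomial interlaces every real-rooted polynomial; otherwise, suppose for contradiction that $\deg T(f)$ and $\deg T(g)$ differ by at least $2$. This contradicts real-rootedness of the whole pencil $\{cT(f)+dT(g):c,d\in\RR\}$: via the Hermite--Biehler correspondence, assuming $T(f),T(g)$ linearly independent, the rational function $R=T(f)/T(g)$ would pull $\RR\cup\{\infty\}$ back into $\RR\cup\{\infty\}$, hence map the upper half-plane into one open half-plane, so $R$ or $-R$ is a rational Nevanlinna function, whose numerator and denominator in lowest terms have real interlacing roots and degrees differing by at most one; since $T(f)$ and $T(g)$ are obtained from these by multiplying by the same polynomial, $|\deg T(f)-\deg T(g)|\le 1$, a contradiction (alternatively, in the pencil member of larger degree one lets the free parameter run to $\pm\infty$ and reads off a violation of the Newton inequalities). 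With $|\deg T(f)-\deg T(g)|\le 1$ in hand, Obreschkoff's theorem yields $T(f)\preceq T(g)$ or $T(g)\preceq T(f)$, i.e.\ $T(f)$ and $T(g)$ interlace, completing the proof. In all the concrete operators to which this corollary is later applied the degree condition is in any case transparent, so the argument there reduces to the two-line core above.
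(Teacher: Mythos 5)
Your argument is correct and is exactly the intended one: the paper gives no proof beyond calling the corollary an immediate consequence of Obreschkoff's theorem, and your two implications (self-interlacing, or $f'\preceq f$, for one direction; linearity together with both directions of Obreschkoff for the other) are that standard route. The only non-immediate point --- the bound $|\deg T(f)-\deg T(g)|\le 1$ needed to invoke the converse half of Obreschkoff as stated here --- is precisely where you put the work, and your Hermite--Biehler/Nevanlinna argument (remove common factors; real-rootedness of the whole pencil forces $T(f)/T(g)$ to avoid $\RR\cup\{\infty\}$ off the real axis, hence it or its negative is a rational Pick function, whose numerator and denominator degrees differ by at most one) settles it correctly, with the degenerate cases (zero image, proportional images) rightly dismissed as trivial.
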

An operator acts diagonally if there is a sequence $\{ \lambda_i \} _i\geq 0$ such that $T(t^i)=\lambda_i t^i$ for all $i\geq 0$. If $T$ preserves real-rootedness, then $\{ \lambda_i \} _i\geq 0$ is called a \textbf{multiplier sequence}. The following famous theorem by Poly\'a and Schur~\cite{polya1914zwei} characterizes multiplier sequences.
\begin{theorem}[\cite{polya1914zwei}]\label{thm:multiplier}
Let $\Lambda = \{ \lambda_i \} _i\geq 0$ be a sequence and 
\[
G_\Lambda (x) \ := \ \sum _{i\geq 0} \lambda _i \frac{x^i}{i!} \, .
\]
The following are equivalent.
\begin{itemize}
\item[(i)] $\Lambda$ is a multiplier sequence.
\item[(ii)] $G_\Lambda (x)$ is an entire function that is the limit of real-rooted polynomials whose zeros all have the same sign that converge uniformly  on  compact
subsets of $\mathbb{C}$.
\end{itemize}
\end{theorem}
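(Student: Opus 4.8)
The plan is to run the standard route through the finite‑degree analogue and then pass to the limit. Write $T$ for the diagonal operator $T(t^k)=\lambda_k t^k$, and introduce the auxiliary condition
\[
(\star)\qquad\text{for every }n\ge 1,\ Q_n(t):=T\bigl[(1+t)^n\bigr]=\sum_{k=0}^n\lambda_k\binom{n}{k}t^k\ \text{is real-rooted with all zeros of one sign.}
\]
For $(i)\Rightarrow(ii)$ I would first note that $(1+t)^n$ is real-rooted, so each $Q_n=T[(1+t)^n]$ is real-rooted; that all zeros of $Q_n$ have the same sign follows from the sign‑regularity of multiplier sequences, itself obtained by observing that $f\mapsto\frac1t\,T\!\left(t f(t)\right)$ shows every shift $\{\lambda_{i+j}\}_{i\ge0}$ is again a multiplier sequence, so that the real-rootedness of all the degree‑$2$ restrictions forces $\lambda_{i+1}^2\ge\lambda_i\lambda_{i+2}$ and rules out interior zeros, leaving only the possibilities ``all $\lambda_i\ge0$'', ``all $\lambda_i\le0$'' and a strictly alternating sign pattern — in each case $Q_n$ (or $Q_n(-t)$) has only nonnegative coefficients and hence only same-sign zeros, i.e.\ $(\star)$ holds. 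Now rescale: $Q_n(t/n)=\sum_{k=0}^n\lambda_k\frac{t^k}{k!}\cdot\frac{n!}{(n-k)!\,n^k}$, and since $\frac{n!}{(n-k)!\,n^k}\to 1$ for each fixed $k$ — together with the coefficient bounds forced by real-rootedness with bounded-sign zeros — the polynomials $Q_n(t/n)$ converge locally uniformly on $\mathbb{C}$ to $\sum_k\lambda_k t^k/k!=G_\Lambda(t)$. Thus $G_\Lambda$ is entire and, being a locally uniform limit of the real-rooted same-sign polynomials $Q_n(t/n)$, satisfies $(ii)$.

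For $(ii)\Rightarrow(i)$ I would argue by approximation. After replacing, if necessary, $\lambda_i$ by $(-1)^i\lambda_i$ (i.e.\ $z\mapsto -z$, which corresponds to $f(t)\mapsto f(-t)$ and changes nothing relevant), we may assume all the approximating zeros are nonpositive, so by $(ii)$ and the Hadamard factorization theorem $G_\Lambda(z)=C\,z^m e^{\sigma z}\prod_j(1+\alpha_j z)$ with $\sigma\ge0$, $\alpha_j\ge0$, $\sum_j\alpha_j<\infty$. Truncating the product and replacing $e^{\sigma z}$ by $(1+\sigma z/K)^K$ produces polynomials $P_K$ with only real nonpositive zeros converging locally uniformly to $G_\Lambda$. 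Each $P_K$ is a finite product of the elementary factors $z^m$, $1+\alpha_j z$ and $1+\sigma z/K$; each such factor generates an obvious multiplier sequence (the corresponding operator sends $f$ to a monomial, respectively to a polynomial of degree $\le 1$), and — granting the key sub-fact below — a finite product of such functions generates a multiplier sequence $\Lambda^{(K)}$. Since the Taylor coefficients of $P_K$ converge to those of $G_\Lambda$, we get $\lambda^{(K)}_k\to\lambda_k$ for every fixed $k$, so for any fixed real-rooted $f$ the polynomials $T^{(K)}[f]$ converge locally uniformly to $T[f]$; by Hurwitz's theorem $T[f]$ is then real-rooted (or identically zero). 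This proves $(i)$ and closes the loop $(i)\Leftrightarrow(\star)\Rightarrow(ii)\Rightarrow(i)$.

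The main obstacle is exactly the key sub-fact used above: \emph{a real polynomial with all its zeros real and of one sign generates a multiplier sequence}, equivalently the Malo–Schur–Szeg\H{o} composition theorem (if $A$ has only real zeros of one sign and $B$ only real zeros, then the Schur–Szeg\H{o} composition of $A$ and $B$ has only real zeros), which is also what gives the sharp finite statement $(\star)\Leftrightarrow$ ``$T$ preserves real-rootedness on polynomials of degree $\le n$''. I would prove it either by reducing to the single factor $1+\alpha z$ (trivial) and showing the multiplier property is preserved under the Schur–Szeg\H{o} composition, or via the two-variable identity $T[f](t)=(T_A)_y(T_B)_x\bigl[f(x+y)\bigr]\big|_{x=y=t}$ together with the fact that multiplier operators preserve (real) stability, since $f(x+y)$ is nonvanishing whenever $x$ and $y$ lie in the open upper half-plane. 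Beyond this, the remaining care in the argument is routine but must be handled: the locally uniform convergence $Q_n(t/n)\to G_\Lambda$ needs the standard coefficient estimates for same-sign-rooted polynomials, the passage from $(ii)$ to the canonical product for $G_\Lambda$ is the usual identification of the Laguerre–Pólya class of limits of same-sign-rooted polynomials, and the final step needs Hurwitz's theorem applied on a fixed compact set containing the (bounded, since the leading coefficients do not degenerate) zero sets of the $T^{(K)}[f]$.
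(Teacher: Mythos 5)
This statement is quoted verbatim from P\'olya--Schur \cite{polya1914zwei}; the paper gives no proof of it (it is used as a black box in Proposition~\ref{prop:gammainterlacing}), so there is no in-paper argument to compare yours against, and I can only assess your outline on its own terms. What you propose is the classical P\'olya--Schur argument and its architecture is sound: for (i)$\Rightarrow$(ii) the passage through the Jensen-type polynomials $Q_n(t)=T[(1+t)^n]$ and the rescaling $Q_n(t/n)\to G_\Lambda$ is exactly the standard route, and the two points you flag as needing care really are the ones that do --- the sign-regularity lemma for multiplier sequences (your Tur\'an-inequality sketch needs to be supplemented by testing $T$ on polynomials such as $t^i(1+t)^{j-i}$ to exclude mixed sign patterns and interior zero gaps), and the entirety of $G_\Lambda$ plus local uniform convergence, which follows from the elementary-symmetric-function bound $e_k\leq e_1^k/k!$ applied to $Q_n(t/n)$ (with the degenerate cases $\lambda_0=0$ or finitely supported $\Lambda$ treated separately). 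For (ii)$\Rightarrow$(i), the identification of the limit class with $Cz^me^{\sigma z}\prod_j(1+\alpha_j z)$, the truncation to $P_K$, the coefficientwise convergence $\lambda_k^{(K)}\to\lambda_k$, and the Hurwitz argument are all correct.

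The one genuine gap is the one you name yourself: the sub-fact that a real polynomial with all zeros real and of one sign generates a multiplier sequence (Malo--Schur--Szeg\H{o}). This is not a routine verification --- it is the actual heart of the converse direction, and your first suggested reduction does not come for free, since the sequence generated by a product of linear factors is the binomial convolution, not the termwise product, of the sequences generated by the factors, so closure of multiplier sequences under termwise products does not apply. Your second route is viable: the bivariate identity $T_{AB}[f](t)=(T_A)_y(T_B)_x\bigl[f(x+y)\bigr]\big\vert_{x=y=t}$ is correct (both sides have coefficients given by the binomial convolution), but making it into a proof requires showing that the single-factor operators preserve real stability when applied in one variable of a bivariate stable polynomial and that diagonalization preserves stability --- essentially a Laguerre/Grace-type or Borcea--Br\"and\'en argument, i.e.\ real content rather than bookkeeping. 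So: correct strategy, standard classical proof shape, but the composition theorem must either be proved along one of these lines or cited explicitly, just as the paper cites \cite{polya1914zwei} for the whole statement.
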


\section{Facets and triangulations}
\subsection{Facets}\label{sec:facets}
In this section we provide a combinatorial description of the facets of the symmetric edge polytope $P_G$ of an arbitrary finite simple 
graph $G=(V,E)$ with vertex set $V$ and edge set $E$. From the definition it follows that the vertices of $P_G$ are contained in the lattice $M = \{x\in \Zz^V\colon \sum _{v\in V} x_v =0\}$. The dual lattice $N= \Zz^{V}/(1_v)_{v\in V}\Zz$ consists of functions $f:V\rightarrow \Zz$, where two functions are identified if they differ by a common constant. Every such function can be identified with associations of integers to vertices of the graph $G$, up to addition of a (common) constant. Since $P_G$ is reflexive, every facet defining hyperplane is of the form $\{x\in M \colon \sum _{v\in V} f(v)x_v=1\}$ for some $f\in N$.

\begin{theorem}\label{thm:facets}
Let $G=(V,E)$ be a finite simple connected graph. Then $f\colon V\rightarrow \Zz$ is facet defining if and only if
\begin{itemize}
\item[(i)] for any edge $e=uv$ we have $|f(u)-f(v)|\leq 1$, and
\item[(ii)] the subset of edges $E_f=\{e=uv\in E \colon |f(u)-f(v)|=1\}$ forms a spanning subgraph of $G$.
\end{itemize}
\end{theorem}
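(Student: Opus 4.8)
The plan is to characterize facet-defining $f \in N$ by combining the reflexivity of $P_G$ with a dimension count. Since $P_G$ is reflexive, by the discussion preceding the theorem every facet lies on a hyperplane $H_f = \{x \in M \colon \sum_{v} f(v) x_v = 1\}$ for some $f \in N$, and conversely $f$ is facet-defining precisely when $H_f$ is a supporting hyperplane of $P_G$ (i.e. $\sum_v f(v) x_v \leq 1$ for all $x \in P_G$) and $H_f \cap P_G$ has dimension $\dim P_G - 1 = |V| - 2$ (here I use connectedness of $G$, which guarantees $P_G$ is full-dimensional in $M$, i.e. $\dim P_G = |V| - 1$; this should be recorded as a preliminary observation).

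First I would translate the supporting-hyperplane condition into condition (i). The vertices of $P_G$ are exactly the points $e_u - e_v$ and $e_v - e_u$ for $uv \in E$, and $\langle f, e_u - e_v\rangle = f(u) - f(v)$. So $H_f$ supports $P_G$ with the face on the side $\langle f, x\rangle \le 1$ if and only if $|f(u) - f(v)| \leq 1$ for every edge $uv$, which is exactly (i). (If $|f(u)-f(v)| \ge 2$ for some edge, the corresponding vertex violates the inequality; if all differences are $\le 1$, every vertex satisfies it and at least one attains $1$ as soon as some edge has $|f(u)-f(v)|=1$, which is forced once $f$ is nonconstant and (i) holds — but I should be careful and handle the case $E_f = \emptyset$ separately, where $H_f$ is not a supporting hyperplane at all since no vertex attains the value $1$.)

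Next, assuming (i) holds, I would compute the dimension of the face $F_f = \operatorname{conv}(e_u - e_v : uv \in E_f,\ f(u) - f(v) = 1)$, i.e. the vertices of $P_G$ lying on $H_f$. The key linear-algebra step: $F_f$ spans the affine hyperplane $H_f$ inside the affine hull of $M$ if and only if the vectors $\{e_u - e_v : uv \in E_f\}$ span the $(|V|-1)$-dimensional space $\mathbb{R}^V/\langle \mathbf{1}\rangle$ (equivalently, span $M \otimes \mathbb{R}$), and this happens exactly when the edge set $E_f$ connects all of $V$, i.e. $E_f$ is a spanning connected subgraph — which is condition (ii). This is the standard fact that the "edge vectors" $e_u - e_v$ of a graph span the degree-zero space iff the graph is connected (spanning); I'd cite or reprove it via the incidence matrix having rank $|V| - (\text{number of components})$. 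One subtlety: (ii) in the statement says "spanning subgraph," which for a connected $G$ I read as "connected and covering all vertices"; I should make sure the intended reading is that $E_f$ spans all vertices and is connected — if $G$ is connected this is the natural meaning, and otherwise one would need spanning on each component. Since the hypothesis is that $G$ is connected, I will phrase (ii) as: $(V, E_f)$ is connected.

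The main obstacle I anticipate is the careful bookkeeping between the lattice $M \subset \mathbb{Z}^V$ and the dual lattice $N = \mathbb{Z}^V / \mathbb{Z}\mathbf{1}$, and making the affine-dimension argument airtight: I need to show not merely that the vertices on $H_f$ affinely span a $(|V|-2)$-dimensional set, but to convert between "the vectors $e_u - e_v$ for $uv \in E_f$ linearly span $M\otimes\mathbb{R}$" and "the points $e_u-e_v$ affinely span the hyperplane $H_f$." The bridge is that $0 \notin H_f$ (since the defining value is $1 \neq 0$), so affine span of a subset of $H_f$ equals (translate of) the linear span of differences, and one checks that the differences $(e_u - e_v) - (e_{u'} - e_{v'})$ of vertices in $E_f$ together with enough such vectors recover the full linear span of $\{e_u - e_v : uv \in E_f\}$ precisely when that set already has an element, i.e. $E_f \ne \emptyset$, combined with connectivity. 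I would also double-check the edge case $|V| = 1$ or $G$ a single edge to confirm the characterization degenerates correctly. Everything else is routine.
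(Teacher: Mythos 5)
Your proposal is correct and follows essentially the same route as the paper's proof: condition (i) is extracted from the supporting-hyperplane inequality (the paper phrases this via central symmetry of $P_G$, you via evaluating $f$ at both vertex orientations $\pm(e_u-e_v)$, which is the same fact), and condition (ii) is matched with the face having dimension $|V|-2$ via the standard fact that the edge vectors $e_u-e_v$ span the degree-zero space exactly when $E_f$ is connected and spanning, together with the affine-versus-linear-span bridge that the paper uses implicitly when passing between $|V|-1$ linearly independent vertices and a spanning tree. Your additional care about the edge case $E_f=\emptyset$ and the full-dimensionality of $P_G$ for connected $G$ is consistent with the paper's tacit assumptions and does not change the argument.
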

\begin{proof}
First we show that for any function $f\colon V\rightarrow \Zz$ that satisfies the conditions (i) and (ii) the hyperplane $\{x\in M \colon \sum _{v\in V} f(v)x_v=1\}$ defines a facet. 
By condition (ii) the set $X_f=\{e_u-e_v \colon f(u)-f(v)=1\}$ of vertices of $P_G$ that lie on the hyperplane defined by $f$ span $\{x\in \Zz^V\colon \sum _{v\in V} x_v =0\}$. 
Moreover, by condition (i), $f$ is maximized on $X_f$. Therefore, $X_f$ is a facet and thus $f$ is facet defining.

For the other direction, let $F=\{x\in P_G\colon \sum _{v\in V} f(v)x_v=1\}$ be a facet of $P_G$ defined by some $f\colon V \rightarrow \Zz$. Then $e_u-e_v\in F$ if and only if $f(u)-f(v)=1$. Since $F$ is a facet, there are $|V|-1$ linearly independent vertices in $F$, say $e_{u_1}-e_{v_1},\ldots,e_{u_{|V|-1}}-e_{v_{|V|-1}}$. Because of linear independence $\{u_1v_1,\ldots,u_{|V|-1}v_{|V|-1}\}\subset E_f$ is a spanning tree and therefore also $E_f$ is spanning and condition (ii) is satisfied. Since $P_G$ is symmetric, $-F=\{x\in P_G\colon \sum _{v\in V} f(v)x_v=-1\}$ is also a facet and therefore $f(x)\in [-1,1]$ for all $x\in P_G$. In particular, evaluating $f$ at vertices of $P_G$ shows that condition (i) is satisfied.
\end{proof}
From the proof of Theorem~\ref{thm:facets} we see the following.
\begin{corollary}
The unimodular simplices contained in a facet of $P_G$ represented by a function $f$ correspond exactly to spanning trees consisting of all edges $vw$ such that $|f(v)-f(w)|=1$.
\end{corollary}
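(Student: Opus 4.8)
The plan is to read the correspondence off from the proof of Theorem~\ref{thm:facets}. Fix a facet $F=\{x\in P_G\colon \sum_{v\in V}f(v)x_v=1\}$ represented by $f$. Since $P_G$ is terminal, its only lattice points are the origin and its vertices, and the origin lies in the interior; hence the lattice points of $P_G$ lying on $F$ are exactly the vertices $e_u-e_v$ with $f(u)-f(v)=1$, each corresponding to an edge $uv\in E_f$ together with the orientation dictated by $f$. A simplex of maximal dimension $\dim F=|V|-2$ contained in $F$ is therefore the convex hull of $|V|-1$ such vertices $e_{u_1}-e_{v_1},\dots,e_{u_{|V|-1}}-e_{v_{|V|-1}}$ with each $u_iv_i\in E_f$.

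Next I would invoke the classical dictionary between the lattice $M=\{x\in\Zz^V\colon\sum_{v\in V}x_v=0\}$ and the graphic matroid of $G$: a family of vectors $e_u-e_v$ (with $uv\in E$) is linearly independent if and only if the underlying edges form a forest, so a linearly independent family of $|V|-1$ of them is precisely a spanning tree; and, as one sees by peeling off a leaf of the tree and inducting on $|V|$, the vectors of a spanning tree form not merely a $\Qq$-basis but a $\Zz$-basis of $M$. Replacing $e_u-e_v$ by $e_v-e_u$ changes neither linear independence nor the underlying edge, so only the condition $|f(u)-f(v)|=1$ matters, not which sign occurs. Consequently $\conv(e_{u_1}-e_{v_1},\dots,e_{u_{|V|-1}}-e_{v_{|V|-1}})$ is a unimodular simplex of $\partial P_G$ --- equivalently, $\conv(0,e_{u_1}-e_{v_1},\dots,e_{u_{|V|-1}}-e_{v_{|V|-1}})$ is unimodular in $M$, which is what unimodularity in the boundary triangulation means --- precisely when $\{u_1v_1,\dots,u_{|V|-1}v_{|V|-1}\}$ is a spanning tree of $G$ all of whose edges lie in $E_f$; moreover, in that case it is automatically unimodular. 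This is exactly the claimed bijection.

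There is no real obstacle beyond this standard graph-lattice lemma, which was in fact already used implicitly in the final paragraph of the proof of Theorem~\ref{thm:facets}. The only point needing a word of care is that ``unimodular simplex contained in a facet'' must be read as a simplex of the full dimension $|V|-2$ of the facet --- its proper faces then corresponding, more generally, to spanning forests of $(V,E_f)$ --- so that the count matches spanning \emph{trees} rather than forests.
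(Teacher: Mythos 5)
Your proposal is correct and follows essentially the same route as the paper: the paper derives this corollary directly from its proof of Theorem~\ref{thm:facets}, where the vertices on the facet are exactly the $e_u-e_v$ with $f(u)-f(v)=1$ and maximal linearly independent subsets of them are spanning trees inside $E_f$. Your only addition is to make explicit the standard graph--lattice fact (spanning-tree vectors form a $\Zz$-basis of $M$, via leaf-peeling), which the paper leaves implicit, and this is a reasonable amount of detail to supply.
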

In case of complete graphs, complete bipartite graphs and, more generally, complete multipartite graphs, Theorem~\ref{thm:facets} leads to a simple description of the facets of the symmetric edge polytope that moreover allows for an easy counting formula. For complete graphs the following is immediate.
\begin{corollary}
Let $K_n$ be a complete graph with vertex set $V$. Then a function $f\colon V\rightarrow \mathbb{Z}$ is facet defining if and only if, up to a constant, $f(V)=\{0,1\}$.
\end{corollary}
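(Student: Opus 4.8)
The plan is to specialise Theorem~\ref{thm:facets} to the complete graph $K_n$, where every pair of distinct vertices forms an edge, so that conditions (i) and (ii) become constraints on the whole image $f(V)$ rather than on individual edges. I would first dispose of the trivial case $n=1$ (where $P_{K_1}$ is a point and there is nothing to prove), and then assume $n\geq 2$.

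For the forward implication, suppose $f$ is facet defining. Applying condition (i) of Theorem~\ref{thm:facets} to every pair $u,v\in V$ (all of which are edges of $K_n$) gives $|f(u)-f(v)|\leq 1$ for all $u,v$, so $f(V)$ is a set of integers of diameter at most $1$; hence $f(V)\subseteq\{m,m+1\}$ for some $m\in\Zz$, and after subtracting the constant $m$ we may assume $f(V)\subseteq\{0,1\}$. It remains to rule out $f$ constant: if $f(V)=\{0\}$ then $E_f=\varnothing$, which for $n\geq 2$ is not a spanning subgraph, contradicting condition (ii). Therefore $f(V)=\{0,1\}$.

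For the converse, assume $f(V)=\{0,1\}$ up to an additive constant. Condition (i) holds trivially since every difference $f(u)-f(v)$ lies in $\{-1,0,1\}$. For condition (ii), note that $E_f$ consists of exactly those edges joining a vertex in $A:=f^{-1}(0)$ to a vertex in $B:=f^{-1}(1)$; since $K_n$ is complete, $E_f$ is the complete bipartite graph on the bipartition $(A,B)$, and as both $A$ and $B$ are non-empty every vertex is incident to an edge of $E_f$, so $E_f$ is a spanning subgraph of $K_n$. By Theorem~\ref{thm:facets}, $f$ is facet defining.

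I do not expect a genuine obstacle here: the statement is a direct corollary, and the only point requiring a word of care is the interpretation of ``spanning subgraph'' (covering all vertices) together with the degenerate case $n=1$, both of which are handled as above.
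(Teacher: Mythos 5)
Your argument is correct and is exactly the direct specialisation of Theorem~\ref{thm:facets} that the paper has in mind when it calls this corollary immediate (no separate proof is given there). One small caution: as its use in Proposition~\ref{prop:facetsbipartite} shows, the paper reads condition (ii) as requiring a \emph{connected} spanning subgraph, but this is harmless here since $E_f$ is the complete bipartite graph between the non-empty fibres $f^{-1}(0)$ and $f^{-1}(1)$, which is connected and covers all vertices.
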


\begin{proposition}\label{prop:facetsbipartite}
Let $K_{a,b}$ be a complete bipartite graph with vertex set $v_1,\dots,v_a,w_1,\dots,w_b$ and edge set $\{\{v_i,w_j\} : 1\leq i\leq a, \; 1\leq j\leq b\}$. Then $f:V\rightarrow\Zz$ defines a facet if and only if $f$, up to a constant, satisfies one of the following conditions.
\begin{enumerate}
\item[(i)] $f(v_i)=0$ for all $i$ and $f(w_j)\in \{-1,1\}$ for all $j$, or
\item[(ii)] $f(w_j)=0$ for all $j$ and $f(v_i)\in \{-1,1\}$ for all $i$.
\end{enumerate}
In particular, the polytope $P_{K_{a,b}}$ has  $2^a+2^b-2$ facets.
\end{proposition}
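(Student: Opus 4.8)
The plan is to apply Theorem~\ref{thm:facets} directly to $G=K_{a,b}$. Since $K_{a,b}$ is connected (assuming $a,b\geq 1$; the degenerate cases can be checked separately), a function $f\colon V\to\Zz$ is facet defining if and only if (i) $|f(u)-f(v)|\leq 1$ for every edge, and (ii) the edges on which $f$ changes by exactly $1$ span $G$. The first step is to analyze what condition (i) forces for the bipartition $V = \{v_1,\dots,v_a\}\sqcup\{w_1,\dots,w_b\}$: every $v_i$ is adjacent to every $w_j$, so all values $f(v_i)$ and $f(w_j)$ must be pairwise within distance $1$ of each other across the parts, hence the whole image $f(V)$ lies in a set of consecutive integers of length at most $2$; after subtracting a constant we may assume $f(V)\subseteq\{-1,0,1\}$, and moreover if some $v_i$ has $f(v_i)=1$ and some $v_{i'}$ has $f(v_{i'})=-1$ then every $w_j$ would need to satisfy $|f(w_j)-1|\leq 1$ and $|f(w_j)+1|\leq 1$, forcing $f(w_j)=0$ for all $j$; symmetrically on the other side. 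So up to a constant, either all $f(v_i)=0$, or all $f(w_j)=0$ (or both, which is the case $f(v_i),f(w_j)\in\{0,1\}$ after a shift, covered by either alternative).

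The second step is to impose the spanning condition (ii) in each case. In case all $f(v_i)=0$: here $|f(v_i)-f(w_j)| = |f(w_j)|$, so $E_f$ is exactly the set of edges incident to a vertex $w_j$ with $f(w_j)\in\{-1,1\}$, i.e. $f(w_j)\neq 0$. For $E_f$ to be spanning, every vertex must be covered; since every $w_j$ with $f(w_j)\neq 0$ is covered and the $v_i$'s are covered as soon as at least one such $w_j$ exists, the only obstruction is a $w_j$ with $f(w_j)=0$ — such a vertex is isolated in $E_f$. Hence $E_f$ is spanning iff $f(w_j)\in\{-1,1\}$ for all $j$, which is condition (i) of the proposition; symmetrically, assuming all $f(w_j)=0$ gives condition (ii). This establishes the characterization.

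For the count, I would first show the two families in (i) and (ii) are essentially disjoint: a facet counted in (i) has the $v_i$'s constant, a facet in (ii) has the $w_j$'s constant, and these overlap precisely when both hold, i.e. when (up to a constant) $f$ is identically $0$ on one part and, by the argument above, takes values in $\{-1,1\}$ on the other — but if the other part's values are forced to lie in $\{-1,1\}$ \emph{and} that part is also constant, then $f$ is constant on all of $V$, which is not facet defining. More carefully: normalize by requiring $\min f(V)=0$. Then case (i) facets are those with $f(v_i)=0$ for all $i$ and $f(w_j)\in\{0,1\}$ not all zero, giving $2^b-1$ choices; case (ii) facets are those with $f(w_j)=0$ for all $j$ and $f(v_i)\in\{0,1\}$ not all zero, giving $2^a-1$ choices; the overlap consists of $f$ with all $v_i$ zero and all $w_j$ zero, excluded. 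Wait — one must double-check the normalization does not double count a single facet: an $f$ with image $\{0,1\}$ and $f\equiv 0$ on the $v$'s corresponds to a unique facet, and the only $f$'s satisfying \emph{both} normalized descriptions are those constant on both parts and lying in $\{0,1\}$, i.e. $f\equiv 0$ or $f\equiv 1$, both non-facets after the shift, so there is genuinely no overlap among actual facets. This yields $(2^a-1)+(2^b-1) = 2^a+2^b-2$ facets.

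The main obstacle is purely bookkeeping: making the ``up to a constant'' normalization precise so that each facet hyperplane is counted exactly once, and correctly handling the boundary between cases (i) and (ii) where $f$ takes values in $\{0,1\}$ on one part and is zero on the other (here the same facet could a priori be described by either clause). Choosing a canonical representative — e.g. shift so that the minimum value is $0$ — cleanly resolves this, after which the enumeration is immediate. The degenerate cases $a=0$ or $b=0$ (where $K_{a,b}$ has no edges and $P_G$ is a point, or is disconnected) should be noted as trivially consistent with the formula or excluded by the standing connectedness hypothesis.
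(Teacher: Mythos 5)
There is a genuine gap in your reduction to the two cases. From ``$f(V)\subseteq\{-1,0,1\}$ after a shift'' you conclude that, up to a constant, either all $f(v_i)=0$ or all $f(w_j)=0$; but your argument only shows this when one of the parts attains \emph{both} extreme values $\pm 1$. The remaining case --- $f$ non-constant on \emph{both} parts with values in $\{0,1\}$, e.g.\ $f(v_1)=0,\ f(v_2)=1,\ f(w_1)=0,\ f(w_2)=1$ on $K_{2,2}$ --- is not ``covered by either alternative'' as your parenthesis claims: there neither part is constant, so the function is of neither type (i) nor type (ii) up to a constant, and it must be \emph{excluded}. Your version of condition (ii) of Theorem~\ref{thm:facets} cannot exclude it: you read ``spanning'' as ``every vertex is covered'', and in this example $E_f=\{v_1w_2,\,v_2w_1\}$ does cover all vertices, yet the face $\{x: \sum f(v)x_v=1\}$ contains only the two vertices $e_{v_2}-e_{w_1}$ and $e_{w_2}-e_{v_1}$ of the $3$-dimensional polytope $P_{K_{2,2}}$, so it is not a facet. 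What is really needed (and what the proof of Theorem~\ref{thm:facets} delivers, since $E_f$ must contain a spanning tree) is that $E_f$ be \emph{connected} and spanning; the paper's proof of this proposition devotes its second half precisely to showing that for $f$ non-constant on both parts the graph $E_f$ splits into two components and hence condition (ii) fails. Your proposal is silent on exactly this point, so the ``only if'' direction is incomplete. (In the case you do treat, all $f(v_i)=0$, coverage and connectivity happen to coincide, which is why no problem surfaces there.)

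The facet count also rests on a faulty normalization, even though the final number is correct. With the convention $\min f(V)=0$, a type (i) function with mixed signs on the $w_j$ normalizes to $f(v_i)=1$ and $f(w_j)\in\{0,2\}$, not to ``$f(v_i)=0$, $f(w_j)\in\{0,1\}$''; moreover most functions with $f(v_i)=0$ and $f(w_j)\in\{0,1\}$ (some $0$, some $1$) are \emph{not} facet defining, since the $w_j$ with $f(w_j)=0$ are isolated in $E_f$. The clean count is the one implicit in the statement: type (i) gives $2^b$ sign patterns, type (ii) gives $2^a$, and the two families coincide (up to a constant) exactly for the two patterns that are constant on each part ($f\equiv 0$ on the $v$'s, $f\equiv \pm1$ on the $w$'s, which equal $f\equiv\mp1$ on the $v$'s, $f\equiv 0$ on the $w$'s after a shift), yielding $2^a+2^b-2$.
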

\begin{proof}
By Theorem~\ref{thm:facets}, every function $f$ that satisfies condition (i) or (ii) defines a facet. It thus remains to prove that every $f$ such that $F=\{x\in P_G\colon \sum _{v\in V} f(v)x_v=1\}$ is a facet satisfies, up to  a constant, one of the conditions (i) or (ii). First, we assume that $f$ is constant on one part of the graph; without loss of generality $f(v_i)=0$ for all $1\leq i\leq a$. By condition (i) in Theorem~\ref{thm:facets}, $f(w_j)\in \{-1,0,1\}$ for all $1\leq j\leq b$. However, if there was a $j$ with $f(w_j)=0$, then the graph given in (ii) of Theorem~\ref{thm:facets} was not connected. Hence we must have $f(w_j)\in \{-1,1\}$ for all $1\leq j\leq b$.

We are left with excluding the possibility that $f$ is nonconstant on both parts of the graph. We give a proof by contradiction and assume that there is such a facet defining function $f$. Without loss of generality we may assume that $f(v_1)=0$, $f(v_i)\geq 0$ for all $2\leq i\leq a$ and there exists a $2\leq k\leq a$ with $f(v_k)>0$. If $f(v_k)>1$ then $f(w_j)=1$ must hold for all $j$ in order to be able to satisfy condition (i) of Theorem~\ref{thm:facets}. This, however, is a contradiction to the assumption that $f$ is non-constant on both parts. Thus, $f(v_i)\in \{0,1\}$ for all $2\leq i\leq a$ and hence $f(w_j)\in \{0,1\}$ for all $j$. However, then the graph given in (ii) of Theorem~\ref{thm:facets} is not connected, again a contradiction.
\end{proof}

\begin{proposition}\label{prop:facetskpartite}
Let $k\geq 3$ and $G=K_{a_1,\dots,a_k}$ be a complete $k$-partite graph with vertex set $V=\bigcup _{i=1}^k A_i$ and edge set $\{uv \colon u\in A_i, v\in A_j, i\neq j\}$. Then $f\colon V\rightarrow \mathbb{Z}$ is facet defining if and only if $f$, up to a constant, satiesfies one of the following conditions.
\begin{enumerate}
\item[(i)] $f(A_i)=\{-1,1\}$ for some $1\leq i\leq k$ and $f_{|A_j}=0$ for all $i\neq j$, or 
\item[(ii)] $f(V)=\{0,1\}$ and
\subitem[a] $f$ is constant on $A_i$ for all $1\leq i\leq k$, or
\subitem[b] there exist an $i\neq j$ such that $f(A_i)=\{0,1\}=f(A_j)$ . 
\end{enumerate}
In particular, the polytope $P_G$ has 
$2^{\sum_{i=1}^k a_i}-\sum_{i=1}^k (2^{a_i}-2)-2$
facets. 
\end{proposition}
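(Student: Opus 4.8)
The plan is to apply Theorem~\ref{thm:facets} to the complete $k$-partite graph $G = K_{a_1,\dots,a_k}$, first establishing the characterization of facet-defining functions in (i) and (ii), and then deriving the counting formula by inclusion-exclusion. For the characterization, the easy direction is to check that any $f$ of the listed forms satisfies conditions (i) and (ii) of Theorem~\ref{thm:facets}: in case (i), the edges $E_f$ consist of all edges from $A_i$ to the rest of the graph, which spans $V$ since every vertex of $A_i$ is joined to some vertex outside (as $k \geq 2$) and conversely; in case (ii)(a), $E_f$ consists of all edges between the parts where $f=0$ and the parts where $f=1$, and one checks this is spanning precisely because the graph is complete multipartite and both level sets are nonempty unions of parts; in case (ii)(b), even if some part has $f$ identically $0$ or identically $1$, the presence of two parts $A_i, A_j$ each taking both values $0$ and $1$ guarantees every vertex lies on some edge of $E_f$.

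For the converse, let $f$ be facet-defining. By condition (i) of Theorem~\ref{thm:facets} applied across any two distinct parts, all values $f(u) - f(v)$ for $u,v$ in distinct parts lie in $\{-1,0,1\}$; a short argument (pick any vertex as a reference) shows that, up to an additive constant, $f$ takes values in some set of at most three consecutive integers, and in fact either $f(V) = \{0,1\}$ (up to a constant) or else $f$ is, up to a constant, of the form where exactly one part carries the values $\{-1,1\}$ and all others are $0$ — this is where the case split originates. In the three-value situation $f(V) = \{-1,0,1\}$ one shows that the vertices with $f = -1$ and those with $f = 1$ must all lie in a single common part (otherwise condition (i) of Theorem~\ref{thm:facets} is violated between those parts), forcing case (i); in the two-value situation $f(V) = \{0,1\}$, the spanning condition (ii) of Theorem~\ref{thm:facets} rules out exactly the configurations where at most one part is non-constant and every other part is monochromatic in a way that leaves some vertex isolated in $E_f$ — which, after unwinding, is precisely the complement of (ii)(a) together with (ii)(b). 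I expect the main obstacle to be carefully handling the boundary subcases in the $\{0,1\}$-valued situation: deciding exactly when a $\{0,1\}$-coloring of the parts yields a spanning $E_f$, particularly when only a single part is bichromatic, and checking this dovetails exactly with the statement's condition (ii)(a)/(b).

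For the count, the strategy is to enumerate all facet-defining $f$ up to an additive constant. Functions of type (i) are counted by choosing the special part $A_i$ and then a nonempty proper "sign pattern" on it — but since $f(A_i) = \{-1,1\}$ requires both values to occur, there are $2^{a_i} - 2$ such patterns on $A_i$, giving $\sum_{i=1}^k (2^{a_i} - 2)$ functions of type (i) (these are automatically distinct across different $i$ and not normalizable to a $\{0,1\}$-function). Functions with $f(V) = \{0,1\}$ correspond to nonempty proper subsets $S \subsetneq V$ (namely $S = f^{-1}(1)$), of which there are $2^{\sum a_i} - 2$; among these we must discard those violating both (ii)(a) and (ii)(b), i.e.\ the $S$ for which $f$ fails to be spanning. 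Unwinding the characterization, the discarded ones are exactly those where $S$ restricted to every part is either empty or the whole part (constant on each part) \emph{and} this fails to span — but a quick check shows that when $f$ is constant on each part with both values attained, $E_f$ \emph{is} spanning (since $k \geq 3$, or even $k\geq 2$), so there is nothing to discard there; rather the genuinely bad $S$ are those that are non-constant on at most one part. One then counts the bad $S$: they are obtained by picking the (at most one) bichromatic part, choosing an arbitrary nonempty-proper pattern there or making all parts constant, and this bookkeeping collapses to exactly $\sum_{i=1}^k(2^{a_i}-2) + 2$ bad configurations that must be removed or were never spanning, but overlap with the type-(i) count must be reconciled. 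After assembling: total $= \bigl(2^{\sum a_i} - 2\bigr) - \sum_{i=1}^k(2^{a_i}-2)$, which is the claimed formula; the delicate point will be making sure no facet is double-counted between type (i) and the $\{0,1\}$-valued family and that the inclusion-exclusion bookkeeping of "bad" subsets is exact, which I would verify by re-deriving the same number via the bijection with spanning subgraphs from the Corollary.
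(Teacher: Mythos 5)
Your overall route coincides with the paper's: deduce both directions from Theorem~\ref{thm:facets} and then count facet-defining functions up to an additive constant; the three-valued case forcing type (i) and the reduction to $\{0,1\}$-valued functions are handled as in the paper. The genuine gap is exactly the point you postpone as the ``main obstacle'': deciding which $\{0,1\}$-valued $f$ have $E_f$ containing a spanning tree (note that ``spanning'' here must be read as connected and covering all vertices, so absence of isolated vertices is not the right criterion --- already in your easy direction for (ii)[b] connectivity is only obtained by routing through a third part, which is where $k\geq 3$ enters). Your assertion that the non-spanning configurations are ``precisely the complement of (ii)(a) together with (ii)(b)'' does not survive the check you defer: if exactly one part $A_i$ is bichromatic while the remaining parts are each constant but do \emph{not} all carry the same value, then every vertex of $A_i$ is joined to an oppositely valued constant part and constant parts of different values are completely joined to each other, so $E_f$ is a connected spanning subgraph and $f$ is facet defining by Theorem~\ref{thm:facets}. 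What condition (ii) of Theorem~\ref{thm:facets} actually excludes among $\{0,1\}$-valued functions are the constant functions together with those that are non-constant on exactly one part and constant \emph{and equal} on all other parts; this is the statement the paper's argument relies on, and your proof needs to establish it explicitly rather than leave it to be ``unwound''.

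This slip propagates into the count. The excluded $\{0,1\}$-functions number $2\sum_{i=1}^k\bigl(2^{a_i}-2\bigr)$ (choose the unique bichromatic part, one of its $2^{a_i}-2$ non-constant patterns, and one of the two possible common values on the remaining parts), plus the $2$ constant functions; to the surviving $2^{\sum_i a_i}-2-2\sum_{i=1}^k\bigl(2^{a_i}-2\bigr)$ functions one must then \emph{add} the $\sum_{i=1}^k\bigl(2^{a_i}-2\bigr)$ three-valued facets of type (i), which are not equivalent, up to a constant, to any $\{0,1\}$-valued function. Your bookkeeping instead removes only $\sum_{i=1}^k\bigl(2^{a_i}-2\bigr)+2$ ``bad'' configurations and never adds the type (i) facets; the final formula comes out right only because the missing factor of $2$ and the missing addition cancel --- precisely the reconciliation you admit you have not carried out. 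With the bad set pinned down as above, the total $2^{\sum_{i=1}^k a_i}-\sum_{i=1}^k\bigl(2^{a_i}-2\bigr)-2$ follows correctly.
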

\begin{proof}
It is easy to check that any function satisfying the above conditions also satisfies the conditions given in Theorem~\ref{thm:facets} and is thus facet defining.

For the other direction, let $f\colon V\rightarrow \mathbb{Z}$ be a facet defining function. Without loss of generality we may assume that $f(v)=0$ for some vertex $v$ in $A_1$. Then by condition (i) of Theorem~\ref{thm:facets} $f(u)\in [-1,1]$ for all $u\in V\setminus A_1$. 

First suppose that there exists an $i>1$ with $\{-1,1\}\subseteq f(A_i)$. Then, by condition (i) of Theorem~\ref{thm:facets}, $f_{|A_j}=0$ for all $j\neq i$ and then, by condition (ii) of Theorem~\ref{thm:facets}, $f(A_i)=\{-1,1\}$ and thus condition (i) above is satisfied.

Otherwise, without loss of generality we may assume $f$ takes only values $0$ and $1$, as it is not possible that $f$ takes the value $-1$ on one part and $1$ on another by condition (i) of Theorem~\ref{thm:facets}. If $f(A_i)=\{0,1\}=f(A_j)$ for some $i\neq j$ then condition (ii)[b] above is satisfied. Since, by condition (ii) of Theorem~\ref{thm:facets}, it is not possible that $f$ is non-constant on one part $A_i$ and constant and equal on all other parts, we proved the claim.

In order to determine the number of facets we observe that there are $\sum_{i=1}^k (2^{a_i}-2)$ facets of type $(1)$. Furthermore, there are $2\sum_{i=1}^k (2^{a_i}-2)$ functions $f\colon V\rightarrow \mathbb{Z}$ that are non-constant on one part $A_i$ and constant and equal on all other parts. Thus, there are $2^{\sum_{i=1}^k a_i}-\sum_{i=1}^k (2^{a_i}-2)-2$ functions of type (ii)[a] or [b], where we substracted $-2$ to account for the constant functions which are never facet defining.
\end{proof}

\begin{example}\label{ex:facet}
From Proposition~\ref{prop:facetskpartite} the facets of $P_{K_{a,b}}$ can be easily geometrically described. If $f\colon V\rightarrow \Zz$ is a facet defining hyperplane with $f(v_i)=0$ for all $i$ and $f(w_j)=1$ for all $1\leq j\leq b_1$ and $f(w_j)=-1$ for all $b_1< j\leq b$, then the corresponding facet is
\[
\conv \left( \{e_{w_j}-e_{v_i}\colon j\leq b_1\}\cup \{e_{v_i}-e_{w_j}\colon j> b_1\}\right)
\]
which is isomophic to the convex hull of $\Delta_a\times \Delta_{b_1}\times \{0\}$ and $-\Delta_a\times \{0\}\times -\Delta_{b_2}$ under the isomorphism defined by $e_{v_i}\mapsto -e_{v_i}$ and $e_{w_j}\mapsto e_{w_j}$ for all $i,j$. Here, $b_1+b_2=b$ and $\Delta_a$ denotes the standard simplex on $a$ vertices. In particular, if $b=b_1$ then the facet is isomorphic to a product of two standard simplices.
\end{example}
\begin{remark}
Interestingly, for complete bipartite graph only facets of type (i) from Proposition \ref{prop:facetskpartite} appear, while for complete graph only facets of type (ii)[a]. 
\end{remark}

\subsection{Gr\"obner basis}\label{sec:Groebner}
In this section we provide a Gr\"obner basis for the toric ideal associated to $P_G$ and study the associated induced triangulation. For each edge $e$ of a simple graph $G$ we consider both oriented versions $e^+$ and $e^-$ and associate two variables $x_e,y_e$, one for each possible orientation. 
Since $P_G$ is reflexive and terminal, we can naturally identify $K[\{x_{\alpha}\colon \alpha \in P_G\cap \mathbb{Z}^V\}]$ with $K[\{x_e,y_e\}_{e\in E}\cup \{z\}]$, where $z$ is associated to the origin. 

In order to simplify notation, in the following, for any oriented edge $e$, we denote by $p_e$ the corresponding variable, i.e.~$p_e=x_e$ or $p_e=y_e$ depending on the orientation. We also set $q_e$ to be equal to the variable with the opposite orientation, i.e.~$\{p_e,q_e\}=\{x_e,y_e\}$.

\begin{proposition}\label{prop:GB}
Let $z< x_{e_1}<y_{e_1}<\dots<x_{e_k}<y_{e_k}$ be an order on the edges. Then the following collection of three types of binomials forms a Gr\"obner basis of the toric ideal of $P_G$ with respect to the degrevlex order:
\begin{enumerate}
\item For every $2k$-cycle $C$, with fixed orientation, and any $k$-element subset $I$ of edges of $C$ not containing the smallest edge  
$$\prod_{e\in I}p_e-\prod_{e\in C\setminus I} q_e.$$
\item For every $(2k+1)$-cycle $C$, with fixed orientation, and any $(k+1)$-element subset $I$ of edges of $C$ 
$$\prod_{e\in I}p_e-z\prod_{e\in C\setminus I}q_e.$$
\item For any edge $e$ $$x_ey_e-z^2 \, .$$
\end{enumerate}
The leading monomial is always chosen to have positive sign.
\end{proposition}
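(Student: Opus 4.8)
The plan is to proceed in three stages: first identify the toric ideal concretely, second verify that the listed binomials lie in $I_{P_G}$ and that the indicated leading monomials are correct under degrevlex, and third prove that these leading monomials generate the initial ideal. For the first stage, recall that the lattice points of $P_G$ are exactly $0$ together with $e_u - e_v$ for each oriented edge $e = (u,v)$; so $K[P_G]$ is generated by $z = s$ and by $p_e s = t_u t_v^{-1} s$ for each oriented edge. The toric ideal $I_{P_G}$ is the kernel of $x_e, y_e, z \mapsto$ these monomials, and it is the lattice ideal of the integer kernel of the corresponding matrix: a binomial $\prod p_e^{a_e} z^{a_0} - \prod p_e^{b_e} z^{b_0}$ lies in $I_{P_G}$ iff the two monomials have equal total degree and the signed sum of the edge vectors $e_u - e_v$ on each side agree. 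Checking that the three families lie in $I_{P_G}$ is then routine: for a $2k$-cycle the telescoping sum of $\pm(e_u - e_v)$ around the cycle is $0$, matching the two sides (each of degree $k$); for a $(2k+1)$-cycle the signed sum around the cycle telescopes to $0$ as well, and the degrees are $k+1$ on each side after the $z$ on the right compensates; and $x_e y_e - z^2$ corresponds to $(e_u - e_v) + (e_v - e_u) = 0 = 0 + 0$.

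For the leading terms under degrevlex with $z < x_{e_1} < y_{e_1} < \cdots < x_{e_k} < y_{e_k}$: in family (3), $x_e y_e$ has degree $2$ versus $z^2$ also degree $2$, but $z$ is the smallest variable, so by the degrevlex tie-breaking rule $x_e y_e >_{\mathrm{rev}} z^2$, giving leading monomial $x_e y_e$ — squarefree. In family (2), both monomials $\prod_{e\in I} p_e$ and $z\prod_{e\in C\setminus I} q_e$ have degree $k+1$; since the second one contains the variable $z$ of smallest index and the first does not, degrevlex again picks $\prod_{e\in I} p_e$ as the leading monomial, which is squarefree. In family (1), the two monomials $\prod_{e\in I} p_e$ and $\prod_{e \in C\setminus I} q_e$ both have degree $k$ and involve disjoint variable sets (each edge contributes $p_e$ to one side or $q_e$ to the other); the requirement that $I$ avoid the smallest edge $e_0$ of $C$ means the smallest variable among all $2k$ variables appearing — namely the variable $q_{e_0}$ attached to $e_0$, sitting on the right side — is \emph{in the trailing monomial}, so degrevlex (which prefers the monomial \emph{not} divisible by the smallest occurring variable) selects $\prod_{e\in I} p_e$ as leading. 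So all leading terms are squarefree, and if this really is a Gröbner basis then by the discussion before the proposition the induced triangulation $\Delta_{P_G}$ is unimodular — consistent with \cite{MHNOH}.

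The heart of the matter, and the main obstacle, is the Gröbner-basis property itself. I would prove it by the reduction criterion recalled in the preliminaries: take an arbitrary nonzero $f \in I_{P_G}$ and show it can be reduced modulo the three families, i.e.\ that $\ini_<(f)$ is divisible by one of the listed leading monomials. Equivalently — and this is cleaner — it suffices to show that a monomial $m = \prod p_e^{a_e} z^{a_0}$ is a \emph{standard} monomial (not in the ideal generated by the listed leading terms) only if it is the unique minimal monomial in its fiber $\pi^{-1}(\pi(m))$; then the listed leading monomials already cut out the full initial ideal. Not being divisible by any $x_e y_e$ means $m$ uses at most one orientation of each edge, so the support of $m$ is an oriented subgraph $H$ of $G$ with a multiplicity $a_e$ on each edge, plus a power of $z$. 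Not being divisible by any family-(2) leading term means $H$ contains no odd cycle $C$ with a suitable $(k+1)$-subset selected — and not being divisible by any family-(1) leading term rules out even cycles $C$ with a $k$-subset avoiding the smallest edge. The combinatorial claim to nail down is: if the oriented multigraph $H$ (equivalently the vector $(a_e)$) admits \emph{any} nontrivial relation in $I_{P_G}$ — i.e.\ $(a_e)$ is not the lex-least representative of its class modulo the lattice of cycle relations — then $H$ must contain one of these forbidden configurations, so $m$ is reducible. This amounts to a careful case analysis on cycles in $H$: a cycle of even length $2k$ gives a relation swapping the roles of a $k$-subset with its complement, and one can always choose the $k$-subset to exclude the globally smallest edge of the cycle (so the leading term is a standard-looking monomial we can reduce); a cycle of odd length $2k+1$, possibly after pairing with a copy of $z$ coming either from the actual $z$-power in $m$ or from a second odd cycle (two odd cycles combine, via an even closed walk, into even-cycle relations or into a $z^2$ relation reducible by family (3)), produces a family-(2) reduction. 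Making this exhaustive — in particular handling the interaction of several cycles sharing edges, and confirming that the ``smallest edge'' bookkeeping in families (1) and (2) exactly matches what degrevlex demands so that no standard monomial is missed — is the technical crux; everything else is bookkeeping with the lattice-ideal description.
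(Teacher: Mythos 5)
Your preparatory work is fine: the lattice-ideal description of $I_{P_G}$, the verification that the three families lie in the ideal, and the identification of the degrevlex leading monomials (including the role of the excluded smallest edge in family (1)) are all correct, and indeed more explicit than what the paper records. But the proof has a genuine gap exactly where you say it does: the claim that every monomial which is not minimal in its fiber is divisible by one of the listed leading terms is the \emph{entire} content of the proposition, and you leave it as an acknowledged ``technical crux'' with only a gesture toward ``a careful case analysis on cycles in $H$.'' As stated, that plan is also somewhat misdirected: the support $H$ of a single monomial $m$ need not contain any relevant cycle by itself; the witnesses for reducibility are cycles of $G$ whose edges are split between the support of $m$ and the support of a competing monomial $m'$ in the same fiber (with orientations reversed), so an argument confined to ``cycles in $H$'' and the lattice of cycle relations does not yet explain how a consistently oriented $k$- or $(k+1)$-subset ends up entirely inside $\mathrm{supp}(m)$, nor how several interacting cycles are handled.

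The paper closes precisely this gap with a short concrete device that your proposal lacks. Given a binomial $m_1-m_2\in I_{P_G}$ (one may assume the monomials coprime and, by family (3), that neither support contains a directed $2$-cycle), encode $m_1,m_2$ as directed multigraphs $G_1,G_2$ and reverse all orientations of $G_2$ to get $G_2'$. Membership in the ideal forces every vertex of $G_1\cup G_2'$ to have equal in- and out-degree, so this union contains a directed cycle $C$. Writing $a=|C\cap G_1|\ge b=|C\cap G_2'|$, a majority-and-parity count finishes: if $|C|=2k$ and $a>b$ then $a\ge k+1$ and the $k$ largest edges of $C\cap G_1$ avoid the smallest edge of $C$, so the corresponding family-(1) leading term divides $m_1$; if $a=b$ one may assume the smallest edge of $C$ lies in $G_2'$ and take $I=C\cap G_1$; if $|C|=2k+1$ then $a\ge k+1$ and any $k+1$ edges of $C\cap G_1$ give a family-(2) leading term dividing $m_1$. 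This is the missing argument that your ``exhaustive case analysis'' would have to reproduce; without it (or an equivalent), the proposal establishes only that the listed binomials lie in $I_{P_G}$ with squarefree leading terms, not that they form a Gr\"obner basis.
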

\begin{proof}
It is enough to prove that for any binomial $m_1-m_2$ in the toric ideal of $P_G$ on of the monomials $m_1$ or $m_1$ is divisible by the leading monomial of one of the binomials present above (see, e.g., \cite{Stks}). Both monomials can be represented by directed subgraphs $G_1$ and $G_2$ of $G$ in a canonical way, namely $e^+$ and $e^-$ are edges in the graph if and only if $x_e$ or, respectively, $y_e$ are present in the monomial. We may assume that neither $G_1$ nor $G_2$ has a directed cycle of length two since otherwise this monomial is divisible by $x_e y_e$ for some edge $e$. Since $m_1-m_2$ is contained in the toric ideal of $P_G$ the graphs $G_1$ and $G_2$ have the same difference of in and out degrees at every vertex. Let $G_2'$ be the graph obtained from $G_2$ by inverting all edge orientations. Since in $G_1\cup G_2'$ the in degree equals the out degree at every vertex, we can find an Euler path, that is, a directed closed path using every edge exactly once. In particular, we find a cycle $C$ in $G_1\cup G_2'$. Let $a$ be the number of edges of $C\cap G_1$ and $b$ be the number of edges of $C\cap G_2'$. Without loss of generality $a\geq b$. 

First, suppose that $a+b=2k$, $k>1$. If $a>b$ consider the set consisting of the $k$ largest edges of $C\cap G_1$. Then the leading term of the corresponding binomial in (1) divides $m_1$. If $a=b$ we may assume without loss of generality that the smallest edge of $C$ belongs to $G_2$ and proceed as before.
If $a+b=2k+1$, then since $a>b$, the leading term of the binomial in (2) corresponding to $(k+1)$ directed edges in $G_1$ divides $m_1$.
\end{proof}

While the Gr\"obner basis obtained in Proposition~\ref{prop:GB} is in general not \textit{reduced}, an explicit construction of a reduced Gr\"obner basis for $K_{a,b}$ was obtained in \cite{Higashitani2017interlacing}. For edge polytopes a Gr\"obner basis was obtained by Ohsugi and Hibi~\cite{ohsugi1998normal}.
\begin{theorem}[\cite{Higashitani2017interlacing}]\label{thm:groebnerKab}
Let $K_{a,b}$ be a complete bipartite graph with vertex set $\{v_1,\ldots, v_a,w_1,\ldots, w_b\}$ and edge set $\{v_iw_j\colon 1\leq i\leq a, 1\leq j\leq b\}$. Let $e_{ij}$ be a variable associated to the oriented edge $(v_i,w_j)$ and $f_{ij}$ be the variable associated to $(w_j,v_i)$. Let $e_{ij}<e_{i'j'}$ and $f_{ij}<f_{i'j'}$ whenever $i<i'$, or $i=i'$ and $j<j'$ 
and $e_{ij}<f_{i'j'}$ for any choice of $i,j,i',j'$. Then the following are the initial terms of a reduced Gr\"obner basis of the toric ideal associated to $P_{K_{a,b}}$ with respect to the degrevlex order.
\begin{eqnarray}
e_{ij}f_{ij} && \text{ for all }i,j\ \label{gb:1} \\
e_{ij}e_{i'j'} \text{ and } f_{ij}f_{i'j'} && \text{ whenever }i<i'\text{ and }j>j' \ \label{gb:2} \\
e_{ij}f_{i'j} \text{ and } f_{ji}e_{ji'} && \text{ for all } j\neq 1\ \label{gb:3}
\end{eqnarray}
In particular, since all initial terms are quadratic and square free the induced triangulation of $\partial P_{K_{a,b}}$ is unimodular and flag.
\end{theorem}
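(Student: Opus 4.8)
The plan is to prove that the binomials listed in Theorem~\ref{thm:groebnerKab} constitute a Gröbner basis by the standard route: starting from the (non-reduced) Gröbner basis already furnished by Proposition~\ref{prop:GB} specialised to the complete bipartite graph, I would show that every binomial in that basis reduces to zero modulo the quadratic binomials~\eqref{gb:1}--\eqref{gb:3}, which together with the fact that these quadratics lie in the toric ideal $I_{P_{K_{a,b}}}$ gives the claim (recall the reduction criterion stated in the preliminaries: a set of polynomials of $I$ is a Gröbner basis iff every $P\in I$ reduces to $0$). First I would fix the monomial order from the statement and verify, using the $i<i'$, $j<j'$ lexicographic-type rules and the convention $e<f$, that the displayed monomials are indeed the initial terms of honest binomials in $I_{P_{K_{a,b}}}$ — for instance $e_{ij}e_{i'j'}-e_{ij'}e_{i'j}$ for $i<i'$, $j>j'$, and $e_{ij}f_{i'j}-e_{i'j}f_{ij}$ (equivalently $-z\cdot(\text{something})$ after using~\eqref{gb:1}) — and confirm the leading term is as claimed.

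The heart of the argument is the reduction step. Take an arbitrary cycle binomial from Proposition~\ref{prop:GB}; in $K_{a,b}$ every cycle has even length $2k$, so only type (1) and the edge relations $x_ey_e-z^2$ occur (the edge relations are exactly~\eqref{gb:1}). For a $2k$-cycle $C$ alternating between the $v$-side and the $w$-side, with vertices $v_{i_1},w_{j_1},v_{i_2},w_{j_2},\dots$, the associated binomial is $\prod_{r}e_{i_rj_r}-\prod_r f_{i_{r+1}j_r}$ (indices cyclic) for a suitable choice of $I$, and its leading monomial is a product of $k$ variables. I would argue that if this leading monomial is not already divisible by one of~\eqref{gb:1}--\eqref{gb:3}, then the cycle must be "sorted" in a strong sense — consecutive indices along the cycle cannot form an inversion of the type forbidden by~\eqref{gb:2}, and the all-$e$ or all-$f$ pattern forbidden by~\eqref{gb:3} cannot occur — and a short combinatorial argument shows no such cycle of length $\geq 4$ exists (a $4$-cycle $v_i,w_j,v_{i'},w_{j'}$ already forces an inversion or the pattern in~\eqref{gb:3}). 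Hence every such leading monomial is divisible by one of the quadratics, i.e. the cycle binomials reduce, and since the quadratics are in $I_{P_{K_{a,b}}}$ we are done proving the Gröbner basis property. Showing the basis is \emph{reduced} is then a routine check that no initial term of~\eqref{gb:1}--\eqref{gb:3} divides a term of another of these binomials and that the trailing terms are themselves reduced.

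The final sentence — unimodularity and flagness of the induced triangulation of $\partial P_{K_{a,b}}$ — follows formally once the Gröbner basis is established: all initial terms $e_{ij}f_{ij}$, $e_{ij}e_{i'j'}$, $f_{ij}f_{i'j'}$, $e_{ij}f_{i'j}$, $f_{ji}e_{ji'}$ are squarefree, so by the criterion recalled in Section~\ref{sec:prelim} (via \cite[Corollary 8.9]{Stks}) the regular triangulation $\Delta_{P_{K_{a,b}}}$ is unimodular; and all initial terms are quadratic, so every minimal non-face is an edge, which is precisely the definition of flag. Since $P_{K_{a,b}}$ is reflexive, this unimodular triangulation restricts to a unimodular, flag triangulation of the boundary, as asserted.

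I expect the main obstacle to be the combinatorial bookkeeping in the reduction step: one must handle the cyclic choice of the subset $I$ in Proposition~\ref{prop:GB}, keep track of which oriented edges become $e$'s and which become $f$'s, correctly apply~\eqref{gb:1} to swap an $e_{ij}f_{ij}$ pair for $z^2$ when a cycle uses an edge in both orientations, and verify in all cases that the resulting leading monomial is strictly divisible by one of the listed quadratics — the case analysis (type~\eqref{gb:2} inversion vs. type~\eqref{gb:3} pattern vs. a cancelling $e_{ij}f_{ij}$) is where care is needed. Everything else is either cited (Proposition~\ref{prop:GB}, the reduction criterion, \cite[Corollary 8.9]{Stks}) or a mechanical verification.
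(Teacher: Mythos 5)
First, a point of comparison: the paper does not prove Theorem~\ref{thm:groebnerKab} at all --- it is quoted from \cite{Higashitani2017interlacing}, and only the final sentence (squarefree quadratic initial terms give a unimodular flag triangulation, which passes to $\partial P_{K_{a,b}}$ by reflexivity) is justified via the preliminaries. So your proposal attempts a proof the paper omits, and judged on its own terms it has a genuine gap.

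The gap is in the certification step. You propose to verify the Gr\"obner basis property by reducing only the binomials of Proposition~\ref{prop:GB} modulo the quadratics with initial terms \eqref{gb:1}--\eqref{gb:3}. But Proposition~\ref{prop:GB} is a Gr\"obner basis with respect to a \emph{different} degrevlex order (there the two variables of each edge are adjacent, $z<x_{e_1}<y_{e_1}<\cdots$), whereas Theorem~\ref{thm:groebnerKab} places all $e_{ij}$ below all $f_{i'j'}$; the initial terms, and the Gr\"obner property itself, are not preserved under a change of order. With respect to the new order the set from Proposition~\ref{prop:GB} is merely a generating set of $I_{P_{K_{a,b}}}$, and showing that each of its finitely many elements reduces to zero modulo your quadratics only proves that the quadratics generate the ideal --- it does not show that their initial terms generate $\ini_<(I_{P_{K_{a,b}}})$ for the stated order. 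The criterion you quote (``every $P\in I$ reduces to zero'') genuinely requires an arbitrary element of the ideal, and reducing a generating set is in general strictly weaker than the Gr\"obner property (that is exactly the difference between generators and a Gr\"obner basis). To close the gap you must either rerun the Euler-path/cycle argument of Proposition~\ref{prop:GB} directly for the new order, showing that the new-order leading monomial of an \emph{arbitrary} binomial of $I_{P_{K_{a,b}}}$ is divisible by one of \eqref{gb:1}--\eqref{gb:3} (your $4$-cycle check is only the first case; general $2k$-cycles and pairs of monomials whose supports overlap need the full bookkeeping, as you partly anticipate), or verify Buchberger's criterion for the quadratics, or simply cite \cite{Higashitani2017interlacing} as the paper does. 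Your final paragraph is fine: squarefreeness of the initial terms gives unimodularity via \cite[Corollary 8.9]{Stks}, quadratic initial terms give flagness, and the statement for the boundary follows since $P_{K_{a,b}}$ is reflexive and terminal; likewise the claim that reducedness is a finite syntactic check is acceptable once the Gr\"obner property is actually established.
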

%
%


\section{Complete bipartite graphs}\label{sec:bipartite}
This section is primarily dedicated to the case of complete bipartite graphs however a few statements generalize to arbitrary graphs. Let $h^\ast _{a,b}(t)$ denote the $h^*$-polynomial of $P_{K_{a+1,b+1}}$. The goal of this section is to study arithmetic properties of $h^\ast _{a,b}(t)$. The first main result is the following simple expression for $h^\ast _{a,b}(t)$. 
\begin{theorem}\label{thm:main1}
For all $a,b\geq 0$
\begin{equation}\label{eq:K_ab1}
h^\ast _{a,b}(t) \ = \ \sum_{i=0}^{\min(a,b)}\binom{2i}{i}\binom{a}{i}\binom{b}{i}t^i(1+t)^{a+b+1-2i}.
\end{equation}
In particular, $h^\ast _{a,b}(t)$ is $\gamma$-positive.
\end{theorem}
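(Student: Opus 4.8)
The plan is to prove the formula for $h^\ast_{a,b}(t)$ by exhibiting a unimodular triangulation of $P_{K_{a+1,b+1}}$ together with a suitable point $q$ in general position, and then counting, via Proposition~\ref{prop:halfopendecomp}, the maximal simplices of the induced triangulation according to the number of their facets visible from $q$. Concretely, I would use the regular unimodular flag triangulation coming from the reduced Gr\"obner basis of Theorem~\ref{thm:groebnerKab}: its maximal simplices correspond to the squarefree standard monomials, i.e.\ to subsets $S$ of the lattice points (oriented edges plus the origin) whose product avoids $\sqrt{\ini_<(I_{P_{K_{a+1,b+1}}})}$, and these are in bijection with certain directed subgraphs of $K_{a+1,b+1}$ that are forests of a prescribed combinatorial shape (no length-two cycles by \eqref{gb:1}, a monotonicity condition on indices by \eqref{gb:2}, and a ``rooted at the first vertex'' condition by \eqref{gb:3}).

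The key step is then a bijective/double-counting argument: choosing $q$ to be the barycenter (or a generic perturbation of the origin), the number of facets of a maximal simplex $P_S$ visible from $q$ equals $(a+b+1) - (\text{something})$, and one checks that the statistic distributing these simplices is governed by the ``number of times the directed forest leaves the root vertices on both sides.'' I would match the right-hand side of \eqref{eq:K_ab1}: the factor $\binom{a}{i}\binom{b}{i}$ chooses $i$ vertices on each side that are ``active,'' $\binom{2i}{i}$ counts the ways to interleave a monotone lattice-path-like structure among them (this is where the central binomial coefficient naturally appears, as the number of monotone lattice paths / the number of standard fillings), and $t^i(1+t)^{a+b+1-2i}$ records that $i$ of the $a+b+1$ ``free'' directions are forced while the remaining $a+b+1-2i$ each contribute a factor $(1+t)$ from an independent binary choice of orientation. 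Equivalently, I expect a cleaner route: expand $(1+t)^{a+b+1-2i}$ and compare coefficients of $t^j$ on both sides, reducing the identity to a Vandermonde-type binomial identity $\sum_i \binom{2i}{i}\binom{a}{i}\binom{b}{i}\binom{a+b+1-2i}{j-i}$ equaling the number of maximal simplices with exactly $j$ visible facets, which can be established combinatorially from the Gr\"obner description.

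An alternative, and possibly the path the paper actually takes given the later sections it advertises (Sections~\ref{sec:goodcolorings}, \ref{sec:half}, \ref{sec:proof}), is to first translate $h^\ast_{a,b}(t)$ into a purely graph-theoretic generating function over certain edge-colorings or over a custom half-open triangulation built by hand (not the Gr\"obner one), where the statistic is transparent, and then evaluate that generating function in closed form. I would set up the half-open decomposition so that each maximal cell contributes a monomial $t^{|I_q(P_j)|}$, organize the cells by a pair of ``pivot'' structures on the two vertex classes of size $a+1$ and $b+1$, and sum; the bookkeeping should collapse to the stated sum over $i = \min(a,b)$ after recognizing $\binom{2i}{i}$ as counting the matchings/linear orders that glue the two sides. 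The $\gamma$-positivity is then immediate and requires no extra work: the displayed formula is \emph{already} written in the form $\sum_i \gamma_i t^i(1+t)^{d-2i}$ with $d = a+b+1$ and $\gamma_i = \binom{2i}{i}\binom{a}{i}\binom{b}{i} \geq 0$, so once \eqref{eq:K_ab1} is proved the ``in particular'' clause follows directly from the definition of $\gamma$-positivity in Section~\ref{sec:prelim}.

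The main obstacle will be the combinatorial heart of the first step: correctly identifying the statistic ``number of visible facets'' on each maximal simplex of the chosen triangulation and showing its distribution is exactly the claimed sum. This requires (a) a clean description of the maximal simplices — either the Gr\"obner-basis forests or the ad hoc half-open cells — that is amenable to counting, (b) a correct choice of the generic point $q$ so that visibility translates into a manageable local condition at each simplex, and (c) the final binomial-identity bookkeeping. I expect (a) and (b) to be the genuinely delicate parts, since an unlucky choice of $q$ or an imprecise parametrization of the cells makes the count intractable; the closing identity in (c) should be routine (Vandermonde / generating functions) once the setup is right. The reflexivity-driven palindromicity and the $\gamma$-positivity are then essentially free consequences of the explicit formula.
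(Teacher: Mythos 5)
Your high-level plan does match the paper's actual route (Gr\"obner triangulation from Theorem~\ref{thm:groebnerKab}, a half-open decomposition via Proposition~\ref{prop:halfopendecomp}, then binomial bookkeeping, with $\gamma$-positivity indeed being free once \eqref{eq:K_ab1} is established), but what you have written is an outline whose substantive content is exactly the part you defer. Three things are missing and none of them is routine. First, the clean description of the maximal cells: the paper proves (Proposition~\ref{prop:treedescription}) that they are precisely the directed spanning trees $T=(T^{\uparrow},T^{\downarrow})$ of $K_{a+1,b+1}$ with an edge between $v_0$ and $w_0$, with $T^{\uparrow}$, $T^{\downarrow}$ planar, and with $T^{\uparrow}\cap T^{\downarrow}$ equal to $\{v_0\}$ or $\{w_0\}$; your ``forests of a prescribed combinatorial shape'' is not yet a usable parametrization. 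Second, and most importantly, your concrete suggestion for $q$ (barycenter or a generic perturbation of the origin) is a misstep: the origin is a vertex of every maximal cell of this triangulation, and for such a $q$ the visibility statistic has no tractable description, which is precisely the failure mode you yourself anticipate. The paper's choice is a specially designed point with widely separated coordinates, $q_{v_i}=-t^{a+1-i}$, $q_{w_j}=-t^{a+b+2-j}$ for $t\gg 0$ (shifted into the hyperplane $\sum_v x_v=0$), for which a facet of the cell of $T$ is visible exactly when the deleted edge of $T$ is ``ingoing''; this yields $h^\ast_i=\#\{T\in\mathcal{T}:\ T \text{ has } i \text{ ingoing edges}\}$ (Proposition~\ref{prop:ingoing}), a statistic that by Lemma~\ref{lem:ingoingnumbers} depends only on $|A^{\downarrow}|$ and $|B^{\uparrow}|$.

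Third, the closing count is not a one-line Vandermonde identity against the right-hand side of \eqref{eq:K_ab1}: the paper splits $\mathcal{T}$ into three types according to how $T^{\uparrow}$ and $T^{\downarrow}$ meet, counts each type using the fact that the number of planar spanning trees of $K_{a+1,b+1}$ is $\binom{a+b}{b}$ (Corollary~\ref{cor:spanning}), and arrives at the double sum $\sum_{i,j}\binom{a}{i}\binom{b}{j}\binom{a-i+j}{j}\binom{b+i-j}{i}(t^{i+j}+t^{i+j+1})$; identifying this with $\sum_i\binom{2i}{i}\binom{a}{i}\binom{b}{i}t^i(1+t)^{a+b+1-2i}$ is itself a separate combinatorial argument, done in Proposition~\ref{prop:K_ab2} by double-counting ``good'' colorings of $A\sqcup B$ (this is where $\binom{2i}{i}$ really comes from, as the choice of red elements among $2i$ red/green ones, not from lattice paths or matchings). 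So the gap is genuine: without (a) the tree characterization, (b) the correct $q$ making visibility equal to the ingoing-edge statistic, and (c) the type-by-type enumeration plus the coloring identity, the proposal does not yet prove the theorem, even though it points in the right direction.
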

In Sections~\ref{sec:goodcolorings}, \ref{sec:facet_desc} and \ref{sec:half}, we employ methods from geometric, enumerative and bijective combinatorics to describe $h^\ast _{a,b}(t)$. These are essential in the proof of Theorem~\ref{thm:main1} in Section~\ref{sec:proof}. In Sections~\ref{sec:roots}, \ref{sec:Nevo} and \ref{sec:recursion} we study arithmetic properties of $h^\ast _{a,b}(t)$.

\subsection{Colorings}\label{sec:goodcolorings}
In this section we give a combinatorial interpretation for the right hand side of \ref{eq:K_ab1}. At the same time we give an alternative expression which serves as a first step towards a proof of Theorem~\ref{thm:main1}.

We consider disjoint sets $A$ and $B$ with $|A|=a$ and $|B|=b$, and  colorings of $A\sqcup B$, that is, maps $c\colon A\sqcup B \rightarrow \{R,G,W,B\}$, where $R,G,W$ and $B$ stand for \textit{red, green, white} and \textit{black}, respectively. Let $g(c):=|c^{-1}(G)|$ denote the number of green elements colored by $c$, and let $r(c),w(c)$ and $b(c)$ be defined analogously. A coloring is called \textbf{good} if the number of red elements in $A$ equals the number of green elements in $B$, and at the same time, the number of green elements in $A$ equals the number of red elements in $B$. In particular, $g(c)=r(c)$ for every good coloring $c$.
\begin{proposition}\label{prop:K_ab2}
\begin{equation}\label{eq:K_ab2}\begin{split}
(1+t)\sum_{ \text{ good }c}t^{g(c)+w(c)}&=\sum_{i=0}^{\min(a,b)}\binom{2i}{i}\binom{a}{i}\binom{b}{i}t^i(1+t)^{a+b+1-2i} \\ &=  (1+t)\sum_{i=0}^a\sum_{j=0}^b\binom{a}{i}\binom{b}{j}\binom{a-i+j}{j}\binom{b+i-j}{i}t^{i+j}. 
\end{split}\end{equation}
where the first sum is taken over all good colorings of $A\sqcup B$.
\end{proposition}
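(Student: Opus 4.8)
The plan is to prove the two stated equalities separately, handling the middle$=$right identity first (a pure binomial identity) and the left$=$right identity second (a combinatorial/generating-function argument via good colorings). For the identity between the middle sum and the rightmost sum, after cancelling the common factor $(1+t)$ I would expand $(1+t)^{a+b+1-2i}$ via the binomial theorem and compare coefficients of $t^n$ on both sides; equivalently, I would recognize $\binom{a-i+j}{j}\binom{b+i-j}{i}$ as counting lattice paths / as a Vandermonde-type convolution and collapse the double sum over $i,j$ with $i+j=n$ into the single sum over the new index. Concretely, fixing the total degree $n=i+j$, the right-hand coefficient is $\sum_{i+j=n}\binom ai\binph bj\binom{a-i+j}{j}\binom{b+i-j}{i}$, and the claim is that this equals $\sum_k \binom{2k}k\binom ak\binom bk\binom{a+b+1-2k}{n-k}$. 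This is a finite hypergeometric identity that can be verified either by Zeilberger's algorithm or, more in the spirit of the paper, by a direct combinatorial bijection; I would prefer to phrase it so that it follows from the left$=$right equality, so that only one genuine argument is needed.

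For the left$=$right equality I would argue combinatorially on good colorings. First I would rewrite $(1+t)\sum_{\text{good }c}t^{g(c)+w(c)}$ by summing over the colorings directly: a good coloring is determined by choosing, in $A$, which $i$ elements are red, which $g=i$ are green (here using $g(c)=r(c)$), which are white, the rest black; and symmetrically in $B$ with the green/red counts swapped. Grouping colorings by $g(c)=r(c)=:\ell$, $w(c)=:m$ and the analogous data on the $B$-side, the weight $t^{g(c)+w(c)}$ only sees the $A$-side statistics, so the sum over the $B$-side degrees of freedom contributes a pure count; extracting it gives exactly a product of four binomial coefficients, and after reindexing one recovers $\sum_i\sum_j\binom ai\binom bj\binom{a-i+j}{j}\binom{b+i-j}{i}t^{i+j}$, which is the rightmost expression. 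The extra factor $(1+t)$ on the left should be matched by the $(1+t)$ on the right; I would keep it untouched on both sides throughout.

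The remaining step — and the one I expect to be the main obstacle — is the middle$=$right binomial identity, i.e. showing $\sum_i\sum_j\binom ai\binom bj\binom{a-i+j}{j}\binom{b+i-j}{i}t^{i+j}=\sum_{i=0}^{\min(a,b)}\binom{2i}i\binom ai\binom bi t^i(1+t)^{a+b-2i}$. The cleanest route is a sign-reversing-free bijective one: interpret $\binom{a}{i}\binom{a-i+j}{j}$ (and the mirror pair) as choosing two disjoint-then-overlapping subsets, reorganize the data as "a common core of size $k$ counted by $\binom ak\binom bk\binom{2k}k$ together with $a+b-2k$ free binary choices,'' and read off the factor $(1+t)^{a+b-2k}$. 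If a slick bijection proves elusive I would fall back on the snake-oil method: fix the coefficient of $t^n$, sum the $\binom ai\binom bj\binom{a-i+j}{j}\binom{b+i-j}{i}$ over $i+j=n$ using the Chu--Vandermonde identity twice (first on the $j$-sum for fixed $i$, then on the resulting $i$-sum), and check it equals $\sum_k\binom{2k}k\binom ak\binom bk\binom{a+b+1-2k}{n-k}$. Either way the verification is routine once set up, so I would present only the bijective version in full and relegate the computational check to a remark. Finally, $\gamma$-positivity of $h^\ast_{a,b}(t)$ is then immediate from the first (middle) form, since all coefficients $\binom{2i}i\binom ai\binom bi$ are nonnegative and the degree is $a+b+1$ with $i$ ranging up to $\min(a,b)\le\lfloor(a+b+1)/2\rfloor$.
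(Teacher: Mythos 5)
The central gap is that the middle expression is never actually established. Your plan defers the identity between the $\gamma$-form $\sum_i\binom{2i}{i}\binom{a}{i}\binom{b}{i}t^i(1+t)^{a+b+1-2i}$ and the double sum to a ``routine'' verification (Zeilberger, a double Chu--Vandermonde, or an unspecified bijection) that you do not carry out, and at one point you even propose to let it ``follow from the left $=$ right equality'', which is circular: that equality says nothing about the middle form. Since the $\gamma$-form is exactly the content that the rest of the paper uses, this cannot be relegated to a remark. The paper avoids any hypergeometric computation by counting good colorings a \emph{second} time: goodness forces the number of red-or-green elements of $A$ to equal that of $B$, say $i$, costing $\binom{a}{i}\binom{b}{i}$; the admissible red/green splits (red in $A$ $=$ green in $B$ and green in $A$ $=$ red in $B$) number $\sum_k\binom{i}{k}^2=\binom{2i}{i}$; then $g(c)=g_A+g_B=g_A+r_A=i$ in every case, and distributing white/black over the remaining $a+b-2i$ elements contributes $(1+t)^{a+b-2i}$, so each choice yields $t^i(1+t)^{a+b-2i}$ and the outer factor $(1+t)$ gives the exponent $a+b+1-2i$. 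If you keep your structure instead, you must actually perform the double-Vandermonde (or bijective) verification; as written the key equality is asserted, not proved.

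The left $=$ right sketch also rests on incorrect claims. The weight $t^{g(c)+w(c)}$ does not ``only see the $A$-side statistics'': by definition $g(c)$ and $w(c)$ count green, respectively white, elements of all of $A\sqcup B$. Likewise a good coloring need not have equally many red and green elements inside $A$; goodness equates red in $A$ with green in $B$ and green in $A$ with red in $B$, so the parametrization ``in $A$: $i$ red and $g=i$ green'' misreads the definition. Finally, the step that actually produces $\binom{a-i+j}{j}\binom{b+i-j}{i}$ is missing: the correct statistics are $i=|X|$ and $j=|Y|$, where $X\subseteq A$ and $Y\subseteq B$ are the green-or-white elements (so that the exponent is $i+j=g(c)+w(c)$), and one then encodes simultaneously the reds of $A$ and the whites of $B$ by a $j$-element subset of $(A\setminus X)\cup Y$, and symmetrically the reds of $B$ and whites of $A$ by an $i$-element subset of $(B\setminus Y)\cup X$; this recombination is precisely what ``the $B$-side contributes a pure count'' glosses over, and without it the four-binomial product does not appear.
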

\begin{proof}
We prove that both expressions on the right hand side equal the one on the left hand side.

\textit{First expression:} Let $i$ be the number of elements in $A$ that are either red or green. There are ${a\choose i}$ possibilities of choosing the subset of elements in $A$ that are either green or red. Since we only consider good colorings, there are ${b\choose i}$ possibilities for choosing the subset of all green or red elements in $B$. Among these $2i$ chosen red or green elements in $A\sqcup B$ we have ${2i \choose i }$ possibilities to choose the red elements. It remains to choose $w$ white elements which accounts for ${a+b-2i}\choose w$ possibilities. As $(1+t)^{a+b-2i}=\sum_w {{a+b-2i}\choose w}t^w$ we see that indeed the left hand side equals the first expression on the right hand side.

\textit{Second expression:} Let $X$ be the subset of elements that are either green or white in $A$ and let $Y$ be the subset of green or white elements in $B$. Let $|X|=i$ and $|Y|=j$. Then there are ${a \choose i}{b\choose j}$ possibilities to choose $X$ and $Y$. To determine the red elements in $A$ and the white elements in $B$ we choose a subset $S$ of $(A\setminus X)\cup Y$ of cardinality $j$. We define $S\cap A$ to be the subset of red elements in $A$ and $S\cap B$ to be the subset of white elements in $B$. There are ${{a-i+j}\choose{j}}$ possibilities to choose $S$ and by construction, the number of red elements in $A$ equals the number of green elements in $B$. It remains to choose the red elements in $B$ and, simultaneously, the white elements in $A$ in an analogous way, which accounts for another ${{b-j+i}\choose{i}}$ possibilities.
\end{proof}

\subsection{Triangulation}\label{sec:facet_desc}
Let $\Delta$ be the unimodular triangulation defined by the Gr\"obner basis given in Theorem~\ref{thm:groebnerKab}. 
Every maximal face $\sigma \in \Delta$ in the triangulation corresponds to a directed spanning tree $T(\sigma)$ of the graph in the following way: 
if $e_v -e_w$ is a vertex of the maximal cell, then the directed edge $(w,v)$ is present in $T(\sigma)$. 
Since the non-zero vertices form a maximally linearly independent set, $T(\sigma)$ is a directed spanning tree. 
In the sequel, we think of $K_{a+1,b+1}$ as drawn in the plane in such a way that the vertices lie on two parallel lines and the edges are represented by straight segments connecting its vertices. The vertices on the upper line are labeled by $v_0,v_1,\ldots v_a$ from left to right and the vertices on the lower line are labeled by $w_0,w_1,\ldots,w_b$ from left to right. A spanning tree is called \textbf{planar} if no two of its edges drawn as segments in that way intersect in their interior. Let $T^\uparrow (\sigma)$ denote the edge induced subtree of $T (\sigma)$ consisting of all edges that are directed from the lower towards the upper level of vertices, and, correspondingly, let $T^\downarrow (\sigma)$ denote the edge induced subgraph of downward oriented edges. If there are no upward oriented edges then $T^\uparrow (\sigma):=\{w_0\}$ and, respectively, $T^\downarrow (\sigma):=\{v_0\}$ if there are no downward oriented edges. Let $\mathcal{T}=\{T(\sigma)=(T^\uparrow(\sigma),T^\downarrow (\sigma))\colon \sigma \in \Delta\}$ be the set of all directed spanning trees corresponding to maximal cells in $\Delta$.

The minimal non-faces given by the leading coefficients of the Gr\"obner basis description in Theorem \ref{thm:groebnerKab} correspond to the subgraphs given in Figure \ref{fig:forbidden}. A directed spanning tree of $K_{a+1,b+1}$ is therefore contained in $\mathcal{T}$ if and only if it does not contain any of these subgraphs. The following result characterizes the elements in $\mathcal{T}$.

\begin{proposition}\label{prop:treedescription}
Let $T=(T^\uparrow,T^\downarrow)$ be a directed spanning tree of $K_{a+1,b+1}$. Then $T\in \mathcal{T}$ if and only if
\begin{itemize}
\item[(i)] either $(v_0,w_0)\in T$ or $(w_0,v_0)\in T$, and
\item[(ii)] $T^\uparrow $ and $T^\downarrow $ are planar subtrees, and
\item[(iii)] $T^\uparrow \cap T^\downarrow =\{v_0\}$ or $T^\uparrow \cap T^\downarrow =\{w_0\}$.
\end{itemize}
\end{proposition}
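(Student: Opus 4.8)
The plan is to translate the three forbidden subgraphs from Figure~\ref{fig:forbidden} (coming from the leading monomials \eqref{gb:1}, \eqref{gb:2}, \eqref{gb:3} of Theorem~\ref{thm:groebnerKab}) into the stated structural conditions and vice versa. Recall that a directed spanning tree $T$ of $K_{a+1,b+1}$ lies in $\mathcal{T}$ if and only if it avoids all of these subgraphs as subgraphs. So the proof is a matter of unwinding definitions carefully and matching each of (i), (ii), (iii) with the absence of one family of forbidden patterns.

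\textbf{Key steps.} First I would record, for each type of leading monomial, which directed configuration of edges it forbids: the monomial $e_{ij}f_{ij}$ forbids the directed $2$-cycle on $\{v_i,w_j\}$, i.e.\ both orientations of an edge --- but this is automatically excluded since $T$ is a tree, so it plays no role among spanning trees. The monomials $e_{ij}e_{i'j'}$ and $f_{ij}f_{i'j'}$ for $i<i'$, $j>j'$ forbid two parallel-oriented edges that cross when drawn as segments; since $e$'s are (say) the upward edges and $f$'s the downward edges, this is exactly the statement that $T^\uparrow$ and $T^\downarrow$ are each planar, giving condition~(ii). The monomials $e_{ij}f_{i'j}$ and $f_{ji}e_{ji'}$ for $j\neq 1$ (with our indexing, $j \neq 0$) forbid a vertex other than $w_0$ (resp.\ other than $v_0$) from being incident to both an upward and a downward edge; this says $T^\uparrow$ and $T^\downarrow$ share no vertex except possibly $w_0$ or $v_0$, which together with the fact that $T$ is connected forces condition~(iii), and condition~(i) then records which of the two roots is the shared vertex. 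For the converse direction I would check that any $T=(T^\uparrow,T^\downarrow)$ satisfying (i)--(iii) is indeed a spanning tree (connectedness follows from (i) and (iii): $T^\uparrow$ and $T^\downarrow$ are subtrees glued at a single common vertex, and the remaining edge in (i) connects $v_0$ and $w_0$ if they are not already the common vertex; a count of edges then gives a tree) and that it avoids all forbidden subgraphs (immediate from (ii) and (iii)).

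\textbf{Main obstacle.} The routine part is the dictionary between monomials and directed subgraphs; the one genuinely delicate point is making sure that conditions (i) and (iii) together correctly capture \emph{both} the "shared root" phenomenon and the connectivity of $T$, and in particular handling the degenerate conventions $T^\uparrow=\{w_0\}$ when there are no upward edges and $T^\downarrow=\{v_0\}$ when there are none. One must check that in these boundary cases (i)--(iii) still single out exactly the spanning trees in $\mathcal{T}$: e.g.\ if $T$ has no upward edges then $T=T^\downarrow$ is a planar spanning tree, $v_0$ is its ``root'', and condition~(iii) reads $\{w_0\}\cap T^\downarrow=\{w_0\}$, i.e.\ $w_0\in T^\downarrow$, which holds since $T^\downarrow$ is spanning, while (i) is the requirement that $(w_0,v_0)\in T$ --- consistent with $w_0$ being a leaf attached to $v_0$ in such a tree, which is exactly what avoiding the $f$-type forbidden monomials forces. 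I expect verifying these edge cases, rather than the main argument, to consume most of the write-up.
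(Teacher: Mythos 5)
Your dictionary between the leading monomials in Theorem~\ref{thm:groebnerKab} and the forbidden directed configurations of Figure~\ref{fig:forbidden} is essentially right, and it does yield condition (ii) and the easy implication that a tree satisfying (ii) and (iii) avoids all forbidden configurations. The genuine gap is condition (i). You treat it as bookkeeping (``condition (i) then records which of the two roots is the shared vertex''), but (i) is a substantive assertion: the edge $v_0w_0$, in one of its two orientations, must itself occur in $T$. This is not what (i) says in your reading, and it does not follow merely from $T^\uparrow\cap T^\downarrow\subseteq\{v_0,w_0\}$; it is exactly where the paper spends its effort. The paper's argument is: if $T$ contained no edge between $v_0$ and $w_0$, take the unique path $v_0=p_0p_1\cdots p_m=w_0$ in $T$ (of odd length, by bipartiteness); every interior vertex is neither $v_0$ nor $w_0$, so by the type-\eqref{gb:3} configurations it cannot carry both an in-edge and an out-edge, whence the two path edges at each interior vertex are both upward or both downward, and inductively all path edges have the same type; but then the first edge $v_0w_k$ ($k\geq 1$) and the last edge $v_lw_0$ ($l\geq 1$) are parallel and cross, contradicting \eqref{gb:2}. (One could instead deduce (i) from (ii) and (iii): whichever of $T^\uparrow$, $T^\downarrow$ contains both $v_0$ and $w_0$ is planar, and its edges at $v_0$ and at $w_0$ would cross unless one of them is the edge $v_0w_0$. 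Either way, an argument is required and your outline contains none.)

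Two further points. First, for (iii) your reasoning only gives that $T^\uparrow\cap T^\downarrow$ is nonempty and contained in $\{v_0,w_0\}$; you must still exclude the case $T^\uparrow\cap T^\downarrow=\{v_0,w_0\}$ (the paper invokes \eqref{gb:1} here; spelling it out, one uses the edge from (i) together with a crossing or two-cycle argument). Second, your degenerate-case discussion is off: if $T$ has no upward edges then no $f$-variables occur at all, so the $f$-type monomials force nothing; what forces the edge between $v_0$ and $w_0$ in that case is planarity of the all-downward tree, i.e.\ \eqref{gb:2}, and the edge then has the orientation $(v_0,w_0)$, not $(w_0,v_0)$ as you wrote. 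So the framework of your proposal matches the paper's, but the core of the ``only if'' direction --- the proof of (i) and the exclusion of a doubly shared root in (iii) --- is missing.
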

\begin{proof}
Let $T$ be a directed spanning tree that satisfies (i),(ii) and (iii), then it is easily seen that it does not contain any forbidden subgraph given in Figure~\ref{fig:forbidden}. 

For the other direction we assume that $T=T(\sigma)$ for some $\sigma \in \Delta$:

(i) Suppose there is no edge between $v_0$ and $w_0$ in $T(\sigma)$. Since $T(\sigma)$ is a spanning tree there is a unique path $v_0=p_0p_1\ldots p_m=w_0$ and, since $K_{a+1,b+1}$ is bipartite, its length $m$ is odd. By \eqref{gb:3}, every edge $p_ip_{i+1}$ for all even $i$ has the same orientation. In particular, $p_0p_1$ and $p_{m-1}p_m$ have the same orientation. However, since these two edges have to cross this contradicts condition \eqref{gb:2}.

(ii) Planarity of $T^\uparrow (\sigma)$ and $T^\downarrow (\sigma)$ follows directly from condition \eqref{gb:2}. 

(iii) $T^\uparrow(\sigma)$ and $T^\downarrow (\sigma)$ intersect in a vertex since $T(\sigma)$ is spanning. By condition \eqref{gb:3} the only vertices that can possibly be contained in $T^\uparrow (\sigma)\cap T^\downarrow (\sigma)$ are $v_0$ or $w_0$, but not both by condition \eqref{gb:1}.
\end{proof}

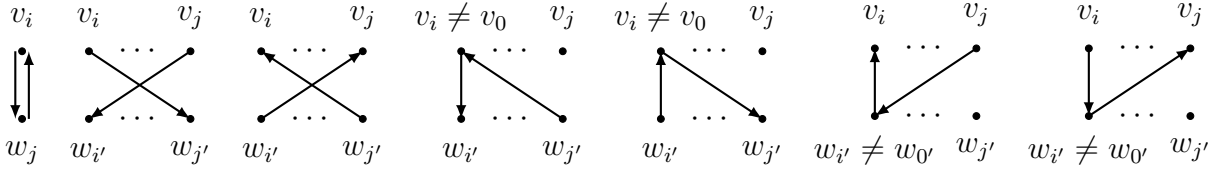
\begin{figure}
\begin{tikzpicture}[scale=0.9]
\filldraw (0,0) circle (0.05cm);

\filldraw (0,1) circle (0.05cm);

\draw node at (0,-0.5) {$w_{j}$};

\draw node at (0,1.5) {$v_{i}$};
\draw[-latex, thick] (0.1,0) -- (0.1,1);
\draw[-latex,thick] (-0.1,1) -- (-0.1,0);
\end{tikzpicture}
\begin{tikzpicture}[scale=0.9]
\filldraw (0,0) circle (0.05cm);
\filldraw (1.5,0) circle (0.05cm);
\filldraw (0,1) circle (0.05cm);
\filldraw (1.5,1) circle (0.05cm);
\draw node at (0.75,0) {$\cdots$};
\draw node at (0.75,1) {$\cdots$};
\draw node at (0,-0.5) {$w_{i'}$};
\draw node at (1.5,-0.5) {$w_{j'}$};
\draw node at (0,1.5) {$v_{i}$};
\draw node at (1.5,1.5) {$v_{j}$};
\draw[-latex, thick] (0,1) -- (1.5,0);
\draw[-latex,thick] (1.5,1) -- (0,0);
\end{tikzpicture}
\begin{tikzpicture}[scale=0.9]
\filldraw (0,0) circle (0.05cm);
\filldraw (1.5,0) circle (0.05cm);
\filldraw (0,1) circle (0.05cm);
\filldraw (1.5,1) circle (0.05cm);
\draw node at (0.75,0) {$\cdots$};
\draw node at (0.75,1) {$\cdots$};
\draw node at (0,-0.5) {$w_{i'}$};
\draw node at (1.5,-0.5) {$w_{j'}$};
\draw node at (0,1.5) {$v_{i}$};
\draw node at (1.5,1.5) {$v_{j}$};
\draw[latex-,thick] (0,1) -- (1.5,0);
\draw[latex-,thick] (1.5,1) -- (0,0);
\end{tikzpicture}
\begin{tikzpicture}[scale=0.9]
\filldraw (0,0) circle (0.05cm);
\filldraw (1.5,0) circle (0.05cm);
\filldraw (0,1) circle (0.05cm);
\filldraw (1.5,1) circle (0.05cm);
\draw node at (0.75,0) {$\cdots$};
\draw node at (0.75,1) {$\cdots$};
\draw node at (0,-0.5) {$w_{i'}$};
\draw node at (1.5,-0.5) {$w_{j'}$};
\draw node at (0,1.5) {$v_{i}\neq v_0$};
\draw node at (1.5,1.5) {$v_{j}$};
\draw[latex-,thick] (0,1) -- (1.5,0);
\draw[-latex,thick] (0,1) -- (0,0);
\end{tikzpicture}
\begin{tikzpicture}[scale=0.9]
\filldraw (0,0) circle (0.05cm);
\filldraw (1.5,0) circle (0.05cm);
\filldraw (0,1) circle (0.05cm);
\filldraw (1.5,1) circle (0.05cm);
\draw node at (0.75,0) {$\cdots$};
\draw node at (0.75,1) {$\cdots$};
\draw node at (0,-0.5) {$w_{i'}$};
\draw node at (1.5,-0.5) {$w_{j'}$};
\draw node at (0,1.5) {$v_{i}\neq v_0$};
\draw node at (1.5,1.5) {$v_{j}$};
\draw[-latex,thick] (0,1) -- (1.5,0);
\draw[latex-,thick] (0,1) -- (0,0);
\end{tikzpicture}
\begin{tikzpicture}[scale=0.9]
\filldraw (0,0) circle (0.05cm);
\filldraw (1.5,0) circle (0.05cm);
\filldraw (0,1) circle (0.05cm);
\filldraw (1.5,1) circle (0.05cm);
\draw node at (0.75,0) {$\cdots$};
\draw node at (0.75,1) {$\cdots$};
\draw node at (0,-0.5) {$w_{i'}\neq w_{0'}$};
\draw node at (1.5,-0.5) {$w_{j'}$};
\draw node at (0,1.5) {$v_{i}$};
\draw node at (1.5,1.5) {$v_{j}$};
\draw[latex-,thick] (0,0) -- (1.5,1);
\draw[latex-,thick] (0,1) -- (0,0);
\end{tikzpicture}
\begin{tikzpicture}[scale=0.9]
\filldraw (0,0) circle (0.05cm);
\filldraw (1.5,0) circle (0.05cm);
\filldraw (0,1) circle (0.05cm);
\filldraw (1.5,1) circle (0.05cm);
\draw node at (0.75,0) {$\cdots$};
\draw node at (0.75,1) {$\cdots$};
\draw node at (0,-0.5) {$w_{i'}\neq w_{0'}$};
\draw node at (1.5,-0.5) {$w_{j'}$};
\draw node at (0,1.5) {$v_{i}$};
\draw node at (1.5,1.5) {$v_{j}$};
\draw[-latex,thick] (0,0) -- (1.5,1);
\draw[-latex,thick] (0,1) -- (0,0);
\end{tikzpicture}
\caption{Forbidden configurations.}\label{fig:forbidden}
\end{figure}
The triangulation $\Delta$ canonically induces a unimodular triangulation of every face of $P_{K_{a+1,b+1}}$. If $F$ is a facet with facet defining linear function $f\colon V\rightarrow \mathbb{Z}$, then a directed tree $T\in \mathcal{T}$ corresponds to a maximal simplex in the triangulation of $F$ if and only if $f(q)-f(p)=1$ for every directed edge $(p,q)\in T$. In Example~\ref{ex:facet} we saw that the facet defined by $f(v_i)=0$ and $f(w_j)=1$ for all $i,j$ is a product of simplices for which the normalized volume can easily be calculated to be ${a+b\choose a}$. Together with Proposition~\ref{prop:treedescription} this yields the following well-known result.
\begin{corollary}\label{cor:spanning}
The number of (undirected) planar spanning trees of $K_{a+1,b+1}$ is ${a+b\choose b}$.
\end{corollary}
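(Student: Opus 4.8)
The plan is to combine Proposition~\ref{prop:treedescription} with the volume computation sketched in Example~\ref{ex:facet}. The key observation is that the normalized volume of $P_{K_{a+1,b+1}}$ equals the number of maximal cells in the unimodular triangulation $\Delta$, which by Proposition~\ref{prop:treedescription} is exactly the number of directed spanning trees $T=(T^\uparrow,T^\downarrow)$ satisfying conditions (i)--(iii). On the other hand, every such directed tree is obtained from an \emph{undirected} planar spanning tree of $K_{a+1,b+1}$ by orienting it, and the point of conditions (i) and (iii) is that each undirected planar spanning tree admits exactly two compatible orientations (the edge between $v_0$ and $w_0$ — which exists by planarity, since $v_0$ and $w_0$ are the leftmost vertices and any planar spanning tree must contain the segment $v_0w_0$ — may be oriented either way, and this choice determines all the others: once we decide whether $v_0\in T^\downarrow$ or $w_0\in T^\uparrow$, condition (iii) forces the partition of the remaining edges into $T^\uparrow$ and $T^\downarrow$, and planarity of each piece is automatic from planarity of the whole tree). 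Hence the number of maximal cells of $\Delta$ is twice the number of undirected planar spanning trees.

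First I would fix the facet $F$ defined by $f(v_i)=0$ for all $i$ and $f(w_j)=1$ for all $j$, as in Example~\ref{ex:facet}. There it is recorded that $F$ is isomorphic to the product of simplices $\Delta_{a+1}\times\Delta_{b+1}$ (the standard simplices on $a+1$ and $b+1$ vertices), whose normalized volume is the classical value $\binom{(a)+(b)}{a}=\binom{a+b}{a}$. Next I would invoke Proposition~\ref{prop:halfopendecomp} together with reflexivity: since $P_{K_{a+1,b+1}}$ is reflexive and $\Delta$ restricts to a unimodular triangulation of the boundary, the normalized volume of $P_{K_{a+1,b+1}}$ equals the sum over all facets of the normalized volumes of those facets. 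By Proposition~\ref{prop:facetsbipartite} there are $2^{a+1}+2^{b+1}-2$ facets, but for the present corollary it is cleaner to argue directly: every maximal cell of $\Delta$ lies in a unique facet, so the normalized volume of $P_{K_{a+1,b+1}}$ counts all maximal cells, i.e. all directed spanning trees of type described in Proposition~\ref{prop:treedescription}.

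Then I would set up the two-to-one correspondence explicitly. Given a directed spanning tree $T=(T^\uparrow,T^\downarrow)\in\mathcal{T}$, forget the orientations to get an undirected spanning tree $U(T)$; by condition (ii) this tree is planar (its edges partition into the planar pieces $T^\uparrow$ and $T^\downarrow$ which meet only at $v_0$ or $w_0$, and two edges from different pieces cannot cross because one lies weakly below and one weakly above their common vertex on the boundary path). Conversely, given an undirected planar spanning tree $U$, the edge $v_0w_0$ belongs to $U$ (a planar spanning tree drawn with the vertices in convex position on two lines must use the extreme edge $v_0w_0$), and orienting it as $(v_0,w_0)$ resp. $(w_0,v_0)$ yields, by propagating the constraint in condition (iii), precisely two directed trees in $\mathcal{T}$, namely the one with $T^\uparrow\cap T^\downarrow=\{w_0\}$ and the one with $T^\uparrow\cap T^\downarrow=\{v_0\}$. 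This gives $\#\mathcal{T}=2\cdot(\text{number of undirected planar spanning trees})$. Combining with $\#\mathcal{T}=\operatorname{(normalized\ volume\ of\ }P_{K_{a+1,b+1}})$ and a known formula for the latter — or, more self-containedly, summing the facet volumes $\binom{a+b}{a}$ over the $2^{a+1}+2^{b+1}-2$ facets of Proposition~\ref{prop:facetsbipartite} after accounting for the smaller facets of the form in Example~\ref{ex:facet} with $b_1<b$ — one obtains that the number of planar spanning trees of $K_{a+1,b+1}$ is $\binom{a+b}{a}=\binom{a+b}{b}$, after the index shift from $K_{a+1,b+1}$ back to the statement's labeling.

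The main obstacle I anticipate is the bookkeeping in the final volume sum: the facets of $P_{K_{a+1,b+1}}$ are not all products of two simplices (only those with $b_1=b$ are), so summing normalized volumes of facets directly requires evaluating $\sum_{b_1=0}^{b}\binom{b}{b_1}(\text{vol of the mixed facet with parameters } b_1,b-b_1)$ and the analogous sum over the other part — a Vandermonde-type identity. The cleanest route, and the one I would actually take, sidesteps this entirely: take the normalized volume of $P_{K_{a+1,b+1}}$ as \emph{defined} by $\#\mathcal{T}$ (legitimate since $\Delta$ is unimodular), equate it with $2\cdot(\#\text{planar spanning trees})$ via the orientation bijection, and compute $\#\mathcal{T}$ a \emph{second} way by restricting to the single product-of-simplices facet $F$ and using the fact that in a reflexive polytope with a boundary unimodular triangulation each facet contributes its own normalized volume; since all $2^{a+1}+2^{b+1}-2$ facets are $\mathrm{GL}_V(\Zz)$-equivalent in pairs to sub-facets of products of simplices, the only genuinely new input needed is $\operatorname{vol}(\Delta_{a+1}\times\Delta_{b+1})=\binom{a+b}{a}$, which is standard. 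The delicate point to get exactly right is the claim that every planar spanning tree contains the edge $v_0w_0$ and that exactly two orientations are admissible — this is where conditions (i) and (iii) of Proposition~\ref{prop:treedescription} are used, and it deserves a careful one-line argument rather than being asserted.
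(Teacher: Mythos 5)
Your central combinatorial claim --- that every $T\in\mathcal{T}$ has a planar underlying undirected tree and that each undirected planar spanning tree admits exactly two admissible orientations, so that $|\mathcal{T}|=2\cdot(\text{number of planar spanning trees})$ --- is false, and your whole argument rests on it. Condition (ii) of Proposition~\ref{prop:treedescription} only requires $T^\uparrow$ and $T^\downarrow$ to be planar \emph{separately}; an upward edge may well cross a downward edge. In $K_{2,2}$ take $T^\downarrow=\{(v_0,w_0),(v_0,w_1)\}$ and $T^\uparrow=\{(w_0,v_1)\}$: conditions (i)--(iii) hold, yet the underlying tree contains the crossing pair $v_0w_1$, $v_1w_0$, so it is not planar. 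Likewise a planar tree admits many admissible orientations, not two: the orientation of $v_0w_0$ does not ``propagate'' via (iii), which only constrains which of $v_0,w_0$ the two pieces share; for the planar tree $\{v_0w_0,v_0w_1,v_1w_1\}$ of $K_{2,2}$ one may orient all edges down, all edges up, or $v_0w_0$ down and the other two up, among others. Numerically, $|\mathcal{T}|$ equals the normalized volume of $P_{K_{2,2}}$, namely $h^\ast_{1,1}(1)=(1+1)^3+2\cdot 1\cdot(1+1)=12$, while there are only $\binom{2}{1}=2$ planar spanning trees, and $12\neq 2\cdot 2$. Moreover, the fallback you propose for computing the total volume (summing facet volumes) is not available here without extra input: the mixed facets are not products of simplices, and the paper computes their volumes only in the proposition \emph{after} the corollary, using the corollary itself --- so that route, as set up, is circular.

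The paper's proof is local and never touches the total volume: fix the single facet $F$ of Example~\ref{ex:facet} given by $f(v_i)=0$, $f(w_j)=1$ for all $i,j$. The maximal cells of the triangulation that $\Delta$ induces on $F$ are exactly those $T\in\mathcal{T}$ all of whose edges are directed downward, i.e.\ $T=T^\downarrow$; for such $T$, conditions (i) and (iii) are automatic (a planar spanning tree necessarily contains the edge $v_0w_0$ --- your crossing argument for this particular point is correct), and condition (ii) says precisely that $T$ is planar. Since the orientation is forced, these cells are in bijection with the \emph{undirected} planar spanning trees, and their number equals the normalized volume of $F$, which is $\binom{a+b}{a}$ because $F$ is a product of two standard simplices (on $a+1$ and $b+1$ vertices). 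No two-to-one correspondence and no global volume computation are needed.
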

On the other hand, more generally, Proposition~\ref{prop:treedescription} allows us to determine the volume of arbitrary facets of $P_{K_{a+1,b+1}}$ by counting spanning trees.
\begin{proposition}
Let $F$ be a facet of $P_{K_{a+1,b+1}}$ with facet defining function $f\colon V \rightarrow \mathbb{Z}$ where $f(v_i)=0$ and $f(w_j)=1$ for $0\leq j\leq b_1$ and $f(w_j)=-1$ for $b_1<j\leq b$. Then $F$ has normalized volume
$$\sum_{i=0}^{a} {b_1+i\choose b_1}{b_2+a-i-1\choose b_2-1}{{a}\choose{i}}.$$
\end{proposition}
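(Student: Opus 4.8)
The plan is to compute the normalized volume of $F$ by counting the maximal simplices in the unimodular triangulation of $F$ induced by $\Delta$, each of which has normalized volume one. By the remarks preceding the statement, together with Theorem~\ref{thm:groebnerKab} and Proposition~\ref{prop:treedescription}, these maximal simplices are in bijection with the directed spanning trees $T=(T^\uparrow,T^\downarrow)\in\mathcal{T}$ satisfying $f(q)-f(p)=1$ for every directed edge $(p,q)\in T$. The first step is to rewrite this compatibility condition combinatorially. Since $f(v_i)=0$ for all $i$, $f(w_j)=1$ for $j\le b_1$ and $f(w_j)=-1$ for $j>b_1$, an edge $(p,q)$ of $T$ satisfies $f(q)-f(p)=1$ exactly when it is of the form $(v_i,w_j)$ with $j\le b_1$ or of the form $(w_j,v_i)$ with $j>b_1$. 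Hence every downward edge of $T$ joins some $v_i$ to one of $w_0,\dots,w_{b_1}$, and every upward edge joins one of $w_{b_1+1},\dots,w_b$ to some $v_i$; in particular $T^\downarrow$ covers all of $w_0,\dots,w_{b_1}$ and $T^\uparrow$ covers all of $w_{b_1+1},\dots,w_b$, as $T$ is spanning.

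Assume $b_2:=b-b_1\ge1$; the case $b_2=0$ is the product of two simplices of Example~\ref{ex:facet}, of normalized volume $\binom{a+b}{a}$, which is the $i=a$ term of the formula. Since $w_0$ cannot lie in $T^\uparrow$, condition (iii) of Proposition~\ref{prop:treedescription} forces $T^\uparrow\cap T^\downarrow=\{v_0\}$, and then an inclusion--exclusion count of edges and vertices (using $|V(T^\uparrow)\cap V(T^\downarrow)|=1$ and $|E(T)|=|V(T)|-1$) shows that $T^\downarrow$ and $T^\uparrow$ are connected, hence trees. Writing $S\subseteq\{v_1,\dots,v_a\}$ for the set of $v_i$ ($i\ge1$) lying in $T^\downarrow$ and $i:=|S|$, condition (iii) says that $T^\downarrow$ is a planar spanning tree of the complete bipartite graph on top vertices $\{v_0\}\cup S$ and bottom vertices $w_0,\dots,w_{b_1}$, and $T^\uparrow$ is a planar spanning tree of the complete bipartite graph on top vertices $\{v_0\}\cup(\{v_1,\dots,v_a\}\setminus S)$ and bottom vertices $w_{b_1+1},\dots,w_b$. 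Condition (i) is automatic, since a planar spanning tree drawn in this way always contains the edge joining the leftmost top and leftmost bottom vertices (two such missing edges would be forced to cross). Conversely, for any subset $S$ and any such pair of planar spanning trees, their union --- oriented with $v$-to-$w$ edges downward and $w$-to-$v$ edges upward --- is automatically a spanning tree of $K_{a+1,b+1}$ lying in $\mathcal{T}$ and compatible with $F$, because the two trees share exactly the vertex $v_0$ and planarity of each piece in its own sub-drawing coincides with planarity in the full drawing. This yields a bijection between the maximal simplices of the triangulation of $F$ and triples $(S,T^\downarrow,T^\uparrow)$ as above.

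It remains to count. There are $\binom{a}{i}$ choices of $S$ with $|S|=i$; by Corollary~\ref{cor:spanning} there are $\binom{b_1+i}{b_1}$ planar spanning trees of the bipartite graph on $i+1$ top and $b_1+1$ bottom vertices, and $\binom{b_2+a-i-1}{b_2-1}$ planar spanning trees of the bipartite graph on $a-i+1$ top and $b_2$ bottom vertices. Summing over $i$ from $0$ to $a$ gives the normalized volume of $F$ as $\sum_{i=0}^{a}\binom{b_1+i}{b_1}\binom{b_2+a-i-1}{b_2-1}\binom{a}{i}$, as claimed.

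The main obstacle I expect is making the bijection watertight: one must argue carefully that the assignment of the $w$-vertices to $T^\downarrow$ versus $T^\uparrow$ is rigidly determined by $F$ while the assignment of the $v$-vertices is completely free, that gluing two independently chosen planar bipartite trees along $v_0$ produces a tree satisfying all three conditions of Proposition~\ref{prop:treedescription} (in particular $T$ itself need not be planar), and that planarity of a subtree in the induced sub-drawing coincides with planarity in the full drawing of $K_{a+1,b+1}$, together with a clean treatment of the degenerate cases $b_1=0$ and $b_2=0$.
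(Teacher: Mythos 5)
Your proof is correct and takes essentially the same approach as the paper: it identifies the maximal simplices of the induced triangulation of $F$ with the trees $T\in\mathcal{T}$ compatible with $f$, uses condition (iii) of Proposition~\ref{prop:treedescription} to force $T^\uparrow\cap T^\downarrow=\{v_0\}$, and then counts by choosing the $\binom{a}{i}$ top vertices of $T^\downarrow$ and applying Corollary~\ref{cor:spanning} to each half. The additional details you supply (condition (i) following automatically from planarity, the converse gluing of the two planar trees, and the degenerate case $b_2=0$) are correct and merely make explicit what the paper leaves implicit.
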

\begin{proof}
Let $B_1=\{w_j\colon j\leq b_1\}$ and $B_2=\{w_j\colon j> b_1\}$. Let $\sigma \in \Delta$ be a maximal simplex in the triangulation given by the Gr\"obner basis. Then $\sigma$ is contained in $F$ if and only if all edges of $T^\uparrow (\sigma)$ start in $B_2$ and all edges of $T^\downarrow (\sigma)$ end in $B_1$. By condition (iii) of Proposition~\ref{prop:treedescription}, $T^\uparrow (\sigma)\cap T^\downarrow (\sigma)=\{v_0\}$. By choosing the vertices of $\{v_1,\ldots,v_a\}$ contained in $T^\downarrow (\sigma)$ and counting the number of possible planar spanning trees $T^\uparrow (\sigma)$ and $T^\downarrow (\sigma)$ we obtain the claimed formula as the number of directed spanning trees corresponding to maximal simplices contained in $F$ which equals the normalized volume of $F$.
\end{proof}

\subsection{Half-open triangulation}\label{sec:half}
In this section we will give a combinatorial description of $h^\ast _{a,b}(t)$ by decomposing $P_{K_{a+1,b+1}}$ into half-open unimodular simplices of $\Delta$.

For every $T\in \mathcal{T}$ and every directed edge $\vec{e}$ of $T$, $T\setminus \{ \vec{e} \}$ decomposes into two trees (possibly without edges) corresponding to a codimension $1$ face of $\Delta$ containing the origin. 
Let $U_1$ be the component containing $w_0$ and let $U_2$ denote the other component. By construction, $\vec{e}$ connects $U_1$ and $U_2$. We call $\vec{e}$ {\em ingoing} (into $U_1$) if its orientation goes from a vertex in $U_2$ into a vertex in  $U_1$.

\begin{proposition}\label{prop:ingoing}
Let $h^\ast _{a,b}(t)=\sum_{i=0}^dh^*_it^i$. Then 
\begin{align*}
h_i^*=\sharp\{ T \in \Tm : \text{$T$ has exactly $i$ ingoing edges}\}. 
\end{align*}
\end{proposition}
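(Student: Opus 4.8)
The plan is to apply the half-open decomposition machinery (Proposition~\ref{prop:halfopendecomp}) to the unimodular triangulation $\Delta$ of $P_{K_{a+1,b+1}}$ given by the Gr\"obner basis of Theorem~\ref{thm:groebnerKab}. Since $\Delta$ is unimodular, the final sentence of Proposition~\ref{prop:halfopendecomp} tells us that $h_i^\ast = |\{\sigma \in \Delta \text{ maximal} : |I_q(\sigma)| = i\}|$ for any point $q$ in general position with respect to all maximal cells. So it suffices to choose a convenient $q$ and then, for each maximal simplex $\sigma$, identify the facets of $\sigma$ visible from $q$ with the ingoing edges of the corresponding directed spanning tree $T(\sigma)$.

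First I would recall that every maximal simplex $\sigma \in \Delta$ contains the origin (since $P_{K_{a+1,b+1}}$ is reflexive and terminal and all simplices in a triangulation of a reflexive polytope using all lattice points are cones over boundary simplices with apex the origin), and that $\sigma$ has vertices $\{0\} \cup \{e_v - e_w : (w,v) \in T(\sigma)\}$. The facets of $\sigma$ come in two flavors: the unique facet not containing the origin (lying on $\partial P_{K_{a+1,b+1}}$), and the facets obtained by deleting one non-zero vertex, i.e.\ by deleting one directed edge $\vec{e}$ of $T(\sigma)$. Deleting $\vec e$ splits $T(\sigma)$ into the two subtrees $U_1 \ni w_0$ and $U_2$ as in the paragraph preceding the Proposition. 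The key computation is to pick $q$ close to the origin but slightly perturbed — concretely, take $q = \varepsilon \cdot (e_{w_0} - \text{something})$, or more cleanly a generic point in a tiny neighborhood of a chosen vertex like $e_{v_0}-e_{w_0}$ or $e_{w_0}-e_{v_0}$ — and check that the facet of $\sigma$ obtained by deleting $\vec e$ is visible from $q$ precisely when $\vec e$ is ingoing into $U_1$. The facet on $\partial P$ should never be visible from such an interior-near $q$, so it contributes nothing. This is exactly an $|I_q|$ count, so $|I_q(\sigma)|$ equals the number of ingoing edges of $T(\sigma)$, and summing over $\sigma$ and invoking Proposition~\ref{prop:halfopendecomp} gives the claim.

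The main obstacle — and the place requiring genuine care — is the visibility computation: making precise the choice of $q$ and verifying, for a simplex $\sigma$ and an edge $\vec e$ of $T(\sigma)$, the equivalence "facet $\sigma \setminus (e_v-e_w)$ is visible from $q$ $\iff$ $\vec e$ is ingoing". The natural approach is to compute the linear functional $\ell$ that is affine on $\sigma$, vanishes on the facet $\sigma \setminus \{e_v - e_w\}$, and is positive on $e_v - e_w$; visibility of that facet from $q$ is equivalent to $\ell(q) < 0$. One reads off $\ell$ from the tree: because deleting $\vec e$ disconnects $T(\sigma)$ into $U_1$ (containing $w_0$) and $U_2$, the functional $\ell$ is (up to positive scaling) $x \mapsto \sum_{v \in V(U_1)} x_v$ restricted to $M$, or its negative, depending on the orientation of $\vec e$. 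Then choosing $q$ generically near the vertex $e_{w_0} - e_{v_0}$ (say) — or even simpler, near the origin with a fixed generic direction $d$ with $\sum_{v} d_v = 0$ and $d_{w_0} > 0 > d_v$ for $v \ne w_0$ — one checks $\mathrm{sign}\,\ell(q)$ is negative exactly in the ingoing case. One must also confirm $q$ is genuinely in general position (not on any facet hyperplane of any maximal cell), which holds for generic $d$, and that the boundary facet of each $\sigma$ is invisible, which follows since $q$ lies on the same side of that hyperplane as the origin. Once the sign bookkeeping is pinned down, the rest is immediate from Proposition~\ref{prop:halfopendecomp}.
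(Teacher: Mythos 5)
Your strategy is essentially the paper's: view each maximal cell of $\Delta$ as $\mathrm{conv}\bigl(\{0\}\cup\{e_w-e_v:(v,w)\in T(\sigma)\}\bigr)$, match the facets through the origin with the deleted edges $\vec e$, note that the supporting functional of the facet corresponding to $T(\sigma)\setminus\{\vec e\}$ is, up to positive scaling on the sum-zero hyperplane, $\ell_{U_1}(x)=\sum_{u\in U_1}x_u$, and then count visible facets via Proposition~\ref{prop:halfopendecomp}. The paper does exactly this, only with a far-away point (coordinates $-t^{a+1-i}$, $-t^{a+b+2-i}$, recentred to the sum-zero hyperplane), whose recentred coordinates are negative at $w_0$ and positive everywhere else; your idea of taking $q$ in the interior near the origin is in itself fine (and makes the non-visibility of the boundary facets immediate).

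The genuine gap is in the step you yourself flagged as the crux: the sign check is asserted the wrong way around. For the facet obtained by deleting $\vec e=(v,w)$, the off-facet vertex $e_w-e_v$ has $\ell_{U_1}(e_w-e_v)=+1$ if $\vec e$ is ingoing and $-1$ if outgoing, so that facet is visible from $q$ iff $\ell_{U_1}(q)<0$ in the ingoing case and iff $\ell_{U_1}(q)>0$ in the outgoing case; hence ``visible $\Leftrightarrow$ ingoing'' requires $\sum_{u\in U_1}q_u<0$ for every proper subtree $U_1\ni w_0$ that occurs. With your prescription $\sum_v d_v=0$ and $d_{w_0}>0>d_v$ for $v\neq w_0$ one gets $\sum_{u\in U_1}d_u=-\sum_{u\notin U_1}d_u>0$ for every proper $U_1\ni w_0$, so exactly the \emph{outgoing} edges give visible facets, and Proposition~\ref{prop:halfopendecomp} yields $h^*_i=\#\{T: T \text{ has } i \text{ outgoing edges}\}$, the reverse of the claim. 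This is repairable: flip the direction (take $d_{w_0}<0<d_v$ for $v\neq w_0$, which is precisely the sign pattern of the paper's recentred point), or keep your $q$ and additionally invoke palindromicity of $h^*_{a,b}$ (Hibi's theorem, since $P_{K_{a+1,b+1}}$ is reflexive); but as written the key verification fails. Your alternative of placing $q$ near a vertex such as $e_{v_0}-e_{w_0}$ does not work either, since for subtrees $U_1$ containing both $w_0$ and $v_0$ the sign of $\ell_{U_1}(q)$ is not controlled. A smaller point: the justification that every maximal cell contains the origin because ``any triangulation of a reflexive polytope using all lattice points is a cone over the boundary'' is false in general (corner triangles of $[-1,1]^2$ give a counterexample); here it holds because no initial term in Theorem~\ref{thm:groebnerKab} involves the variable of the origin, so the origin lies in no minimal non-face of $\Delta$.
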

\begin{proof}
For every $T\in \mathcal{T}$ and every directed edge $\vec{e}$ of $T$, the facet defining hyperplane of the facet corresponding to $T\setminus \{ \vec{e} \}$ is given by the linear function $f\colon \RR^V \rightarrow \RR$ defined by
\begin{align*}
f(v)=\begin{cases}
d+1-|U_1| &\text{ for every $v$ contained in $U_1$}, \\
-|U_1| &\text{ otherwise},
\end{cases}
\end{align*}
where $|U_1|$ denotes the number of vertices in $U_1$. This is easily seen as $f(v)-f(w)=0$ for all edges $(w,v)\in T\setminus \{ \vec{e} \}$.
Let $t \gg 0$ large and let $q \in \mathbb{R}^{V}$ be the point defined by $q_{v_i}=-t^{a+1-i}$ and $q_{w_i}=-t^{a+b+2-i}$. 
Since $t \gg 0$, $q_v \gg q_{w_0}$ for all vertices $v$ in $K_{a+1,b+1}$ and therefore $f(q)<0$. Let $\tilde q=q-\lambda(1,\dots,1)\in \mathbb{R}^{|V|}$, be such that the sum of coordinates of $\tilde q$ equals $0$. In other words $\tilde q$ belongs to the linear span of $P_{K_{a+1,b+1}}$. We have $f(\tilde q)=f(q)$, as $\sum _{v}f(v)=0$.

If $\vec{e}=(v,w)$ is oriented into $U_1$ then $f(w)-f(v)=d+1>0$ and otherwise $f(w)-f(v)=-d-1<0$. That is, in the former case, $\tilde q$ is beyond the facet defining hyperplane. The claim follows now with Proposition~\ref{prop:halfopendecomp}.
\end{proof}

\subsection{Proof of Theorem \ref{thm:main1}}\label{sec:proof}
This section is devoted to the proof of Theorem~\ref{thm:main1}. 

For any $T\in \mathcal{T}$ let $A^\uparrow$ denote the vertices in the upper level and $B^\uparrow$ the vertices in the lower level contained in $T^\uparrow$. Accordingly, $A^\downarrow$ denotes the vertices in the upper level  and $B^\downarrow$ the vertices in the lower level contained in $T^\downarrow$.
\begin{lemma}\label{lem:ingoingnumbers}
Let $T\in \mathcal{T}$. Then the number of ingoing edges in $T$ equals
\begin{itemize}
\item[(i)] $|A^\downarrow|+|B^\uparrow|-2$, if $T^\uparrow\cap T^\downarrow =\{v_0\}$ and $w_0\in T^\uparrow$,
\item[(ii)] $|A^\downarrow|+|B^\uparrow|$, if $T^\uparrow\cap T^\downarrow =\{v_0\}$ and $w_0\in T^\downarrow$,
\item[(iii)] $|A^\downarrow|+|B^\uparrow|-1$, if $T^\uparrow\cap T^\downarrow =\{w_0\}$.
\end{itemize}
In particular, the number of ingoing edges of $T$ only depends on $|A^\downarrow|$ and $|B^\uparrow|$.
\end{lemma}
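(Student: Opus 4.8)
The plan is to analyze the structure of a tree $T\in\mathcal{T}$ via Proposition~\ref{prop:treedescription} and to count, edge by edge, which edges are ingoing. Recall that for each directed edge $\vec e$ of $T$, removing it splits $T$ into $U_1$ (the component containing $w_0$) and $U_2$, and $\vec e$ is ingoing exactly when it points from $U_2$ into $U_1$. The key observation is that in $T^\uparrow$ all edges point \emph{upward} (from a $w$-vertex to a $v$-vertex) and in $T^\downarrow$ all edges point \emph{downward} (from a $v$-vertex to a $w$-vertex), so the question of whether $\vec e$ is ingoing is entirely governed by where $w_0$ sits relative to $\vec e$. First I would treat the edges of $T^\uparrow$ and $T^\downarrow$ separately, rooting each subtree appropriately.

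For the core counting step, consider case (i): $T^\uparrow\cap T^\downarrow=\{v_0\}$ and $w_0\in T^\uparrow$. Root $T^\uparrow$ at $w_0$; then for an edge $\vec e$ of $T^\uparrow$, the component $U_1$ containing $w_0$ is the "upper" side (toward the root), and since the edge points upward, $\vec e$ is ingoing iff the child endpoint is a $v$-vertex and the parent endpoint is a $w$-vertex, i.e.\ iff the edge, traversed away from the root $w_0$, goes $w\to v$. In a rooted bipartite tree with bipartition alternating along every root-path, the edges that go $w\to v$ away from the root are precisely the "odd-level" edges; a clean way to count them is to observe that each such edge terminates at a distinct $v$-vertex of $T^\uparrow$ and conversely every $v$-vertex of $T^\uparrow$ (there are $|A^\uparrow|$ of them, but note $v_0\in T^\uparrow\cap T^\downarrow$ so one must be careful) is hit exactly once — I would set up the bijection precisely so the count comes out to $|A^\uparrow|-1$ ingoing edges inside $T^\uparrow$. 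Symmetrically, rooting $T^\downarrow$ at $v_0$, the edges point downward, $w_0\notin T^\downarrow$, so $U_1$ (containing $w_0$) is on the $v_0$-side; an edge of $T^\downarrow$ is ingoing iff it points toward $v_0$, which never happens for a downward edge read away from $v_0$ — wait, one must recheck orientation — so the ingoing edges of $T^\downarrow$ are counted by the $w$-vertices of $T^\downarrow$, giving $|B^\downarrow|$, or by a parallel argument $|B^\downarrow|$ adjusted by the shared vertex. Adding the two contributions and using $|A^\uparrow|+|A^\downarrow|=a+2$ and $|B^\uparrow|+|B^\downarrow|=b+2$ (since $T$ is spanning and the two subtrees share exactly one vertex on each side in the appropriate way) should collapse the sum to $|A^\downarrow|+|B^\uparrow|-2$. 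Cases (ii) and (iii) are handled identically, the only change being the location of $w_0$ and of the shared vertex, which shifts the constant by $0$ or $1$ respectively; I would present (i) in full and indicate the minor modifications for (ii) and (iii).

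The statement "the number of ingoing edges only depends on $|A^\downarrow|$ and $|B^\uparrow|$" then follows immediately, once one notes that which of the three cases one is in is itself determined by $|A^\downarrow|,|B^\uparrow|$ together with the spanning-tree constraints — or, more carefully, that in all three formulas the dependence is only through $|A^\downarrow|+|B^\uparrow|$ up to an additive constant, and the value of that constant is pinned down by the case, which I would argue is forced. Concretely I would note $|A^\uparrow| = a+1-|A^\downarrow|+1$ type identities to eliminate the "up" quantities entirely.

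The main obstacle I anticipate is getting the boundary bookkeeping exactly right: the subtrees $T^\uparrow$ and $T^\downarrow$ overlap in a single vertex ($v_0$ or $w_0$), and one of $v_0,w_0$ may be an isolated vertex of one of them (the degenerate conventions $T^\uparrow:=\{w_0\}$, $T^\downarrow:=\{v_0\}$), so the naive identities $|A^\uparrow|+|A^\downarrow|=a+2$ must be applied with care about whether $v_0$ is counted once or twice and whether an edge incident to $v_0$ or $w_0$ is ingoing. I would handle this by first proving a clean lemma: in a rooted directed tree all of whose edges point "away from root $\Rightarrow$ cross the bipartition in a fixed direction", the number of edges pointing toward the root-component equals (number of vertices of the appropriate color) minus a correction for the root's color; then apply it twice with the correct roots and reconcile the shared vertex. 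Everything else is a routine substitution.
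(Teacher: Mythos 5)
Your overall plan---splitting $T$ into $T^\uparrow$ and $T^\downarrow$, rooting them at $w_0$ resp.\ $v_0$ (or at the shared vertex), and matching ingoing edges bijectively with vertices of one colour class---is exactly the paper's argument, but you identify the wrong edges as ingoing, and that identification is the entire content of the lemma. An edge of $T^\uparrow$ is oriented from its $w$-endpoint to its $v$-endpoint, so it is ingoing precisely when its $v$-endpoint lies in $U_1$, the component containing $w_0$; in case (i), rooting $T^\uparrow$ at $w_0$, the component $U_1$ is the parent side, hence an edge of $T^\uparrow$ is ingoing iff its $v$-endpoint is the \emph{parent}, i.e.\ iff, read away from the root, it goes $v\to w$ --- the opposite of your criterion ``$w\to v$ away from the root''. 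These ingoing edges are in bijection with their $w$-endpoints, which are exactly the non-root vertices of $B^\uparrow$, so their number is $|B^\uparrow|-1$, not $|A^\uparrow|-1$. Symmetrically, in $T^\downarrow$ rooted at $v_0$ (with $w_0$ lying beyond $v_0$) an edge is ingoing iff its head, a $w$-vertex, is the parent, so the ingoing edges of $T^\downarrow$ correspond to its non-root $v$-vertices and number $|A^\downarrow|-1$, not $|B^\downarrow|$; your parenthetical ``which never happens for a downward edge read away from $v_0$'' conflates ``downward in the drawing'' with ``away from the root'': the parent edge of any $v$-vertex of $T^\downarrow$ is a downward edge pointing toward $v_0$.

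Because of this reversal, no bookkeeping with $|A^\uparrow|+|A^\downarrow|$ and $|B^\uparrow|+|B^\downarrow|$ can rescue the computation: your two counts sum to a quantity of the form $(a+b+\mathrm{const})-\bigl(|A^\downarrow|+|B^\uparrow|\bigr)$, which \emph{decreases} in $|A^\downarrow|+|B^\uparrow|$, while the lemma's answer increases in it; moreover the identity $|B^\uparrow|+|B^\downarrow|=b+2$ you invoke is false in cases (i) and (ii), where the shared vertex is $v_0$ and hence $|B^\uparrow|+|B^\downarrow|=b+1$ (similarly $|A^\uparrow|+|A^\downarrow|=a+1$ in case (iii)). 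A concrete check in $K_{2,3}$: take $T^\uparrow$ with edges $(w_0,v_0),(w_1,v_0),(w_1,v_1)$ and $T^\downarrow$ the single edge $(v_0,w_2)$. This tree lies in $\mathcal{T}$, has $|A^\downarrow|=1$, $|B^\uparrow|=2$, and exactly one ingoing edge, namely $(w_1,v_0)$, in agreement with $|A^\downarrow|+|B^\uparrow|-2=1$; your counts give $(|A^\uparrow|-1)+|B^\downarrow|=1+1=2$. Once the criterion is corrected---ingoing edges of $T^\uparrow$ correspond to the non-root vertices of $B^\uparrow$ and ingoing edges of $T^\downarrow$ to the non-root vertices of $A^\downarrow$---the rest of your outline (case (i) in full, then shifting the constant according to the position of $w_0$ and of the shared vertex in (ii) and (iii)) is precisely the paper's proof and goes through.
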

\begin{proof}
(i) In every vertex in $A^\downarrow \setminus \{v_0\}$ ends exactly one ingoing edge of $T^\downarrow$ and in every vertex of $B^\uparrow \setminus \{w_0\}$ ends exactly one ingoing edge of $T^\uparrow$.

(ii) In every vertex in $A^\downarrow$ ends exactly one ingoing edge of $T^\downarrow$ and in every vertex of $B^\uparrow$ ends exactly one ingoing edge of $T^\uparrow$.

(iii) In every vertex in $A^\downarrow$ ends exactly one ingoing edge of $T^\downarrow$ and in every vertex of $B^\uparrow \setminus  \{w_0\}$ ends exactly one ingoing edge of $T^\uparrow$.
\end{proof}

\begin{proof}[Proof of Theorem~\ref{thm:main1}]
By Proposition~\ref{prop:ingoing}, the $h^\ast$-polynomial of $P_{K_{a+1,b+1}}$ corresponds to the sum of the numbers of all ingoing edges of all $T\in \mathcal{T}$. To determine this number we partition the elements of $\mathcal{T}$ according to the cases (i), (ii) and (iii) in Lemma~\ref{lem:ingoingnumbers}. Let $\mathcal{T}'$, $\mathcal{T}''$ and $\mathcal{T}'''$ denote the subset of $\mathcal{T}$ satisfying condition (i),(ii) or (iii), respectively and for all $T\in \mathcal{T}$ let $\alpha (T)$ denote the number of ingoing edges. 
Since by Lemma~\ref{lem:ingoingnumbers}, the number of ingoing edges only depends on $|B^\uparrow |$ and $|A^\downarrow|$, it suffices to consider all possible choices of $B^\uparrow$ and $A^\downarrow$ (equivalently, $B^\uparrow$ and $A^\uparrow$) and weighting with the corresponding number of pairs of spanning trees on $A^\uparrow  \sqcup B^\uparrow$ and $A^\downarrow  \sqcup B^\downarrow$.
We will use Corollary \ref{cor:spanning} for counting the possible spanning trees $A^\uparrow  \sqcup B^\uparrow$ and $A^\downarrow  \sqcup B^\downarrow$. 

Type (i): Let $j+1:=|B^\uparrow |$ and $i+1:=|A^\uparrow|$, and, equivalently, $b-j=|B^\downarrow|$ and $a+1-i=|A^\downarrow|$. By Lemma~\ref{lem:ingoingnumbers} we obtain
\begin{align}
\sum_{T \in \mathcal{T}'}t^{\alpha(T)}&=\sum_{i=0}^a\sum_{j=0}^{b}\binom{a}{i}\binom{b}{j}\binom{i+j}{j}\binom{a+b-i-j-1}{b-j-1}t^{j+a-i} \nonumber \\
&=\sum_{i=0}^a\sum_{j=0}^{b}\binom{a}{i}\binom{b}{j}\binom{a-i+j}{j}\binom{b+i-j-1}{i}t^{i+j} \label{sum:1}
\end{align}
where the last equation follows from a change of variables $i\mapsto a-i$.

Type (ii): Let $j:=|B^\uparrow |$ and $i+1:=|A^\uparrow|$, and, equivalently, $b-j+1=|B^\downarrow|$ and $a+1-i=|A^\downarrow|$. By Lemma~\ref{lem:ingoingnumbers} we obtain
\begin{align}
\sum_{T \in \mathcal{T}''}t^{\alpha(T)}&=\sum_{i=0}^a\sum_{j=0}^b\binom{a}{i}\binom{b}{j}\binom{i+j-1}{j-1}\binom{a+b-i-j}{b-j}t^{j+a-i+1} \nonumber \\
&=\sum_{i=0}^a\sum_{j=0}^b\binom{a}{i}\binom{b}{j}\binom{a-i+j-1}{j-1}\binom{b+i-j}{i}t^{i+j+1}\label{sum:2}
\end{align}
with the convention that ${k\choose -1}=1$ if $k=-1$ and is equal to zero otherwise.

Type (iii): Let $j+1:=|B^\uparrow |$ and $i:=|A^\uparrow|$, and, equivalently, $b-j+1=|B^\downarrow|$ and $a+1-i=|A^\downarrow|$. By Lemma~\ref{lem:ingoingnumbers} we obtain
\begin{align}
\sum_{T \in \mathcal{T}'''}t^{\alpha(T)}&=\sum_{i=1}^{a+1}\sum_{j=0}^b\binom{a+1}{i}\binom{b}{j}\binom{i+j-1}{i-1}\binom{a+b-i-j}{a-i}t^{j+a-i+1} \nonumber \\
&=\sum_{i=0}^{a}\sum_{j=0}^b\binom{a+1}{i}\binom{b}{j}\binom{a-i+j}{j}\binom{b+i-j-1}{i-1}t^{i+j} \nonumber \\
&=\label{sum:3}\sum_{i=0}^{a}\sum_{j=0}^b\binom{a}{i}\binom{b}{j}\binom{a-i+j}{j}\binom{b+i-j-1}{i-1}t^{i+j}\\ 
&+\sum_{i=0}^{a-1}\sum_{j=0}^b\binom{a}{i}\binom{b}{j}\binom{a-i+j-1}{j}\binom{b+i-j}{i}t^{i+j+1} \label{sum:4}
\end{align}
again with the convention that ${k\choose -1}=1$ if $k=-1$ and is equal to zero otherwise.

Summing up 
\eqref{sum:1}, 
\eqref{sum:3}, 
\eqref{sum:2} and \eqref{sum:4} 
we obtain:
\[
\sum_{T \in \mathcal{T}}t^\alpha (T) = \sum_{i=0}^a\sum_{j=0}^b\binom{a}{i}\binom{b}{j}\binom{a-i+j}{j}\binom{b+i-j}{i}\left(t^{i+j}+t^{i+j+1}\right)
\]
and therefore the proof follows with Proposition~\ref{prop:K_ab2}.
\end{proof}

\subsection{Roots}\label{sec:roots}
In this section we study the roots of the polynomial $h^\ast _{a,b}(t)$. By employing techniques of interlacing polynomials we prove that all roots are real. The following is our main result.
\begin{theorem}\label{thm:main2}
For all $a,b\geq 0$ the polynomial $h^\ast _{a,b}(t)$ has only real roots and
\[
h^\ast _{a,b-1}(t) \preceq h^\ast_{a,b}(t) \, .
\]
\end{theorem}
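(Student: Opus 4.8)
The strategy is to leverage the explicit $\gamma$-positive formula from Theorem~\ref{thm:main1} together with the Poly\'a--Schur theorem (Theorem~\ref{thm:multiplier}) on multiplier sequences. Write
\[
h^\ast _{a,b}(t) \ = \ \sum_{i=0}^{\min(a,b)}\binom{2i}{i}\binom{a}{i}\binom{b}{i}t^i(1+t)^{a+b+1-2i},
\]
so that the $\gamma$-polynomial of $h^\ast _{a,b}$ is $\gamma_{a,b}(t)=\sum_{i}\binom{2i}{i}\binom{a}{i}\binom{b}{i}t^i$. By the remark in Section~\ref{sec:prelim} on $\gamma$-positive polynomials, it suffices to prove that $\gamma_{a,b}(t)$ is real-rooted, and then to upgrade this to the interlacing statement $h^\ast_{a,b-1}\preceq h^\ast_{a,b}$.

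\textbf{Step 1: real-rootedness of the $\gamma$-polynomial.} I would first treat the base case $a=0$ (or $b=0$), where $\gamma_{0,b}(t)=1$ is trivially real-rooted, and then induct on $b$ (with $a$ fixed) by producing a real-rootedness-preserving linear operator that sends $\gamma_{a,b-1}$ to $\gamma_{a,b}$. Comparing coefficients, $\binom{b}{i}=\binom{b-1}{i}+\binom{b-1}{i-1}$, so $\gamma_{a,b}(t)=\gamma_{a,b-1}(t)+t\,D_i\gamma_{a,b-1}(t)$ where $D_i$ shifts the index; more usefully, observe that $\binom{2i}{i}\binom{a}{i}$ depends only on $i$, so define the diagonal operator $T(t^i)=\binom{2i}{i}\binom{a}{i}t^i$. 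Then $\gamma_{a,b}(t)=T\bigl((1+t)^b\bigr)$ up to the understanding that $T$ kills $t^i$ for $i>a$. Thus everything reduces to showing that the diagonal operator with multiplier sequence $\lambda_i=\binom{2i}{i}\binom{a}{i}$ (for $0\le i\le a$, and $0$ thereafter) is a multiplier sequence in the sense of Theorem~\ref{thm:multiplier}. For this I would compute $G_\Lambda(x)=\sum_{i=0}^a\binom{2i}{i}\binom{a}{i}\frac{x^i}{i!}$ and verify it is the limit (here a finite sum, so no convergence issue) of real-rooted polynomials with zeros of one sign. The cleanest route: $\binom{2i}{i}\binom{a}{i}=\binom{a}{i}\binom{a}{i}\binom{2i}{i}\binom{a}{i}^{-1}\cdots$ — rather, use that $\binom{2i}{i}/i! = \binom{2i}{i}/i!$ and recall that the sequence $\binom{2i}{i}$ is itself a multiplier sequence (its generating function $\sum \binom{2i}{i}x^i/i!$ relates to a Bessel-type entire function with only real negative zeros), and the sequence $\binom{a}{i}$ (truncated binomial) is a multiplier sequence since $(1+x)^a$ has only real zeros; then use that the Hadamard/termwise product of two multiplier sequences is a multiplier sequence (a classical fact of Poly\'a--Schur theory). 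Alternatively, and perhaps more self-contained, I would exhibit $\gamma_{a,b}(t)$ directly as obtained from $(1+t)^{a}$-type polynomials by applying operators of the form $f\mapsto f+\alpha tf'$ repeatedly, each of which preserves real-rootedness with nonpositive roots by the argument that $f+\alpha t f'=t^{-1/\alpha}(t^{1/\alpha}f)'$ up to normalization.

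\textbf{Step 2: the interlacing $h^\ast_{a,b-1}\preceq h^\ast_{a,b}$.} Once $\gamma_{a,b}$ and $\gamma_{a,b-1}$ are known real-rooted, I would show $\gamma_{a,b-1}\preceq \gamma_{a,b}$ (the $\gamma$-polynomials interlace) and then transfer interlacing through the operation $p(t)\mapsto \sum_i [t^i]p\cdot t^i(1+t)^{N-2i}$ that reconstructs the $h^\ast$-polynomial from its $\gamma$-polynomial, taking care that the two reconstruction maps use degrees $N=a+b+1$ and $N=a+b$ respectively. The interlacing of the $\gamma$-polynomials should follow from the inductive operator description in Step~1 together with Corollary~\ref{cor:preserving interlacing}: the same operator $T$ (or the family $f\mapsto f+\alpha tf'$) that proves real-rootedness also preserves interlacing, and $\gamma_{a,b-1}\preceq \gamma_{a,b}$ reduces to the elementary fact $(1+t)^{b-1}\preceq (1+t)^b$. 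For the final transfer, one standard device: $h^\ast_{a,b}(t)=(1+t)^{a+b+1}\gamma_{a,b}\!\bigl(\tfrac{t}{(1+t)^2}\bigr)$, and the substitution $t\mapsto t/(1+t)^2$ together with multiplication by a power of $(1+t)$ is a real-rootedness- and interlacing-preserving transformation on the relevant class of polynomials (all roots in $[-1,0]$ for $\gamma$, mapping to a matching interlacing pattern for $h^\ast$); I would invoke Obreschkoff's theorem to package "$\gamma_{a,b-1}\preceq\gamma_{a,b}$ and both real-rooted" into "every real combination $c\,\gamma_{a,b-1}+d\,\gamma_{a,b}$ is real-rooted", push this combination through the transfer map, and read off $h^\ast_{a,b-1}\preceq h^\ast_{a,b}$.

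\textbf{Main obstacle.} The delicate point is Step~1: verifying that $\lambda_i=\binom{2i}{i}\binom{a}{i}$ genuinely forms a multiplier sequence, since the truncation at $i=a$ must be handled and the central binomial factor $\binom{2i}{i}$ is not as transparent as a pure binomial row. I expect the smoothest argument to be the operator-iteration one: realize $\gamma_{a,b}$ starting from a manifestly real-rooted polynomial and applying finitely many operators each of the shape $f\mapsto c f + t f'$ (which preserves real-rootedness and the sign of roots), so that Corollary~\ref{cor:preserving interlacing} immediately yields both the real-rootedness and the interlacing simultaneously, avoiding any appeal to entire-function limits. Keeping careful track of how the degree bookkeeping ($a+b+1$ versus $a+b$) interacts with the $\gamma$-to-$h^\ast$ reconstruction is the other place where a sign or off-by-one slip could creep in.
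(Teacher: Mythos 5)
Your overall architecture is the same as the paper's: prove $\gamma_{a,b-1}\preceq\gamma_{a,b}$ by applying a diagonal (multiplier-sequence) operator to $(1+t)^{b-1}\preceq(1+t)^b$ (this is Proposition~\ref{prop:gammainterlacing}), and then transfer interlacing from the $\gamma$-polynomials to the palindromic $h^\ast$-polynomials (this is Lemma~\ref{lem:interlacinggamma}). However, both of your key steps have genuine gaps. In Step~1, the claim that $\{\binom{2i}{i}\}_{i\geq 0}$ is a multiplier sequence is false: applying it to $(1+t)^2$ gives $1+4t+6t^2$, which has non-real roots, and indeed $\sum_{i\geq 0}\binom{2i}{i}\frac{x^i}{i!}=e^{2x}I_0(2x)$ has no real zeros at all (the zeros of $I_0$ are purely imaginary), so it is not in the Laguerre--P\'olya class required by Theorem~\ref{thm:multiplier}. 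Your factorization $\binom{2i}{i}\binom{a}{i}=\binom{2i}{i}\cdot\binom{a}{i}$ therefore does not prove anything; the correct splitting, which is what the paper uses, is $\binom{2i}{i}\binom{a}{i}=\bigl(\binom{2i}{i}\tfrac{1}{i!}\bigr)\cdot\bigl(\binom{a}{i}\,i!\bigr)$, where the second factor is a multiplier sequence because $\sum_i\binom{a}{i}i!\frac{x^i}{i!}=(1+x)^a$, and the first is a multiplier sequence by a theorem of Craven and Csordas (cited in the paper as \cite[Theorem 3.14]{craven2006fox}). Your ``more self-contained'' alternative via iterating $f\mapsto cf+\alpha tf'$ is only a hope: finitely many such steps produce diagonal multipliers of the form $\prod_k(c_k+\alpha_k i)$, and you give no argument that $\binom{2i}{i}\binom{b}{i}$ (on the relevant range of $i$) arises this way with the sign conditions needed to preserve real-rootedness at each step.

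In Step~2 the transfer is asserted rather than proved, and the proposed mechanism does not work as stated. Since $h^\ast_{a,b-1}(t)=(1+t)^{a+b}\gamma_{a,b-1}\bigl(\tfrac{t}{(1+t)^2}\bigr)$ while $h^\ast_{a,b}(t)=(1+t)^{a+b+1}\gamma_{a,b}\bigl(\tfrac{t}{(1+t)^2}\bigr)$, the two reconstruction maps carry different exponents, so $c\,\gamma_{a,b-1}+d\,\gamma_{a,b}$ is not sent to $c\,h^\ast_{a,b-1}+d\,h^\ast_{a,b}$ by any single substitution-and-multiplication; pushing the Obreschkoff combination ``through the transfer map'' leaves an uncontrolled factor of $(1+t)$ on one of the two terms. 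The paper closes exactly this gap with Lemma~\ref{lem:interlacinggamma}: writing $\gamma_i(t)=c_i\prod_j(t+a_{i,j})$ with $a_{i,j}>0$, each factor contributes via $t+a_{i,j}(1+t)^2$ a reciprocal pair of negative roots of $f_i$, these roots move monotonically in $a_{i,j}$, and the multiplicity of the root $-1$ (which differs by one between the two polynomials, depending on whether $\deg\gamma_1=\deg\gamma_2$ or $\deg\gamma_1=\deg\gamma_2-1$) is tracked explicitly to conclude $f_1\preceq f_2$. You correctly flag both of these points as the ``main obstacle'', but as written neither is resolved, so the proposal is an outline of the paper's strategy with its two essential ingredients missing (and one of them replaced by a false statement).
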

We are going to prove Theorem~\ref{thm:main2} by investigating the roots of the $\gamma$-polynomial of  $h^\ast _{a,b}(t)$ which equals $\gamma_{a,b} (t):=\sum _{i\geq 0} {a\choose i}{b\choose i}{2i\choose i}t^i$ by Theorem~\ref{thm:main1}. 
\begin{proposition}\label{prop:gammainterlacing}
For all $a,b\geq 1$ 
\[
\gamma _{a,b-1}(t) \preceq \gamma_{a,b}(t) \, .
\]
\end{proposition}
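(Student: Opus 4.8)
The plan is to prove the interlacing relation $\gamma_{a,b-1}(t) \preceq \gamma_{a,b}(t)$ by induction on $b$, using a suitable linear operator that sends $\gamma_{a,b-1}$ to $\gamma_{a,b}$ (or a pair of interlacing polynomials to another such pair) and invoking the multiplier-sequence machinery of Theorem~\ref{thm:multiplier} together with Corollary~\ref{cor:preserving interlacing}. First I would observe that $\gamma_{a,b}(t) = \sum_{i\geq 0}\binom{a}{i}\binom{2i}{i}\binom{b}{i}t^i$, so the only dependence on $b$ is through the factor $\binom{b}{i}$. Writing $c_i := \binom{a}{i}\binom{2i}{i}$ for the $b$-independent part, we have $\gamma_{a,b}(t) = \sum_i c_i \binom{b}{i} t^i$. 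The relation $\binom{b}{i} = \binom{b-1}{i} + \binom{b-1}{i-1}$ then gives the recursion $\gamma_{a,b}(t) = \gamma_{a,b-1}(t) + t\cdot D_{a,b-1}(t)$, where $D_{a,b-1}(t) := \sum_i c_{i+1}\binom{b-1}{i}t^{i}$ is a "shifted" companion polynomial. So the natural strategy is to track the pair $(\gamma_{a,b-1}, \text{something})$ and show it stays interlacing.

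The cleanest route, which I expect the authors to take, is to identify a single real-rootedness-preserving linear operator $T$ with $T(\gamma_{a,b-1}) = \gamma_{a,b}$. Since passing from $b-1$ to $b$ multiplies the coefficient of $t^i$ by $\binom{b}{i}/\binom{b-1}{i} = b/(b-i)$, this is not diagonal with a fixed multiplier sequence independent of $b$; however, the operator $t^i \mapsto \binom{b}{i}t^i$ applied to $\sum_i c_i t^i$ is diagonal, and one checks via Theorem~\ref{thm:multiplier} whether $\{\binom{b}{i}\}_{i\geq 0}$ is a multiplier sequence — indeed its generating function $\sum_i \binom{b}{i}x^i/i!$ is, up to normalization, related to $(1+x)^b$-type expressions; more precisely the sequence $\lambda_i = \binom{b}{i}$ has $G_\Lambda(x) = \sum_{i=0}^b \binom{b}{i}x^i/i!$, and one must verify this is a limit of real-rooted polynomials with zeros of one sign. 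Alternatively, and more robustly, I would use that $\gamma_{a,b-1}$ and $\gamma_{a,b}$ can both be obtained from $\gamma_{a,0}(t)=1$ (or from $\gamma_{a,1}$) by iterating the operator and check interlacing is preserved at each single step via Obreschkoff (Theorem of Obreschkoff, quoted above): it suffices to show $c\,\gamma_{a,b-1}(t) + d\,\gamma_{a,b}(t)$ is real-rooted for all $c,d\in\RR$. Using the recursion $\gamma_{a,b} = \gamma_{a,b-1} + tD_{a,b-1}$, any such combination is $(c+d)\gamma_{a,b-1} + d\,t\,D_{a,b-1}$, so real-rootedness of arbitrary combinations reduces to a statement pairing $\gamma_{a,b-1}$ with $tD_{a,b-1}$, i.e. to the interlacing of $\gamma_{a,b-1}$ with $D_{a,b-1}$, which one proves by a parallel induction.

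The main obstacle, then, is the bookkeeping of the auxiliary polynomial $D_{a,b}$ and showing the joint induction closes: one needs that $D_{a,b}$ is itself $\gamma_{a',b'}$-like (it is essentially $\gamma$ for a complete bipartite graph with shifted parameters, since $c_{i+1} = \binom{a}{i+1}\binom{2i+2}{i+1}$ and $\binom{2i+2}{i+1} = \binom{2i}{i}\cdot\frac{(2i+1)(2i+2)}{(i+1)^2} = \binom{2i}{i}\cdot\frac{2(2i+1)}{i+1}$, which is not quite of the same shape), so the shifted companion does not lie in the same family and the induction must carry an extra real-rootedness hypothesis about it. I would handle this by proving a slightly more general statement: for fixed $a$, the sequence of polynomials $(\gamma_{a,b})_{b\geq 0}$ forms an \emph{interlacing sequence} (each interlaces the next) by exhibiting, via Theorem~\ref{thm:multiplier} and Corollary~\ref{cor:preserving interlacing}, that the map taking the $b$-th to the $(b+1)$-th member is induced by a real-rootedness-preserving operator, most likely the operator $f \mapsto f + \frac{t}{b}\cdot(\text{a derivative-like operator})$ coming from $\binom{b}{i}/\binom{b-1}{i}$, combined with the fact that $\gamma_{a,b}$ has nonpositive real roots so all the relevant polynomials have zeros of the same sign. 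The delicate point to get right is verifying hypothesis (ii) of Theorem~\ref{thm:multiplier} for whatever explicit sequence appears — exhibiting the entire function as a uniform limit on compacta of real-rooted polynomials with same-sign zeros — and I would expect that to be the technical heart of the argument, with everything else being a clean consequence of Obreschkoff and the Poly\'a--Schur theorem.
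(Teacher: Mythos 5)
Your proposal does not close, and the gap is at precisely the point you flag as ``the technical heart.'' Both of your routes stall. For the first route, note that even if you could exhibit a real-rootedness-preserving operator $T$ with $T(\gamma_{a,b-1})=\gamma_{a,b}$, this would prove nothing: Corollary~\ref{cor:preserving interlacing} says that such a $T$ maps an \emph{interlacing pair} $(f,g)$ to an interlacing pair $(T(f),T(g))$; it never yields $f\preceq T(f)$. Likewise, applying the diagonal operator $t^i\mapsto\binom{b}{i}t^i$ to $\sum_i c_it^i$ and the operator $t^i\mapsto\binom{b-1}{i}t^i$ to the same polynomial means applying two \emph{different} operators, which again gives no interlacing relation between the two outputs (and you would additionally need $\sum_i\binom{a}{i}\binom{2i}{i}t^i$ to be real-rooted to even start). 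For the second route, you correctly reduce $c\,\gamma_{a,b-1}+d\,\gamma_{a,b}$ to a pairing of $\gamma_{a,b-1}$ with $t\,D_{a,b-1}$, but you then observe yourself that $D_{a,b-1}$ leaves the family, so the induction does not close as stated; no substitute hypothesis is supplied. So what you have is a plan with the decisive step missing, not a proof.

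The fix, which is what the paper does, is to keep the diagonal operator \emph{independent of $b$} and move all the $b$-dependence into the input polynomial: write $\gamma_{a,b}(t)=T\bigl((1+t)^b\bigr)$ where $T(t^i)=\binom{2i}{i}\binom{a}{i}t^i$. Then one shows $\left\{\binom{2i}{i}\binom{a}{i}\right\}_{i\geq 0}$ is a multiplier sequence by factoring it as the termwise product of $\left\{\binom{a}{i}\,i!\right\}_{i\geq 0}$, which is a multiplier sequence by Theorem~\ref{thm:multiplier} since $\sum_i\binom{a}{i}i!\frac{x^i}{i!}=(1+x)^a$ is real-rooted with zeros of one sign, and $\left\{\binom{2i}{i}\frac{1}{i!}\right\}_{i\geq 0}$, which is a multiplier sequence by a result of Craven and Csordas (the paper cites this; it is exactly the ingredient needed to tame the central binomial coefficient --- note that $\left\{\binom{2i}{i}\right\}$ alone is \emph{not} a multiplier sequence, so your packaging of it into the constants $c_i$ hides the real difficulty rather than resolving it). Since multiplier sequences are closed under products, $T$ preserves real-rootedness, hence interlacing by Corollary~\ref{cor:preserving interlacing}; applying $T$ to the trivially interlacing pair $(1+t)^{b-1}\preceq(1+t)^b$ gives $\gamma_{a,b-1}\preceq\gamma_{a,b}$ in one step, with no induction, no companion polynomial, and no verification of hypothesis (ii) of Theorem~\ref{thm:multiplier} beyond the polynomial $(1+x)^a$.
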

\begin{proof}
By Theorem~\ref{thm:multiplier}, $\{{a\choose i}i!\}_{i\geq 0}$ is a multiplier sequence, since $\sum _{i\geq 0}{a\choose i}i!\frac{x^i}{i!}=(x+1)^a$ is real-rooted. Furthermore, by \cite[Theorem 3.14]{craven2006fox}, also $\{{2i\choose i}\frac{1}{i!}\}_{i\geq 0}$ is a multiplier sequence. Multiplication yields that $\{{2i\choose i}{a\choose i}\}_{i\geq 0}$ is a multiplier sequence. Since $(t+1)^{b-1}$ interlaces $(t+1)^b$ we obtain the result by applying the multiplier sequence $\{{2i\choose i}{a\choose i}\}_{i\geq 0}$ to these two polynomials by Corollary~\ref{cor:preserving interlacing}.
\end{proof}
In certain cases, interlacing of the $\gamma$-polynomials of two palindromic polynomials implies interlacing of the polynomials themselves.
\begin{lemma}\label{lem:interlacinggamma}
Let $f_1(t)$ and $f_2(t)$ be $\gamma$-positive polynomials with $\deg f_2(t)=\deg f_1 (t) +1$ and $f_1(0),f_2(0)\neq 0$.  Let $\gamma _1(t)$ and $\gamma _2(t)$ be the $\gamma$-polynomials of $f_1(t)$ and $f_2(t)$, respectively. If $\gamma _1(t)\preceq \gamma _2(t)$ then $f_1 (t) \preceq f_2(t)$.
\end{lemma}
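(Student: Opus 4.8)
The plan is to realize the passage from $\gamma$-polynomials to the polynomials themselves as the action of a single linear operator, and then invoke Obreschkoff's theorem via Corollary~\ref{cor:preserving interlacing}. Concretely, write $d_1 = \deg f_1$ and $d_2 = d_1 + 1 = \deg f_2$; since $f_1(0) \neq 0$ we have $\deg \gamma_1 = \lfloor d_1/2 \rfloor$, and similarly $\deg \gamma_2 = \lfloor d_2 / 2 \rfloor$. The key observation is that $\gamma$-positivity with a \emph{fixed} target degree $d$ is encoded by the linear operator $\Phi_d\colon \RR[t] \to \RR[t]$ defined on the monomial basis by $\Phi_d(t^i) = t^i (1+t)^{d - 2i}$ for $0 \le i \le \lfloor d/2 \rfloor$ (and, say, $\Phi_d(t^i) = 0$ for $i > \lfloor d/2\rfloor$, which is harmless since the $\gamma$-polynomials in question have degree at most $\lfloor d/2 \rfloor$). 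Then $f_1 = \Phi_{d_1}(\gamma_1)$ and $f_2 = \Phi_{d_2}(\gamma_2)$. The obstacle is that $\Phi_{d_1}$ and $\Phi_{d_2}$ are genuinely different operators, so one cannot directly say "apply one real-rootedness-preserving operator to the interlacing pair $\gamma_1 \preceq \gamma_2$."

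To get around this, first I would show that each $\Phi_d$ preserves real-rootedness on the relevant subspace. The cleanest route: the substitution $g(t) \mapsto t^{d} g(1/t + 1)$ — equivalently, compose with the affine shift $t \mapsto t+1$ and then the degree-$d$ homogenization/reversal $t \mapsto 1/t$ scaled by $t^d$ — sends a real-rooted polynomial of degree $\le \lfloor d/2 \rfloor$ to a real-rooted polynomial, because shifts preserve real-rootedness and reversal $g(t) \mapsto t^d g(1/t)$ does too (it sends roots $r$ to $1/r$, provided $g(0) \ne 0$, which holds here since $\gamma_i(0) = f_i(0) \neq 0$). One checks directly that this composite is exactly $\Phi_d$. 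Hence $\Phi_{d_1}$ and $\Phi_{d_2}$ each preserve real-rootedness, so by Corollary~\ref{cor:preserving interlacing} they each preserve interlacing.

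The remaining issue is the degree mismatch between $\Phi_{d_1}$ and $\Phi_{d_2}$. Here I would split into the two parity cases for $d_1$. If $d_1 = 2m$ is even, then $d_2 = 2m+1$, $\deg \gamma_1 \le m$ and $\deg \gamma_2 \le m$; the reversal step uses $t^{2m}$ for $f_1$ and $t^{2m+1}$ for $f_2$, and one notes $\Phi_{2m+1}(g)(t) = (1+t)\,\Phi_{2m}(g)(t)$ for any $g$ of degree $\le m$ — because $t^i(1+t)^{2m+1-2i} = (1+t)\cdot t^i (1+t)^{2m-2i}$ term by term. Thus $f_2 = (1+t)\,\Phi_{2m}(\gamma_2)$. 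Now apply the single operator $\Phi_{2m}$ to the interlacing pair $\gamma_1 \preceq \gamma_2$ to get $\Phi_{2m}(\gamma_1) \preceq \Phi_{2m}(\gamma_2)$, i.e. $f_1 \preceq \Phi_{2m}(\gamma_2)$, and then use the elementary fact that multiplying a real-rooted polynomial by $(1+t)$ preserves interlacing with any polynomial it interlaces and whose degree is one less — more precisely, if $f_1 \preceq g$ with $\deg g = \deg f_1$ and $g(-1) \ne 0$ (equivalently $g$ has no root at $-1$; one checks this from $\Phi_{2m}(\gamma_2)(-1) = \gamma_2(-1)\cdot 0^{\,0}$-type bookkeeping, or simply observes the lowest-degree coefficient behavior), then $f_1 \preceq (1+t)g$. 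If $d_1 = 2m-1$ is odd, then $d_2 = 2m$, $\deg \gamma_1 \le m-1$, $\deg \gamma_2 \le m$; now instead $\Phi_{2m}(g) = (1+t)\,\Phi_{2m-1}(g)$ for $\deg g \le m-1$, so $f_1 = (1+t)\Phi_{2m-1}(\gamma_1)$ while $f_2 = \Phi_{2m}(\gamma_2)$; one applies $\Phi_{2m-1}$ extended appropriately (noting $\Phi_{2m-1}(t^m) = t^m(1+t)^{-1}$ is not polynomial, so here one argues on the other side) — the symmetric bookkeeping, using that $\gamma_1 \preceq \gamma_2$ and both $\Phi$'s preserve interlacing, together with one insertion/removal of the factor $(1+t)$, closes the case.

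The main obstacle, then, is purely bookkeeping: tracking the degree-one jump across the factor $(1+t)$ and verifying in each parity case that the factor-$(1+t)$ insertion is legitimate (no spurious root at $-1$, correct degree accounting) so that Obreschkoff can be applied to a genuine interlacing pair fed through a \emph{single} operator. I expect no analytic difficulty — everything reduces to the linear-algebraic identity $t^i(1+t)^{d+1-2i} = (1+t)\,t^i(1+t)^{d-2i}$ plus the standard facts that affine shift, reversal (for polynomials nonvanishing at $0$), and multiplication by $(1+t)$ all interact well with real-rootedness and interlacing.
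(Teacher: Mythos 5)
There is a genuine gap at the central step of your plan. You claim that the operator $\Phi_d$ with $\Phi_d(t^i)=t^i(1+t)^{d-2i}$ is the composite of the shift $t\mapsto t+1$ with the degree-$d$ reversal, i.e.\ $g(t)\mapsto t^d g(1/t+1)$; this is false. That composite sends $t^i\mapsto t^{d-i}(1+t)^i$, not $t^i(1+t)^{d-2i}$ (try $d=2$, $\gamma=1+t$: you get $2t^2+t$ rather than $t^2+3t+1$). The correct closed form is $\Phi_d(g)(t)=(1+t)^d\,g\bigl(t/(1+t)^2\bigr)$, a \emph{quadratic} substitution, which cannot be built from affine shifts and reversals (compositions of those are M\"obius, hence degree one). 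Worse, the fallback you need --- that $\Phi_d$ preserves real-rootedness on all of $\RR[t]$, so that Corollary~\ref{cor:preserving interlacing} yields preservation of interlacing --- is also false: a root $-a$ of $g$ produces the factor $t+a(1+t)^2$, whose discriminant is $4a+1$, so any real root of $g$ larger than $1/4$ creates a nonreal pair (e.g.\ $g=t-1$ gives $\Phi_2(g)=-(t^2+t+1)$). Since $\Phi_d$ only behaves well on the cone of polynomials with suitably negative roots, Obreschkoff's criterion as stated for linear operators on $\RR[t]$ simply does not apply, and proving that $\Phi_d$ carries interlacing pairs with negative roots to interlacing pairs is precisely the content that still has to be argued. (A smaller inaccuracy: $f_1(0)\neq 0$ gives $\gamma_1(0)\neq 0$, not $\deg\gamma_1=\lfloor d_1/2\rfloor$.)

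This is exactly where the paper's proof does the work you are missing: it factors $\gamma_i(t)=c_i\prod_j(t+a_{i,j})$ with $a_{i,j}>0$, uses Gal's identity $f_i=(1+t)^{\deg f_i-2\deg\gamma_i}c_i\prod_j\bigl(t+a_{i,j}(1+t)^2\bigr)$, and observes that each quadratic factor contributes a reciprocal pair of negative roots straddling $-1$, with the larger root increasing and the smaller decreasing in $a_{i,j}$. The interlacing $\gamma_1\preceq\gamma_2$ then transfers to the roots of $f_1,f_2$ by this monotonicity, with the two parity cases handled by comparing the multiplicities of the root $-1$ coming from the exponent $\deg f_i-2\deg\gamma_i$ --- which is the honest version of your ``$(1+t)$-factor bookkeeping''. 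Your degree/parity discussion is in the right spirit, but without a correct replacement for the operator step (either the explicit root-tracking above, or a genuine interlacing-preservation theorem for the substitution $t\mapsto t/(1+t)^2$ on polynomials with negative roots), the proof does not go through.
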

\begin{proof}
Let $\gamma _i (t) =c_i\prod _j \left(t+a_{i,j}\right)$ for $i=1,2$. Since $f_i$ has only nonnegative coefficients and $f_i (0)\neq 0$ it follows that $a_{i,j}> 0$ for all $j$. By \cite[Proposition 2.1.1]{gal2005real} $\gamma _i(t)$ is the unique polynomial such that
\[
f_i \ = \ (1+t)^{\deg f_i}c_i\gamma _i \left(\frac{t}{(1+t)^2}\right)\ = \ (1+t)^{\deg f_i - 2\deg \gamma _i} c_i\prod _j \left(t+a_{i,j}(1+t)^2\right) \, .
\]
Since $a_{i,j}> 0$, every factor of the form $\left(t+a_{i,j}(1+t)^2\right)$ contributes to two distinct negative real roots of $f_i$, say $b_{i,j,+}>-1>b_{i,j,-}$, which are reciprocals of each other. Calculating these roots explicitely shows that the larger root $b_{i,j,+}$ is monotonically increasing with $a_{i,j}$ and, accordingly, the smaller root $b_{i,j,-}$ is monotonically decreasing. Let $d=\deg \gamma _2$. Then, if $\deg\gamma_1=\deg \gamma _2-1$ we obtain
\begin{equation}\label{eq:real1}
b_{2,1,-}<b_{1,1,-}<\cdots <b_{1,d-1,-}<b_{2,d,-}<-1<b_{2,d,+}<b_{1,d-1,+}<\cdots < b_{1,1,+}<b_{2,1,+} \, .
\end{equation}
Since $\deg f_2- 2\deg \gamma _2 =\deg f_1- 2\deg \gamma _1 -1$, the multiplicity of the zero $-1$ in $f_2$ is by one smaller than the multiplicity in $f_1$ and thus $f_1\preceq f_2$ follows with \eqref{eq:real1}. In the other case, if $d=\deg \gamma _1=\deg \gamma _2$ we have 
\begin{equation}\label{eq:real2}
b_{2,1,-}<b_{1,1,-}<\cdots <b_{2,d,-}<b_{1,d,-}<-1<b_{1,d,+}<b_{2,d,+}<\cdots < b_{1,1,+}<b_{2,1,+} \, .
\end{equation}
In this case, $\deg f_2- 2\deg \gamma _2 =\deg f_1- 2\deg \gamma _1 +1$, that is, the multiplicity of the zero $-1$ in $f_2$ is by one greater than the multiplicity in $f_1$ and thus $f_1\preceq f_2$ follows with \eqref{eq:real2}.
\end{proof}

\begin{proof}[Proof of Theorem~\ref{thm:main2}]
The proof follows from Proposition~\ref{prop:gammainterlacing} and Lemma~\ref{lem:interlacinggamma} applied to $\gamma _{a,b}(t)$.
\end{proof}

\subsection{Flag simplicial complexes and $\gamma$-polynomials}\label{sec:Nevo}
In~\cite{nevo2011gamma}, Nevo and Petersen conjecture the following. 
\begin{con}[{\cite[Conjecture 6.3]{nevo2011gamma}}]\label{conj:nevo}
The $\gamma$-polynomial of any flag triangulation of a simplicial sphere is the $f$-polynomial of a balanced simplicial complex.
\end{con}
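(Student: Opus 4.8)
The plan is to attack the statement in full generality by reducing it to a numerical criterion and then establishing that criterion. By the theorem of Frankl, F\"uredi and Kalai --- the balanced analogue of the Kruskal--Katona theorem --- a sequence $(g_0,g_1,\dots,g_m)$ of nonnegative integers with $g_0=1$ is the $f$-vector of a balanced simplicial complex of dimension $m-1$ if and only if it obeys the Frankl--F\"uredi--Kalai inequalities, which bound each $g_{i+1}$ above in terms of $g_i$ via a balanced replacement for the $i$-th pseudopower. Thus, writing the $\gamma$-polynomial of a flag triangulation $\mathcal{T}$ of a $(d-1)$-sphere as $\sum_{i=0}^{m}\gamma_i t^i$ with $m\le\lfloor d/2\rfloor$ and $\gamma_0=1$, it suffices to show that $(\gamma_0,\dots,\gamma_m)$ satisfies these inequalities; equivalently --- and this is the route I would pursue --- it suffices to exhibit for each such $\mathcal{T}$ an explicit balanced complex of dimension $m-1$, together with a proper colouring of its vertices by $m$ colours, whose $f$-vector is $(\gamma_0,\dots,\gamma_m)$.

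To construct such a complex I would use the flag hypothesis: $\mathcal{T}$ is the clique complex of its $1$-skeleton $G$. In every case where $\gamma$-positivity of flag spheres is currently understood --- Coxeter complexes, barycentric subdivisions, and certain nestohedra --- one has a combinatorial model writing $\gamma_i$ as the number of ``admissible'' $i$-subsets of edges of $G$ (matchings subject to extra constraints coming from the sphere relations, or chains of distinguished faces), with admissibility closed under passing to subsets, so that these subsets form a genuine simplicial complex $\Gamma_{\mathcal{T}}$ on the edge set of $G$ with $f$-vector $(\gamma_0,\dots,\gamma_m)$. The first step is to isolate such a model for an arbitrary flag sphere. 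The second step is to colour the vertices, i.e.\ the edges of $G$: a proper edge-colouring of $G$ using one more colour than its maximum degree exists by Vizing, but that is hopelessly many; one must instead use the sphere structure to produce a colouring with only $m\le\lfloor d/2\rfloor$ colours that is proper on every admissible edge-set --- for instance by running a sequence of bistellar $0$-moves (edge subdivisions) connecting $\mathcal{T}$ to the boundary of the $d$-dimensional cross-polytope, where $\gamma=(1)$ and everything is trivial, and transporting a balanced edge-model of $\gamma$ backwards through each move.

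The main obstacle is exactly this last step, and it is where the difficulty of the conjecture lives. The weaker assertion $\gamma_i\ge 0$ for all flag spheres is Gal's conjecture, which is open in dimension $\ge 5$, so an unconditional proof of the present statement would in particular settle Gal's conjecture; one should therefore not expect the plan above to close with current technology. A realistic execution is two-tiered. First, on the classes where a nonnegative combinatorial model for $\gamma$ is already available --- Coxeter complexes, barycentric subdivisions, and, relevant to this paper, the flag unimodular triangulations of $\partial P_{K_{a+1,b+1}}$, whose $\gamma$-polynomial is $\sum_i\binom{a}{i}\binom{b}{i}\binom{2i}{i}t^i$ by Theorem~\ref{thm:main1} --- carry out the construction by hand. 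Second, for the general case, attempt the bistellar-flip induction, the bottleneck being to show that the balanced structure and the bound $m\le\lfloor d/2\rfloor$ on the number of colours survive each flip. For the purposes of this paper only the first tier is needed: in Section~\ref{sec:Nevo} one checks directly that $\sum_i\binom{a}{i}\binom{b}{i}\binom{2i}{i}t^i$ is the $f$-polynomial of an explicitly constructed balanced complex, which both verifies Conjecture~\ref{conj:nevo} for these triangulations and leaves the statement in full generality as the open problem.
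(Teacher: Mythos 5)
You have correctly diagnosed the logical status of the statement: it is Nevo--Petersen's Conjecture 6.3, quoted in the paper as an open conjecture, it is strictly stronger than Gal's conjecture (since being an $f$-vector of a balanced complex forces $\gamma_i\ge 0$), and the paper does not claim a proof of it in general --- it only establishes the special case of flag unimodular triangulations of $\partial P_{K_{a+1,b+1}}$. Your general-case programme (Frankl--F\"uredi--Kalai reduction, then transporting a balanced model of $\gamma$ through flag-preserving edge subdivisions down to the cross-polytope) is honestly flagged as out of reach, and indeed the step ``the balanced structure and the colour bound survive each flip'' is exactly where no technology exists; so as a proof of the conjecture as stated, the proposal has an acknowledged, unavoidable gap.

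For the part that \emph{can} be proved --- and which is the actual content of Section~\ref{sec:Nevo} --- your proposal stops at ``one checks directly that $\sum_i\binom{2i}{i}\binom{a}{i}\binom{b}{i}t^i$ is the $f$-polynomial of an explicitly constructed balanced complex'', which is precisely the nontrivial step, and your sketch of how to build it does not match what is needed. You propose a complex of ``admissible edge-subsets'' of the $1$-skeleton of the given triangulation, coloured using the sphere structure; the paper's construction (Proposition~\ref{thm:Nevo}) is instead a purely combinatorial flag complex, independent of the triangulation, on $2ab$ vertices $x_{i,j},y_{i,j}$ ($1\le i\le a\le b$, $1\le j\le b$), with minimal non-faces $\{x_{i,j},x_{i',j'}\}$ and $\{y_{i,j},y_{i',j'}\}$ for $i\le i'$, $j\ge j'$, together with $\{x_{i,j},y_{i',j}\}$ and $\{x_{i,j},y_{i,j'}\}$; it is balanced by colouring both $x_{i,j}$ and $y_{i,j}$ with colour $i$, and its $k$-faces are shown to biject with the ``good'' partial red/green colourings of $A\sqcup B$ having $k$ green elements, whose generating function is $\sum_i\binom{2i}{i}\binom{a}{i}\binom{b}{i}t^i$ by the counting argument of Proposition~\ref{prop:K_ab2} and Theorem~\ref{thm:main1}. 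Without that explicit complex and bijection (or some substitute certificate of the Frankl--F\"uredi--Kalai inequalities for these coefficients), even the special case remains unproven in your write-up; so the missing idea is concretely the construction itself, not the reduction surrounding it.
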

Equivalently, the coefficients of the $\gamma$-polynomial satisfies the so-called Frankl--F\"uredi--Kalai inequalities~\cite{frankl1988shadows}. Towards that conjecture we prove the following:
\begin{theorem}
The $\gamma$-polynomial of any flag unimodular triangulation of $\partial P_{K_{a+1,b+1}}$ is the $f$-polynomial of a flag balanced simplicial complex.
\end{theorem}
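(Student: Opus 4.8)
The plan is to exhibit an explicit flag balanced simplicial complex whose $f$-polynomial is $\gamma_{a,b}(t)=\sum_i \binom{a}{i}\binom{b}{i}\binom{2i}{i}t^i$. Recall from the proof of Theorem~\ref{thm:main1} that $\binom{a}{i}$ counts $i$-subsets of $A$, $\binom{b}{i}$ counts $i$-subsets of $B$, and $\binom{2i}{i}$ is the number of ways to $2$-color a $2i$-element set so that half receives each color — equivalently, the number of lattice paths, or, most usefully here, the number of ways to match the chosen $i$-subset of $A$ with the chosen $i$-subset of $B$ by a \emph{planar} (non-crossing) perfect matching once an extra binary choice per matched pair is recorded. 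So an $i$-face should be the data of: an $i$-subset $S\subseteq A$, an $i$-subset $T\subseteq B$, and one of $\binom{2i}{i}$ decorations; the vertex set is the case $i=1$, namely pairs $(\{v\},\{w\},\varepsilon)$ with $v\in A$, $w\in B$, $\varepsilon\in\{0,1\}$, giving $2ab$ vertices. I would first pin down the bijection underlying $\binom{2i}{i}$ in a way that is manifestly ``local'': the cleanest choice is to view the decoration as a choice, for the chosen $S=\{v_{s_1}<\cdots<v_{s_i}\}$ and $T=\{w_{t_1}<\cdots<w_{t_i}\}$, of a word in $\{U,D\}^{2i}$ with $i$ letters of each type, or equivalently a monotone lattice path, and then translate this into pairwise compatibility conditions on the vertices $(\{v_{s_k}\},\{w_?\},\varepsilon_k)$.

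The core step is to \textbf{define the simplicial complex as a flag complex}, i.e. specify exactly which pairs of vertices are edges, and then verify two things: (a) the $i$-faces (= $i$-cliques) are in bijection with the triples above, so that $f_{i-1}$ of the complex equals the coefficient $\binom{a}{i}\binom{b}{i}\binom{2i}{i}$; and (b) the complex is balanced, i.e. it admits a proper coloring of its vertices by $d+1 = $ (number of parts) colors where $d$ is one more than the maximal clique size minus one, so $\max i = \min(a,b)$ and we need $\min(a,b)$ colors. For balancedness the natural coloring assigns to a vertex $(\{v\},\{w\},\varepsilon)$ a color determined by how $v$ and $w$ sit among the sorted elements — for instance the color could be the ``diagonal index'' that is forced to be distinct across the vertices of any clique because the underlying $A$-coordinates are distinct and the $B$-coordinates are distinct and the planar matching structure pins down a consistent rank. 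I expect the right statement to be: two vertices $(\{v_p\},\{w_q\},\varepsilon)$ and $(\{v_{p'}\},\{w_{q'}\},\varepsilon')$ form an edge iff $p\neq p'$, $q\neq q'$, and the relative order of $v_p,v_{p'}$ versus $w_q,w_{q'}$ together with $(\varepsilon,\varepsilon')$ satisfies a crossing/nesting compatibility that, when imposed on all pairs, forces the whole clique to encode a valid planar matching with decorations. Then flagness is automatic by construction, (a) reduces to a counting lemma about how many decorated planar matchings of two fixed $i$-sets there are — exactly $\binom{2i}{i}$ — and (b) follows by checking the coloring is proper on edges, hence on all faces.

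The main obstacle, and the step I would spend the most care on, is getting the edge relation \emph{exactly right so that $i$-cliques correspond bijectively to $\binom{2i}{i}$-many objects and no more and no fewer} — this is a delicate combinatorial design problem because pairwise conditions must globally ``glue'' into the planar-matching structure without over- or under-counting, and simultaneously the same conditions must be compatible with a proper $\min(a,b)$-coloring witnessing balancedness. A clean way to sidestep ad hoc casework is to realize $\binom{2i}{i}$ as the number of \emph{sequences} $0\le m_1\le m_2\le\cdots\le m_i\le i$ (ballot-type data), or as pairs of compatible non-crossing structures, and then read off the edge relation from ``consecutive compatibility'' of such sequences; alternatively one can use the known fact that $\binom{2i}{i}$ is the $i$-th entry of the $f$-vector of the boundary of the $i$-dimensional cross-polytope's subdivision or of an explicit flag complex (e.g. the clique complex on $2i$ vertices that is the join-like gadget counting balanced binary strings), and take the join over the $\binom{a}{i}\binom{b}{i}$ choices appropriately — but one must then argue that these gadgets for different $(S,T)$ overlap correctly rather than merely taking a disjoint union, since a disjoint union would not be flag of the right dimension. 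I would therefore first write down a small case ($a=b=2$, where $\gamma_{2,2}(t)=1+8t+4t^2$ and we need a flag balanced complex on $8$ vertices with $4$ triangles... wait, $\binom{2}{2}^2\binom{4}{2}=6$, so $f_1=8$, $f_2 = $ no $2$-faces since $\min=2$ gives top dimension $1$; recompute: top $i$ is $2$, $\binom22\binom22\binom42 = 6$, so actually $f$-vector $(1,8,6)$) as a sanity check that the proposed edge relation yields a flag complex with exactly $8$ vertices, $6$ edges, balanced with $2$ colors, and only then state the general construction and prove (a) and (b). The write-up would conclude by invoking that the $\gamma$-polynomial of \emph{any} flag unimodular triangulation of $\partial P_{K_{a+1,b+1}}$ equals $\gamma_{a,b}(t)$ — which holds because all such triangulations share the $h$-polynomial $h^\ast_{a,b}(t)$ by the remarks in Section~\ref{sec:prelim}, and $\gamma$ is determined by $h$ for palindromic $h$ — so the single complex constructed above works uniformly.
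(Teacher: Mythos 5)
Your overall strategy is the paper's strategy: exhibit one explicit flag balanced simplicial complex on $2ab$ vertices (a binary label attached to each pair in $A\times B$) whose $f$-polynomial is $\gamma_{a,b}(t)=\sum_i\binom{2i}{i}\binom{a}{i}\binom{b}{i}t^i$, and then note that all flag unimodular triangulations of $\partial P_{K_{a+1,b+1}}$ share the same $h$-polynomial, hence the same $\gamma$-polynomial. But the proposal stops exactly where the proof has to begin: you never write down the edge relation. You say you ``expect'' the right compatibility condition to be some crossing/nesting rule that makes pairwise constraints glue into a decorated planar matching, and you yourself identify this as the main obstacle and the delicate design problem --- so the central object of the proof (the flag complex, i.e.\ its minimal non-faces) is missing, and with it both the clique--count verification and the proper coloring witnessing balancedness. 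As it stands this is a plan, not a proof.

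Moreover, the planar-matching/decoration model you propose for the factor $\binom{2i}{i}$ makes the gluing problem harder than it needs to be. The paper's resolution avoids matchings altogether: interpret $\binom{2i}{i}$ as the number of ways to split the chosen $i$-set of $A$ and $i$-set of $B$ into ``green'' and ``red'' parts subject to (\#red in $A$)=(\#green in $B$) and (\#green in $A$)=(\#red in $B$) --- the good colorings of Proposition~\ref{prop:K_ab2}. Concretely, take vertices $x_{i,j}$ and $y_{i,j}$ ($1\le i\le a$, $1\le j\le b$; your $\varepsilon$ is the choice of $x$ versus $y$) and declare the minimal non-faces to be: $\{x_{i,j},x_{i',j'}\}$ and $\{y_{i,j},y_{i',j'}\}$ whenever $i\le i'$ and $j\ge j'$, and $\{x_{i,j},y_{i',j}\}$, $\{x_{i,j},y_{i,j'}\}$ for all indices. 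Then the $x$'s (resp.\ $y$'s) of a clique form a strictly increasing sequence in both coordinates, so a $k$-clique is determined by four sets (the first and second coordinates of its $x$'s and of its $y$'s) that are pairwise disjoint within $A$ and within $B$; this is exactly a good coloring with $k$ green elements, giving $f_{k-1}=\binom{2k}{k}\binom{a}{k}\binom{b}{k}$, and coloring every $x_{i,j},y_{i,j}$ with the color $i$ shows balancedness. If you want to salvage your write-up, replacing the undetermined ``crossing/nesting compatibility'' by these explicit non-faces is the step you must supply and verify; without it, your sanity check for $a=b=2$ (where the $f$-vector is indeed $(1,8,6)$) has nothing to check against.
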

\begin{proof}
By Theorem~\ref{thm:main1} the $h^\ast$-polynomial of $P_{K_{a+1,b+1}}$ is $\gamma$-positive with $\gamma$-polynomial $\gamma _{a,b}(t)=\sum _{i\geq 0}{2i \choose i}{a\choose i}{b\choose i}t^i$. Since the $h^\ast$-polynomial equals the $h$-polynomial of any unimodular triangulation of the boundary of  $P_{K_{a+1,b+1}}$, the proof follows from Proposition~\ref{thm:Nevo} below.
\end{proof}

\begin{proposition}\label{thm:Nevo}
For all $a,b\geq 1$ the polynomial
\[
\sum _{i\geq 0}{2i \choose i}{a\choose i}{b\choose i}t^i\]
is the $f$-polynomial of a flag balanced simplicial complex. 
\end{proposition}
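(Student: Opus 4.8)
The plan is to prove the stronger statement directly by exhibiting an explicit flag balanced simplicial complex $\Delta=\Delta_{a,b}$ whose $f$-polynomial equals $\sum_{i\ge 0}\binom{2i}{i}\binom{a}{i}\binom{b}{i}t^i$, rather than verifying the Frankl--F\"uredi--Kalai inequalities abstractly (which in any case would only yield balancedness, not flagness). Fix linear orders on $A=\{1,\dots,a\}$ and $B=\{1,\dots,b\}$ and take as vertex set $V=A\times B\times\{+,-\}$, so that $|V|=2ab$, matching the coefficient of $t$ (and the constant term $1$ of the polynomial corresponds to $f_{-1}=1$). I would declare $\sigma\subseteq V$ to be a face precisely when: (1) the $A$-coordinates of the vertices of $\sigma$ are pairwise distinct; (2) their $B$-coordinates are pairwise distinct; and (3) within each sign class the map (first coordinate)$\mapsto$(second coordinate) is strictly increasing, i.e.\ for vertices $(\alpha_1,\beta_1,\epsilon)$ and $(\alpha_2,\beta_2,\epsilon)$ of the same sign $\epsilon$ one has $\alpha_1<\alpha_2\iff\beta_1<\beta_2$.

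Each of the conditions (1)--(3) is inherited by subsets, so $\Delta$ is a simplicial complex, and each is a condition purely on pairs of vertices, so $\sigma\in\Delta$ as soon as all of its two-element subsets lie in $\Delta$; hence $\Delta$ is flag. For balancedness I would color every vertex by its smaller-side coordinate --- the $A$-coordinate if $a\le b$, otherwise the $B$-coordinate. By (1) (resp.\ (2)) this is a proper coloring of every face, it uses $\min(a,b)$ colors, and since a face has distinct first and distinct second coordinates it has at most $\min(a,b)$ vertices; as such a face clearly exists (take $S\subseteq A$ and $T\subseteq B$ of size $\min(a,b)$ with all vertices of sign $+$ and the unique increasing matching between $S$ and $T$), we get $\dim\Delta=\min(a,b)-1$ and the coloring uses exactly $\dim\Delta+1$ colors. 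Thus $\Delta$ is a flag balanced complex.

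It remains to count faces. A face with $i$ vertices has $k$ vertices of sign $+$ and $j=i-k$ of sign $-$ for some $k$; by (1)--(3) its $+$-part is obtained by choosing a $k$-subset of $A$ and a $k$-subset of $B$ (there being a unique increasing matching between them), after which its $-$-part is obtained by choosing a $j$-subset of the remaining $a-k$ elements of $A$ and a $j$-subset of the remaining $b-k$ elements of $B$. Hence
\[
f_{i-1}(\Delta)\ =\ \sum_{k=0}^{i}\binom{a}{k}\binom{b}{k}\binom{a-k}{i-k}\binom{b-k}{i-k},
\]
and using $\binom{a}{k}\binom{a-k}{i-k}=\binom{a}{i}\binom{i}{k}$ together with the Vandermonde identity $\sum_{k}\binom{i}{k}^2=\binom{2i}{i}$, this equals $\binom{2i}{i}\binom{a}{i}\binom{b}{i}$, as desired. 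The verifications that $\Delta$ is a complex, flag, and balanced are immediate once the model is written down, and the face count is a two-line manipulation; the only real difficulty is to guess the right complex --- in particular, to use \emph{two} copies of $A\times B$ in such a way that splitting a face according to its two sign classes produces the sum $\sum_k\binom{i}{k}^2$ rather than, say, $2^i$ or $i!$. So I expect finding this combinatorial model to be the main obstacle, after which everything is bookkeeping.
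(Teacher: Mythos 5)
Your complex is exactly the paper's (identify $(i,j,+)$ with its $x_{i,j}$ and $(i,j,-)$ with its $y_{i,j}$; the paper's minimal non-faces encode precisely your conditions (1)--(3)), and your flagness, balancedness and dimension arguments match the paper's. The only cosmetic difference is the face count: you evaluate $\sum_k\binom{a}{k}\binom{b}{k}\binom{a-k}{i-k}\binom{b-k}{i-k}$ directly via Vandermonde, whereas the paper counts faces by a bijection with its ``good colorings'' from Proposition~\ref{prop:K_ab2}; both are correct.
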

\begin{proof}
Without loss of generality we may assume that $a\leq b$. Let $X=\{x_{i,j}: 1\leq i\leq a, 1\leq j\leq b\}$, $Y=\{y_{i,j}: 1\leq i\leq a, 1\leq j\leq b \}$ and let $V= X \cup Y$ be the set of vertices of the simplicial complex $\Delta$ that we define by the following set of minimal non-faces:
\begin{itemize}
\item $\{x_{i,j},  x_{i',j'}\}$ for $i\leq i'$ and $j\geq j'$,
\item $\{y_{i,j} ,y_{i',j'}\}$ for $i\leq i'$ and $j\geq j'$, and
\item $\{x_{i,j},y_{i',j}\}$ and $\{x_{i,j},y_{i,j'}\}$ for any $i,j,i',j'$. 
\end{itemize}
By definition, $\Delta$ is a flag simplicial complex, and of dimension $(a-1)$ since $\{x_{1,1},\dots,x_{a,a}\}$ defines a simplex. Assigning to all $x_{i,j}$ and $y_{i,j}$ the color $i$ moreover shows that $\Delta$ is a balanced simplicial complex. 

It remains to prove that the $f$-vector of $\Delta$ is as predicted in the proposition. To see that let $A=\{1,\dots,a\}$ and $B=\{1,\dots, b\}$.
Reminiscent of the proof of Proposition~\ref{prop:K_ab2} we define a partial coloring with colors red and green of the set $A \sqcup B$ to be \textit{good} if there are as many green elements in $A$ as there are red elements in $B$ and at the same time there are as many green elements in $B$ as there are red elements in $A$.
Just as in Proposition \ref{prop:K_ab2} we see that the polynomial we are interested in is of the form
$$\sum_{\text{good }c}t^{g(c)} \ = \ \sum _{i\geq 0}{2i \choose i}{a\choose i}{b\choose i}t^i \, $$
where the sum is over all good colorings $c$ and $g(c)$ is the number of green elements. To finish the proof we establish a bijection between simplices in $\Delta$ with $k$ vertices and good colorings of $A\sqcup B$ with $k$ green elements.

For the first direction, for every simplex $\sigma$ of $\Delta$ let
\[
X_A (\sigma)=\{i\colon x_{i,j}\in \sigma\} \, , X_B (\sigma)=\{j\colon x_{i,j}\in \sigma\} \, , Y_A (\sigma)=\{i\colon y_{i,j}\in \sigma\} \, , Y_B (\sigma)=\{j\colon y_{i,j}\in \sigma\} \, .
\]
Then, by definition of $\Delta$, 
\[
X_A(\sigma)\cap Y_A(\sigma)=\emptyset \text{ and } X_B(\sigma)\cap Y_B(\sigma)=\emptyset \, .
\]
We define a coloring of $A\sqcup B$ by setting $X_A\subseteq A$ to be the green elements and $Y_A\subseteq A$ to red elements in $A$, and $X_B\subseteq B$ the red elements and $Y_B\subseteq B$ the green elements in $B$. Since every $x_{i,j}$ and $y_{i',j'}$ in $\sigma$ contributes to precisely one green element and one red element, this defines a good coloring and $|\sigma|$ equals the total number of green elements in $A\sqcup B$.  

For the inverse map, consider a good coloring of $A\sqcup B$ with $k$ green elements and let
\begin{itemize}
\item $i_1< i_2< \dots< i_s$ be the green elements of $A$; 
\item $j_1<j_2<\dots<j_t$ be the red elements of $B$; 
\item $i_1'<\dots<i_{k-s}'$ be the red elements of $A$; 
\item $j_1'<\dots<j_{k-t}'$ be the green elements of $B$. 
\end{itemize}
Since the coloring is good, we have $s=t$ and we may associate a simplex with vertices $x_{i_1,j_1}, x_{i_2,j_2},\dots,x_{i_s,j_s}$ and $y_{i_1',j_1'},\dots,y_{i_{k-s}',j_{k-s}'}$. By definition, this simplex belongs to $\Delta$, has $k$ vertices and both maps are easily seen to be inverses of each other, which finishes the proof.
\end{proof}

\subsection{A recursive formula}\label{sec:recursion}
In~\cite{Higashitani2017interlacing} recursive formulas for $h^\ast_{a,b}$ were given for any fixed $a\leq 2$. These formulas played a fundamental role in the study of the roots of the Ehrhart polynomial of $P_{K_{a+1,b+1}}$.  Here we present a general formula for arbitrary $a$ and $b$.
\begin{proposition}\label{prop:recur}
For all $a,b\geq 1$
$$(b-a)h^*_{a,b}=(1+t)(bh^*_{a,b-1}-ah^*_{a-1,b}) \, .$$
\end{proposition}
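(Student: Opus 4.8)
The plan is to reduce the claimed recursion to an identity for the $\gamma$-polynomials of the $h^\ast_{a,b}$ and then to verify that identity by a one-line comparison of coefficients.

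First I would translate everything into $\gamma$-polynomials. By Theorem~\ref{thm:main1},
\[
h^\ast_{a,b}(t) \ = \ \sum_{i\ge 0}\binom{2i}{i}\binom{a}{i}\binom{b}{i}t^i(1+t)^{a+b+1-2i},
\]
so, setting $\gamma_{a,b}(t):=\sum_{i\ge 0}\binom{2i}{i}\binom{a}{i}\binom{b}{i}t^i$, we have $h^\ast_{a,b}(t)=(1+t)^{a+b+1}\gamma_{a,b}\!\left(\tfrac{t}{(1+t)^2}\right)$. Since $h^\ast_{a,b-1}$ and $h^\ast_{a-1,b}$ both have degree $a+b$, the factor $1+t$ makes all three terms of the proposed identity equal to $(1+t)^{a+b+1}$ times a polynomial in $u:=t/(1+t)^2$. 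Hence Proposition~\ref{prop:recur} is implied by (and in fact equivalent to) the polynomial identity
\[
(b-a)\,\gamma_{a,b}(u) \ = \ b\,\gamma_{a,b-1}(u) \ - \ a\,\gamma_{a-1,b}(u),
\]
which, once established, gives Proposition~\ref{prop:recur} upon substituting $u=t/(1+t)^2$ and multiplying back by $(1+t)^{a+b+1}$.

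Next I would prove this $\gamma$-identity by comparing coefficients of $u^i$. After cancelling the common factor $\binom{2i}{i}$, the required equality is
\[
(b-a)\binom{a}{i}\binom{b}{i} \ = \ b\binom{a}{i}\binom{b-1}{i}-a\binom{a-1}{i}\binom{b}{i}.
\]
Using the elementary identities $b\binom{b-1}{i}=(b-i)\binom{b}{i}$ and $a\binom{a-1}{i}=(a-i)\binom{a}{i}$, valid for all $i\ge 0$ with the convention $\binom{n}{i}=0$ for $i>n$, the right-hand side becomes $\big((b-i)-(a-i)\big)\binom{a}{i}\binom{b}{i}=(b-a)\binom{a}{i}\binom{b}{i}$, which is the left-hand side.

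There is essentially no obstacle here; the only points deserving a word of care are that passing between the $h^\ast$-identity and the $\gamma$-identity is legitimate (all three terms are genuinely polynomials divisible by the stated power of $1+t$, so dividing out introduces no spurious poles, and $u=t/(1+t)^2$ is non-constant so a polynomial identity in $u$ is equivalent to the identity after substitution), and that the coefficientwise check must be read with the zero-convention for binomial coefficients, in particular at the boundary indices $i=a$ and $i=b$ where one term on the right vanishes. One could alternatively prove Proposition~\ref{prop:recur} directly from the double sum in Theorem~\ref{thm:main1}, but routing through $\gamma_{a,b}$ keeps the bookkeeping minimal.
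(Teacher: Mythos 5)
Your proof is correct and follows essentially the same route as the paper: both plug in the closed formula from Theorem~\ref{thm:main1} and reduce the recursion to an elementary coefficientwise binomial identity (your absorption identity $b\binom{b-1}{i}=(b-i)\binom{b}{i}$ is just a rearrangement of the paper's $b\bigl(\binom{b}{i}-\binom{b-1}{i}\bigr)=i\binom{b}{i}$). The only cosmetic difference is that you factor out $(1+t)^{a+b+1}$ and phrase the check in terms of $\gamma_{a,b}$, whereas the paper shows that $b\bigl(h^*_{a,b}-(1+t)h^*_{a,b-1}\bigr)$ and $a\bigl(h^*_{a,b}-(1+t)h^*_{a-1,b}\bigr)$ both equal the same symmetric sum.
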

\begin{proof}
By Theorem~\ref{thm:main1},
$$b\left(h^*_{a,b}-(1+t)h^*_{a,b-1}\right)=\sum _{i\geq 0}{{2i}\choose i}{a\choose i}t^i(1+t)^{a+b+1-2i}b\left({b\choose i}-{{b-1}\choose i}\right).$$
As $b\left({b\choose i}-{{b-1}\choose i}\right)=i{b\choose i}$ the above polynomial equals
$$\sum_{i\geq 0} i{{2i}\choose i}{a\choose i}{b\choose i}t^i(1+t)^{a+b+1-2i}.$$
However, by the same argument, this is also equal to
$$a\left(h^*_{a,b}-(1+t)h^*_{a-1,b}\right) \, ,$$
which proves the proposition.
\end{proof}

\textbf{Acknowledgements:} The authors would like to thank the Mathematisches Forschungsinstitut Oberwolfach for hosting the Mini-Workshop
``Lattice polytopes:
methods, advances and applications''
in fall 2017 during which this project evolved.
They also would like to thank Petter Br\"anden, Christian Haase and Eran Nevo for helpful comments. 
Akihiro Higashitani was partially supported by JSPS Grant-in-Aid for Young Scientists (B) $\sharp$17K14177. 
Katharina Jochemko was supported by the Knut and Alice Wallenberg foundation. 
Mateusz Michałek was supported by the Polish National Science Centre grant no. 2015/19/D/ST1/01180. 


 
\bibliographystyle{plain}
\bibliography{Xbib}



\end{document}